\declaretheorem[numberwithin=section,refname={Theorem,Theorems},Refname={Theorem,Theorems}, name=Theorem]{theorem}
\declaretheorem[sibling=theorem,style=remark,refname={Remark,Remarks}, Refname={Remark,Remarks},name=Remark]{remark}
\declaretheorem[sibling=theorem,refname={Proposition,Proposition},Refname={Proposition,Propositions},name=Proposition]{proposition}
\declaretheorem[sibling=theorem, name=Lemma, refname={Lemma,Lemmas},Refname={Lemma,Lemmas}]{lemma}
\declaretheorem[sibling=theorem,refname={Corollary,Corollaries},  Refname={Corollary,Corollaries},name=Corollary]{corollary}
\declaretheorem[sibling=theorem,style=definition,refname={Definition,Definitions},  Refname={Definition,Definitions},name=Definition]{definition}
\declaretheorem[sibling=theorem,style=definition,refname={Example,Examples},  Refname={Example,Examples},name=Example]{example}
\numberwithin{equation}{section}
\crefname{appendix}{Appendix}{Appendices}
\renewcommand{\leq}{\leqslant}
\renewcommand{\geq}{\geqslant}
\newcommand{\seq}{\subseteq}
\newcommand{\df}{\coloneqq}
\newcommand{\eps}{\varepsilon}
\newcommand{\succaux}[3]{\ensuremath{\langle#1\mid{#2\in#3}\rangle}}
\newcommandx*{\succustom}[3][2=n, 3 = \Np]{\succaux{#1}{#2}{#3}}
\newcommand{\Set}{\mathsf{Set}}
\newcommand{\Gr}{\mathsf{G}} 
\newcommand{\cG}{\overline{\Gr}} 
\newcommand{\KH}{\mathsf{KH}} 
\newcommand{\Kz}{\mathsf{K_a}} 
\newcommand{\KHz}{\mathsf{KH_a}}
\newcommand{\KHzop}{\mathsf{KH}_{\mathsf{a}}^\op}
\newcommand{\T}{{\mathsf{Top}}} 
\newcommand{\Tz}{{\mathsf{Top_a}}} 
\newcommand{\Alg}{\mathsf{Alg}_\tau}
\DeclareMathOperator{\Max}{Max}
\DeclareMathOperator{\C}{C_a}
\DeclareMathOperator{\Cc}{C}
\newcommand{\R}{\mathbb{R}}
\newcommand{\Q}{\mathbb{Q}}
\newcommand{\Z}{\mathbb{Z}}
\newcommand{\Zmod}[1][n]{\frac{1}{#1}\Z}
\newcommand{\N}{\mathbb{N}}
\newcommand{\Np}{\mathbb{N}^{+}}
\newcommand{\ULambda}{\rotatebox[origin=c]{180}{\ensuremath{\Lambda}}}
\DeclareMathOperator{\den}{den}
\DeclareMathOperator{\lcm}{lcm}
\DeclareMathOperator{\dist}{\mathtt{d}_{\ell}}
\newcommand{\lnorm}[1]{\lVert#1\rVert_{\ell}}
\newcommand{\lm}[1]{#1}
\DeclareMathOperator{\ev}{ev}
\DeclareMathOperator{\DIV}{div} 
\DeclareMathOperator{\ADC}{\mathtt{Z}} 
\newcommand{\AC}[1]{\wideparen{#1}} 
\newcommand{\down}[1]{#1^{\vartriangleleft}}
\newcommand{\up}[1]{#1^{\vartriangleright}}
\newcommand{\op}{\mathrm{op}}
\newcommand{\ua}{\mathord{\uparrow}}
\begin{document}
\begin{frontmatter}
\journal{arXiv}
\title{Stone-Gelfand duality for metrically complete lattice-ordered groups}
\author[add1]{Marco~Abbadini}
\ead{marco.abbadini.uni@gmail.it}
\author[add2]{Vincenzo~Marra\corref{cor}}
\ead{vincenzo.marra@unimi.it}
\author[add3]{Luca~Spada}
\ead{lspada@unisa.it}
\address[add1]{School of Computer Science,
University of Birmingham,
B15 2TT Birmingham, United Kingdom}
\address[add2]{Dipartimento di Matematica ``Federigo Enriques'', Universit\`a degli Studi di Milano, via Cesare Saldini 50, 20133 Milano, Italy}
\address[add3]{Dipartimento di Matematica, Universit\`a degli Studi di Salerno, Piazza Renato Caccioppoli, 2, 84084 Fisciano (SA), Italy}
\date{}

\cortext[cor]{Corresponding author}

\begin{abstract} 
    We extend Yosida's 1941 version of Stone-Gelfand duality to metrically complete unital lattice-ordered groups that are no longer required to be real vector spaces.  This calls for a generalised notion of compact Hausdorff space whose points carry an arithmetic character to be preserved by continuous maps.  The arithmetic character of a point is (the complete isomorphism invariant of) a metrically complete additive subgroup of the real numbers containing $1$---namely, either $\frac{1}{n}\mathbb{Z}$ for an integer $n = 1, 2, \dots$, or the whole of $\mathbb{R}$.  The main result needed to establish the extended duality theorem is a substantial generalisation of Urysohn's Lemma to such ``arithmetic'' compact Hausdorff spaces.  The original duality is obtained by considering the full subcategory of spaces every point of which is assigned the entire group of real numbers.  In the Introduction we indicate motivations from and connections with the theory of dimension groups.
\end{abstract}
\begin{keyword}Stone-Gelfand duality \sep Lattice-ordered group \sep Compact Hausdorff space \sep Normal space \sep Urysohn's Lemma \sep Tychonoff cube.

\MSC[2020]{Primary: 06F20. Secondary: 54A05, 54C30.}
\end{keyword}
\end{frontmatter}

\section{Introduction} \label{s:intro}
Let us write $\KH$ for the category of compact Hausdorff spaces and continuous maps. 
In the Thirties and Forties of the last century a number of mathematicians realised that the opposite category $\KH^{\op}$ can be represented, to within an equivalence, in several useful ways that variously relate to known mathematical structures of relevance in functional analysis. 
It is common to refer to this conglomerate of results by the umbrella term \textit{Stone-Gelfand Duality}.  
To provide a historically accurate picture of these theorems and of their mutual relationships would require far more space than we can afford here. 
Let us mention in passing that the authors involved include at least Gelfand and Kolmogorov \cite{GelfandKolmogoroff1939}, Gelfand and Neumark \cite{GelNeu1943}, the Krein brothers \cite{KreinKrein1940}, Kakutani \cite{Kakutani1941}, Yosida \cite{Yosida1941}, and Stone \cite{Stone1941}; and that the mathematical structures used in these representation theorems include at least rings of continuous functions, commutative $\mathrm{C}^*$-algebras, Kakutani's (M)-spaces, vector lattices (\textit{alias} Riesz spaces), and divisible Abelian lattice-groups.
In the present paper the more relevant structures are the ones used by Yosida, as follows.

Let us write $\mathsf{V}$ for the category whose objects are unital vector lattices---that is, lattice-ordered real vector spaces equipped with a distinguished unit---and whose morphisms are the linear lattice homomorphisms preserving the distinguished units. We recall that an element $1$ of the vector lattice $V$ is a \emph{\textup{(}strong order\textup{)} unit} if for any $v\in V$ there is a positive integer $p$ such that $v\leq p$. (In the latter expression, $p$ stands for $p\cdot 1$, and we shall use such shorthands without further warning.) Then $V$ can be equipped with a pseudometric induced by its unit $1$ by setting, for $v,w\in V$,
\begin{equation}\tag{*}\label{eq:metric}
\lVert v-w\rVert \coloneqq \inf{\left\{\lambda\in \mathbb{R} \mid \lambda \geq 0 \text{ and } \lambda \geq \lvert v-w \rvert \right\}},
\end{equation}
where $\lvert x \rvert \coloneqq x \vee (-x)$ is the absolute value of $x \in V$.  This pseudometric is a metric just when the vector lattice is Archimedean, i.e., if for all $x,y \in V$, whenever $px\leq y$ for all positive integers $p$, then $x \leq 0$. A unital vector lattice $V$ is then \emph{metrically complete} if the pseudometric induced by the unit $1$ is a metric, and $V$ is (Cauchy) complete in this metric. Thus, for instance, the pseudometric induced on the vector lattice $\Cc(X)$ of real-valued continuous functions on a compact Hausdorff space $X$ by the unit given by the function $X \to \mathbb{R}$ constantly equal to $1$ is the uniform metric, and $\Cc(X)$ is of course metrically complete.

Now, there is an adjunction between $\mathsf{V}$ and $\KH^{\op}$ that takes a unital vector lattice $V$ to its maximal spectral space, that is, its set of maximal (vector-lattice) ideals topologised by the hull-kernel topology; and a compact Hausdorff space $X$ to the unital vector lattice $\Cc(X)$. The adjunction descends to an equivalence between the full subcategory of $\mathsf{V}$ on the metrically complete vector lattices, and $\KH^{\op}$. This is the version of Stone-Gelfand Duality obtained by Yosida.

From a first perspective, in this paper we pursue the extension of Yosida's version of Stone-Gelfand Duality obtained by relaxing the hypothesis that $V$ be a real linear space, allowing $V$ to be instead a lattice-ordered Abelian group. Let us recall that a \emph{lattice-ordered group} (\emph{$\ell$-group}, for short) is a group endowed with a lattice structure such that the group operation distributes over both lattice operations; for background, please see \cite{MR0552653}.
The notions of unit and of pseudometric induced by the unit are defined for Abelian $\ell$-groups as in the vector lattice case---in order to use only the $\mathbb{Z}$-module structure of the underlying Abelian group in defining the pseudometric, replace the right-hand side of \eqref{eq:metric} by 
\[\tag{$\dagger$}\label{eq:metricbis}
\inf{\left\{\frac{p}{q}\in \mathbb{Q} \mid p\geq 0, q>0, \text{ and } p \geq q \lvert v-w \rvert \right\}}.
\]
Archimedean unital $\ell$-groups are defined as in the vector-lattice case; it is a standard result that Archimedean $\ell$-groups are Abelian \cite[Théorème 11.1.3]{MR0552653}, and it is again true that the pseudometric is a metric precisely when the Archimedean condition holds. A unital $\ell$-group is \emph{metrically complete} if it is Archimedean and complete in the metric induced by its unit. We abbreviate `unital lattice-ordered Archimedean (hence Abelian) group complete in the metric induced by its unit' by \emph{metrically complete $\ell$-group}. We thus have the category $\Gr$ of unital Abelian $\ell$-groups and lattice-group homomorphisms preserving the distinguished unit; and a full subcategory $\cG$ on those objects which are metrically complete. 

Then the question arises, \emph{what should replace the category $\KH$ in order to regain a dual equivalence with $\cG$}. 

To obtain some insight, consider the unit interval $[0,1] \subseteq \mathbb{R}$ and the $\ell$-group $\nabla$ of continuous piecewise-affine functions $[0,1]\to \mathbb{R}$ such that each affine piece $a(x)\coloneqq z_1x+z_2$ has integer coefficients $z_1,z_2 \in \mathbb{Z}$, with unit the function $[0,1]\to \mathbb{R}$ constantly equal to $1$. Then $\nabla$ is a subobject (i.e., a unital sublattice subgroup, known as unital $\ell$-subgroup) in $\Gr$ of $\Cc([0,1])$. The $\ell$-group $\nabla$ is not metrically complete. To compute its completion, let first $A_{x}\coloneqq\{z_1x+z_2\mid z_1,z_2\in\mathbb{Z}\}$ be the set of values attained by elements of $\nabla$ at the point $x\in[0,1]$. Next observe that  $A_{x}$ is a dense subgroup of $\mathbb{R}$ if $x$ is an irrational number, but it is the discrete subgroup $\frac{1}{d}\mathbb{Z}\coloneqq\{\frac{z}{d}\mid z\in \mathbb{Z}\}$ if $x$ is a rational number of denominator $d$. It follows that the metric completion of $\nabla$ is the subobject $\overline{\nabla}$ of $\Cc([0,1])$ consisting of those functions that, at each rational point of $[0,1]$ of denominator $d$, are constrained to take values in the subgroup $\frac{1}{d}\mathbb{Z}$.

The maximal spectral space of $\Cc([0,1])$---that is, its Stone-Gelfand-Yosida dual space---is just $[0,1]$. However, this is also the maximal spectral space of both $\nabla$ and $\overline{\nabla}$, because the functions in $\nabla$ separate points. In order to discern $\Cc([0,1])$ from $\overline{\nabla}$, their maximal spectral space $[0,1]$ can be enriched by information that specifies, for each $x\in [0,1]$, which subgroup of $\mathbb{R}$ the functions in the dual $\ell$-group can take values in. This can be summarised by the map $\den \colon [0,1]\to \mathbb{N}$ which assigns to a rational number $x\in [0,1]$ its denominator $d\in\mathbb{N}$, meaning that the set of possible values of functions at $x$ is $\frac{1}{d}\mathbb{Z}$; and to an irrational number $x\in [0,1]$ the value $0$, meaning that, at $x$, the values of the functions can range in the whole group $\mathbb{R}$. Here, the choice of $0$ is natural rather than conventional: it will transpire that a morphism, say, $\overline{\nabla}\to\overline{\nabla}$, contravariantly induces a continuous map $f\colon [0,1]\to [0,1]$ which \emph{decreases denominators} in the sense that it carries denominators to their divisors, i.e., for $x\in [0,1]$, $\den{f(x)}$ divides $\den{x}$; thus, there is no constraint on where such a map may send a point of denominator $0$.

What characteristic properties does the assignment $\den \colon [0,1]\to \mathbb{N}$ satisfy? We just pointed out that the dual continuous maps decrease denominators, and so the divisibility order of $\N$ is relevant; further inspection confirms that $\den \colon [0,1]\to \mathbb{N}$ is continuous when its codomain $\N$ is equipped with the \emph{upper topology} for its divisibility order---the topology whose closed sets are precisely the finite downsets, along with $\N$ itself. (Please see Section \ref{s:a.normal} for more details on the upper topology.) 

Abstracting from this example with the benefit of considerable hindsight, we define an \emph{arithmetic space} (\emph{a-space}, for short) to be a topological space equipped with a  \emph{denominator map}, a function to $\N$ that is continuous with respect to the upper topology induced by the divisibility order of the natural numbers. Morphisms between a-spaces are continuous maps that decrease denominators with respect to the divisibility order; we call them \emph{a-maps}. Fundamental examples of a-maps are (restrictions of) affine maps between finite-dimensional real vector spaces which descend to affine maps between the $\Z$-submodules of lattice (i.e., integer-coordinate) points with respect to chosen bases.

Now, it is clear that not every a-space can arise as the dual object of a metrically complete $\ell$-group; at the very least, the underlying space ought to be compact and Hausdorff. In Definition \Cref{d:anormal-space} we are able to identify the needed subclass of arithmetic spaces, which we call \emph{arithmetically normal}; as we shall prove, they provide the appropriate substitute for $\KH$. In other words, we prove the following generalisation of Yosida duality: The category of metrically complete $\ell$-groups is dually equivalent to the category of arithmetically normal spaces and a-maps between them.

Before proceeding, we should emphasise that the category of a-spaces admits a more conceptual presentation through lax comma categories; we provide some details on this intriguing connection in Remark \ref{r:lax} below.

\smallskip
From a second perspective, our main result is motivated by, and relates to, the theory of approximately finite-dimensional (AF) $\mathrm{C}^*$-algebras \cite{Effros} and their ordered $\mathrm{K}_0$ groups, known as dimension groups \cite{Goodbook}. In the remarks that follow, $\mathrm{C}^*$-algebras and dimension groups are always assumed to be unital. Elliott \cite{Elliott} showed dimension groups to be complete isomorphism invariants of AF $\mathrm{C}^*$-algebras. Elliott's result generalised previous well-known work by Glimm (1960) and Dixmier (1967), in turn rooted in the classical Murray-von Neumann approach to the classification of factors. Incidentally, we may now point out the significant conceptual differences between the two perspectives we are discussing in this Introduction. Let $X$ be a compact Hausdorff space. In the first perspective, elements of $\Cc(X)$ arise as observables of a classical---i.e., a commutative $\mathrm{C}^*$-algebra---system. In the second perspective, they arise as Murray-von Neumann ``dimensions'' of a sufficiently tame---i.e., an AF $\mathrm{C}^*$-algebra with lattice-ordered $\mathrm{K}_0$---non-classical system.

Effros, Handelman, and Shen proved (\cite[Theorem 2.2]{EHS}, see also \cite[Chapter 3]{Goodbook}) that dimension groups are exactly the directed, isolated---also known as ``unperforated''---unital partially ordered Abelian groups satisfying the Riesz interpolation property; any unital Abelian $\ell$-group is such. Goodearl and Handelman offered in \cite{GH} a systematic investigation of completions of a unital directed partially ordered Abelian groups with respect to the pseudometric induced on it by a state, i.e., a normalised positive group homomorphism to the reals. Within the theory they developed, Goodearl and Handelman obtained the following representation theorem for metrically complete unital Abelian $\ell$-groups.\footnote{As a terminological aside, we remark that Goodearl and Handelman called ``norm-complete'' those unital Archimedean $\ell$-groups which are complete in the metric \eqref{eq:metricbis} induced by their unit. Here, we quote their theorem using the terminology we introduced above, which adopts ``metrically complete'' rather than ``norm-complete''. Apart from this and from minor differences in notation, we quote \textit{verbatim}.}

\begin{theorem}[{Goodearl and Handelman, \cite[Theorem 5.5]{GH}}]\label{th:gh}Let $X$ be a compact Hausdorff space. For each $x \in X$, choose a subgroup $A_x$ of $\R$, so that $A_x\coloneqq\R$ or $A_x\coloneqq \frac{1}{n}\Z$ for some positive integer $n$. Set
\[
B\coloneqq\{p\in \Cc(X)\mid p(x) \in A_x \text{ for each } x \in X\},
\]
and give $B$ the order inherited from $\Cc(X)$. Then $B$ is an Archimedean metrically complete lattice-ordered Abelian group with unit. Conversely, any such group is isomorphic to one of this form.
\end{theorem}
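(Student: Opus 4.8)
The claim has two halves: first, that the displayed group $B$ is always an Archimedean metrically complete unital $\ell$-group; second, that every such group arises in this form. The first half I would dispatch by direct verification. The constant $1$ lies in $B$ since $1\in A_x$ for all $x$; $B$ is a subgroup because each $A_x$ is one; and $B$ is a sublattice because $(p\vee q)(x)$ and $(p\wedge q)(x)$ belong to $\{p(x),q(x)\}\seq A_x$. The constant $1$ is a strong order unit since each member of $\Cc(X)$ is bounded on the compact space $X$, and Archimedeanness is inherited from $\Cc(X)$; thus the unit-pseudometric on $B$ is a genuine metric, namely the restriction of the uniform metric $\lVert\cdot\rVert_\infty$. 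Finally $B$ is closed in $\Cc(X)$: if $p_n\to p$ uniformly with $p_n\in B$, then $p(x)=\lim_n p_n(x)\in A_x$ because $A_x$, being $\R$ or $\frac1n\Z$, is closed in $\R$; being a closed subspace of the complete space $\Cc(X)$, $B$ is complete.

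For the converse, let $G$ be a metrically complete unital $\ell$-group with unit $u$ and put $X\df\Max G$, the set of maximal $\ell$-ideals topologised by the hull-kernel topology. For $\mathfrak m\in X$ the quotient $G/\mathfrak m$ is a nonzero Archimedean totally ordered group, so by H\"older's theorem \cite{MR0552653} it embeds uniquely into $\R$ carrying $u+\mathfrak m$ to $1$; write $\hat g(\mathfrak m)\in\R$ for the image of $g$. In the standard way one checks that each $\hat g$ is continuous, that $X$ is compact Hausdorff (identify $X$ with the closed set of ``$\ell$-characters'' inside $\prod_{g\in G}[-\lVert g\rVert,\lVert g\rVert]$), and that $\hat\cdot\colon G\to\Cc(X)$ is a unital $\ell$-homomorphism. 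Two facts about Archimedean unital $\ell$-groups are needed. First, the intersection of all maximal $\ell$-ideals is $\{0\}$: for $g\ne0$ one of $g^{+},g^{-}$ is nonzero, and (by Zorn) an $\ell$-ideal maximal among those avoiding it is in fact a maximal $\ell$-ideal, at which $\hat g$ does not vanish; hence $\hat\cdot$ is injective. Second, $\lVert\hat g\rVert_\infty=\lVert g\rVert$ for every $g$: the inequality $\leq$ is immediate from the definition of $\lVert\cdot\rVert$, and for $\geq$ one applies the first fact to $q\lvert g\rvert-pu$ for each rational $p/q<\lVert g\rVert$. So $\hat\cdot$ is an isometric unital $\ell$-embedding, and, $G$ being complete, its image $\hat G$ is closed in $\Cc(X)$.

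Now set $A_x\df\{\hat g(x)\mid g\in G\}$, the image of $G$ in $G/\mathfrak m_x\seq\R$; it is a subgroup of $\R$ containing $1=\hat u(x)$. Here metric completeness of $G$ enters decisively: one shows that every maximal $\ell$-ideal $\mathfrak m$ is closed (if $u$ were a limit point of $\mathfrak m$, some $m\in\mathfrak m$ would satisfy $2m\geq u$, whence $|h|\leq ku\leq 2km\in\mathfrak m$ for a suitable integer $k$ and every $h$, forcing $\mathfrak m=G$) and that $G/\mathfrak m$ with its unit-metric --- which coincides with the quotient metric --- is again metrically complete; consequently $A_x$ is a complete, hence closed, subgroup of $\R$ containing $1$, so $A_x=\R$ or $A_x=\frac1n\Z$ for some $n\in\Np$. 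By construction $\hat G\seq B$, and we have seen both are closed in $\Cc(X)$, so it remains to prove that $\hat G$ is dense in $B$. For this I would use the classical Kakutani lattice argument: given $b\in B$ and $\eps>0$, it suffices to produce, for every pair $x,y\in X$, some $g_{xy}\in G$ with $\hat g_{xy}(x)=b(x)$ and $\hat g_{xy}(y)=b(y)$; two passes through finite subcovers --- taking first infima, then suprema --- then yield $g\in G$ with $\lVert\hat g-b\rVert_\infty\leq\eps$. When $x=y$ this is exactly the statement $b(x)\in A_x$. When $x\ne y$, the maximal ideals $\mathfrak m_x,\mathfrak m_y$ are incomparable, so we may pick $w\in\mathfrak m_x\setminus\mathfrak m_y$ and $w'\in\mathfrak m_y\setminus\mathfrak m_x$; replacing $w,w'$ by $\lvert w\rvert,\lvert w'\rvert$ we may assume $w,w'\geq0$, $\hat w(x)=0<\hat w(y)$, and $\hat{w'}(y)=0<\hat{w'}(x)$. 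Lifting $b(y)$ to some $g_2\in G$ and clamping, $e\df(g_2\vee(-Nw))\wedge(Nw)$ satisfies $\hat e(x)=0$ and $\hat e(y)=b(y)$ once $N\in\Np$ is large; symmetrically one builds $e'$ with $\hat{e'}(y)=0$, $\hat{e'}(x)=b(x)$, and $g_{xy}\df e+e'$ does the job. Therefore $\hat G=B$, and $\hat\cdot$ is the desired isomorphism onto $B$ with its inherited order.

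I expect the main obstacle to be the step that $A_x$ is closed in $\R$. This is the only place where metric completeness of $G$ is used in an essential way, and it rests on two slightly delicate points: that the maximal $\ell$-ideals of $G$ are closed, and that the unit-metric of the quotient $G/\mathfrak m$ coincides with the quotient metric (so that completeness passes to the quotient, and thence to $A_x$). Everything else is either standard $\ell$-group theory --- H\"older's theorem, compactness of $\Max G$, the isometric representation --- or, as with the two-point interpolation above, a routine lattice manipulation once the right ``bump'' elements are in hand.
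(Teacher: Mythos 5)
Your statement is one the paper deliberately does \emph{not} prove: it is quoted from Goodearl and Handelman \cite[Theorem 5.5]{GH}, and the introduction stresses that the duality of the paper is obtained ``without assuming it''. The closest thing to an internal proof is the combination of \Cref{t:fixedalgebras}, \Cref{t:StWe_a-spaces} and \Cref{p:den_and_closure}, which together re-derive the theorem. Measured against that route, your architecture is genuinely different and is essentially Goodearl--Handelman's original one: represent $G$ on $X=\Max G$ via H\"older, prove the representation is an isometric embedding with closed image, show that each fibre group $A_x$ is closed in $\R$ (hence equal to $\R$ or $\frac{1}{n}\Z$), and finish with the Kakutani two-point-interpolation form of Stone--Weierstrass. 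Your first half (that $B$ is a metrically complete unital $\ell$-group) and your density argument are correct and match what the paper does in \Cref{t:StWe-figo-2-options}--\Cref{t:StWe_a-spaces}.

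The gap is exactly where you predicted it, and it is more serious than ``slightly delicate''. Your closedness of $A_x$ rests on the identity
\[
\inf_{h\in\mathfrak m_x}\lVert g+h\rVert \;=\; \lvert \hat g(x)\rvert ,
\]
i.e.\ on the coincidence of the quotient metric with the unit metric of $G/\mathfrak m_x$. The inequality $\geq$ is trivial, but the inequality $\leq$ asks for a representative of $g+\mathfrak m_x$ of norm close to $\lvert\hat g(x)\rvert$, and the natural candidate --- the truncation $\bigl(g\vee(-\tfrac{p}{q}u)\bigr)\wedge\tfrac{p}{q}u$ --- is not an element of $G$, because an $\ell$-group is not divisible; one can form $(qg\vee(-pu))\wedge pu$, but one cannot divide the result by $q$. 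Without this identity, completeness of $G/\mathfrak m_x$ in the quotient metric (which is indeed standard for closed subgroups of complete metric groups) does not transfer to completeness of $A_x$ under $\lvert\cdot\rvert$, and the whole converse collapses. In \cite{GH} this is precisely where the interpolation-theoretic machinery (Riesz decomposition, the duality between quotient norms and states vanishing on an ideal) earns its keep, so you would need to import or reprove those results. The paper's route sidesteps the issue entirely: it never shows $A_x$ is closed beforehand, but instead \emph{defines} the target algebra $\C(\Max G)$ through $\zeta(x)=\lcm\{\den(g(x))\mid g\in G\}$, observes via \Cref{p:den_and_closure} that the closure of $A_x$ is $\{r\in\R\mid \den r \text{ divides } \zeta(x)\}$ whether or not $A_x$ is closed, proves density of $G$ in $\C(\Max G)$ by Stone's lattice theorem, and invokes metric completeness of $G$ exactly once at the very end. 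If you want a self-contained proof, I recommend restructuring along those lines rather than trying to patch the quotient-norm step.
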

\noindent We retain the notation of \cref{th:gh} in the remarks that follow. The maximal spectrum $Y$ of $B$ is always a continuous image of $X$, but $X$ and $Y$ may not be homeomorphic because $B$ may fail to separate the points of $X$. For instance, take $X=[0,1]$ and $A_x = \Z$ for each $x \in [0,1]$. Then $B$ is the set of constant functions from $[0,1]$ to $\Z$; thus, $B$ is $\Z$ and $Y$ is a singleton. Moreover, different assignments $\{A_x\}_{x \in X}$ on a space $X$ may give rise to the same $\ell$-group $B$. Consider for instance the one-point compactification of the discrete space $\N$, with the following two assignments.
The first assignment is constantly $\Z$; it yields an $\ell$-group that we call again $B$. The second assignment is everywhere $\Z$ except at the accumulation point, where it is $\frac{1}{2}\Z$; it yields an $\ell$-group that we call $B'$.
It is clear that $B \subseteq B'$.
The converse inclusion holds because the continuity of any $f \in B'$ forces the value of $f$ at the accumulation point to belong to $\Z$.

Theorem \ref{th:gh} provides a ``complete structural description of all'' \cite[p.\ 862]{GH} objects in $\cG$; the preceding paragraph points out that the relationship between $X$ and $B$ in that description is loose. In order to tighten it, we begin observing that the assignment $x \mapsto A_x$ makes $X$ into an a-space via the function $\zeta\colon X\to \N$ defined by $x\mapsto 0$ if $A_x$ is $\R$ and $x\mapsto n$ if $A_x$ is $\frac{1}{n}\Z$. We further observe that for any maximal ideal $\mathfrak{m}$ of $B$ the quotient $B/\mathfrak{m}$ has exactly one unital isomorphism to either $\R$ or $\frac{1}{n}\Z$, for a uniquely determined natural number $n$. Hence, the maximal spectrum $Y$ of $B$ becomes an a-space via the function $\zeta'\colon Y\to \N$ defined by $\mathfrak{m}\mapsto 0$ if $B/\mathfrak{m}$ is isomorphic to $\R$, and $\mathfrak{m}\mapsto n$ if $B/\mathfrak{m}$ is isomorphic to $\frac{1}{n}\Z$. To restate a part of our main results in relation to Theorem \ref{th:gh}: $(X, \zeta)$ is naturally isomorphic as an a-space to $(Y,\zeta')$ exactly when it is a normal a-space (\Cref{t:char fixed points on geom side}).
This is a consequence of the fact that arithmetically normal spaces satisfy the arithmetic version of Urysohn's Lemma given by \Cref{t:Urysohn's Lemma for a-spaces}.

To sum up, the generalisation of Yosida duality that we obtain here strengthens and extends \cref{th:gh}---without assuming it---to a duality for the category $\cG$.

\bigskip Either perspective sketched above promptly suggests much further research---for the sake of brevity, we do not elaborate. We do announce one further result driven by considerations of a rather different nature from the ones motivating this paper: To within an equivalence of categories, $\cG$ may be axiomatised by finitely many equations in an infinitary algebraic language with primitive operations of at most countably infinite arity; for classical Stone-Gelfand Duality, the corresponding result is the main one in \cite{MarraReggio}. The announced result will appear as a separate contribution.

\bigskip We now outline the structure of the paper with the aim of emphasising the proof strategy of our main result, \cref{t:MAIN}. The proof proceeds in a number of steps going from a more general dual adjunction to the duality of \cref{t:MAIN}. The first steps, readily flowing from general theory \cite{DimovTholen, PorstTholen}, are collected in \Cref{s:appendix}. 

The starting point is a dual adjunction between the categories $\Alg$ and $\Tz$ (\Cref{p:basicadj}), where $\Tz$ is the category of a-spaces, and $\Alg$ is the category of all structures in the algebraic signature $\tau \coloneqq\{+,\wedge,\vee,-,0,1\}$ of unital lattice-groups. This first dual adjunction is \emph{natural} in that it is induced by the \emph{dualising object} $\mathbb{R}$. 

The next step is to consider the full subcategory $\Gr$ of $\Alg$ on the unital Abelian $\ell$-groups, and the full subcategory $\Kz$ of $\Tz$ on the compact a-spaces. We prove in \Cref{t:restrictedbasicadj} that the first dual adjunction descends to a second one between these full subcategories. We note here that this second adjunction, unlike the first, is not natural with respect to the underlying-set functors on $\Gr$ and $\Kz$---$\R$ is not compact, and thus not an object of $\Kz$. It is the naturality of the first adjunction that makes it a convenient starting point for the proof of the second, non-natural adjunction.

Readers not interested in the details of these first two adjunctions can safely ignore \Cref{s:appendix} and begin directly from \Cref{s:dual-adjunction}. Here, we set the stage with the definition of a-space and recall as \Cref{p:specificadj} the dual adjunction between $\Gr$ and $\Kz$ constructed in \Cref{s:appendix}.

The body of the paper is then devoted to characterising the fixed subcategories of the dual adjunction between $\Gr$ and $\Kz$, i.e., the full subcategories on those objects at which the component of the (co)unit is an isomorphism.

Work begins in earnest in \Cref{s:fixedspaces}, where we prove that the compact a-spaces fixed by the adjunction are exactly the compact subspaces of $\R^I$ (for some set $I$), on which the denominator function is defined canonically (\Cref{t:easy char fixed points on geom side}) via the standard notion of denominator of a point of $\R^I$. 
The next aim is to obtain an intrinsic characterisation of such a-spaces as precisely the \emph{normal a-spaces} in the sense of \Cref{d:anormal-space}. 
In \Cref{s:a.normal} we prove that the compact subspaces of $\R^I$ indeed are normal a-spaces. 
The converse implication---that any normal a-space embeds as a subobject of $\R^I$---requires the material in Sections \ref{s:urysohn} and \ref{s:ury6}. The key result is a substantial generalisation of Urysohn's Lemma in which denominators are taken into account (\Cref{t:Urysohn's Lemma for a-spaces}).
The two implications are then assembled into the intrinsic characterisation in \Cref{t:char fixed points on geom side} at the end of \Cref{s:ury6}.

Concerning the objects of $\Gr$ fixed by the adjunction, these are characterised in \Cref{t:fixedalgebras} of \Cref{s:fixedalgebras}.  They are exactly the metrically complete $\ell$-groups. To achieve this result we use a strengthening of the Stone-Weierstrass Theorem for $\ell$-groups (\Cref{t:StWe_a-spaces}).

Finally, we state and prove our main result \Cref{t:MAIN}, and in \Cref{c:Yosida} we detail how it specialises to Yosida duality.

\section{Arithmetic spaces}\label{s:dual-adjunction}
We consider $\N\coloneqq\{0,1,2,\ldots\}$ with its divisibility order. For $n \in \N$ we write $\DIV{n}$ for the set of natural numbers that divide $n$. 
%
%
Recall that any (pre)ordered set $(X,\leq)$ can be equipped with the \emph{upper topology} \cite[p.\ 101]{Kelley}, which has as a subbasis for the closed sets the set of principal downsets $\operatorname{\downarrow}{x}\df\{y\in X\mid y\leq x\}$ for $x \in X\}$.
When we regard $\N$ as a topological space we always assume that it is endowed with the upper topology for the divisibility order. 

\begin{remark}\label{r:closed-of-N}
    The closed subsets for the upper topology on $\N$ are precisely the finite downsets along with $\N$.
    To see this, first note that the collection $\mathcal{F}$ consisting of these sets is closed under finite unions and arbitrary intersections.
    Next,  finite downsets are finite unions of principal downsets, so in any topology on $\N$ finite downsets are closed if principal ones are. Hence, any topology on $\N$ such that principal downsets are closed makes each member of $\mathcal{F}$ closed. 
    This shows that $\mathcal{F}$ indeed is the collection of closed sets of the upper topology on $\N$.
\end{remark}


\begin{definition} \label{d:a-space}
An \emph{a-space}---for arithmetic space---is a topological space $X$ equipped with a continuous function $\zeta \colon X \to \N$, where $\N$ is equipped with the upper topology for the divisibility order. We refer to $\zeta$ as the \emph{denominator function} of the a-space. We denote an a-space $(X, \zeta)$ by $X$ when the denominator function is understood.
A function $f \colon X \to X'$ between a-spaces $(X, \zeta)$ and $(X', \zeta')$ is said to \emph{decrease denominators} if, for each $x \in X$,
\begin{equation*}
	\zeta'(f(x)) \text{ divides } \zeta(x).
\end{equation*}
An \emph{a-map}  is a continuous map between a-spaces that decreases denominators. We write $\Tz$ for the category of a-spaces and a-maps. 
\end{definition}

\begin{remark}[A-spaces as a lax comma category]\label{r:lax}
    Any topological space $X$ carries the \emph{specialization preorder} defined by  $x \leq y$ if, and only if, every open subset of $X$ containing $x$ contains $y$. (See e.g.\ \cite[Definition 4.2.1]{Goubault-Larrecq2013}.)
    The specialization preorder induces a preorder on each hom-set in the category $\T$ of topological spaces and continuous maps---for maps $f,g \colon X \to Y$, one sets $f \leq g$ if, and only if,  $f(x) \leq g(x)$ holds for all $x \in X$. This makes $\T$ into a 2-category.
    
Now, the category $\Tz$ introduced in Definition \ref{d:a-space} above coincides with the lax comma category $\T//\N$ of topological spaces over $\N$. Please see \cite{ClementinoLucatelliLunes,ClementinoLucatelliLunesPrezado2024} for some recent results on lax comma ($2$-)categories, as well as further references and background. While $\T//\N$ is itself a $2$-category in a natural manner, in the rest of this remark only its $1$-dimensional structure is relevant.

Our main result Theorem \ref{t:MAIN} establishes that the category of metrically complete $\ell$-groups is dually equivalent to the full subcategory of $\T//\N$ on those objects $X\to\N$ such that $X$ is compact Hausdorff---hence normal---and the map correlates appropriately with the topological normality of $X$. We shall call such a-spaces \emph{arithmetically normal}; see Definition \ref{d:anormal-space} and Remark \ref{r:equivalent-to-normal} below. 

We point out that also the dual categories used in \cite{MR2038562,MR2996994}---two papers which may be regarded as precursors to our work here---can be presented as full subcategories of lax comma categories of spaces in an analogous way, even though they were not originally conceived of in that manner.
In \cite{MR2038562}, the authors prove that the category of locally finite MV-algebras is dually equivalent to the full subcategory determined by Stone spaces of $\T //\mathbf{G}$, where $\mathbf{G}$ is the poset of supernatural numbers equipped with the Scott topology. To within an isomorphism, $\mathbf{G}$ is the poset of (additive) subgroups of $\Q$ containing $1$.
The duality in \cite{MR2038562} was subsequently extended in \cite{MR2996994} to one for the category of the so-called \emph{weakly locally finite MV-algebras}.
The dual category can be presented as the full subcategory determined by Stone spaces of $\T//{\mathrm{Sub}\,\mathbb{R}}$, where ${\mathrm{Sub}\,\mathbb{R}}$ is the poset of subgroups of $\mathbb{R}$ containing $1$, equipped with the Scott topology. For comparison with the present context, in $\T//\N$ the space $\N$ is, to within an isomorphism, the poset of metrically complete subgroups of $\R$ containing $1$ equipped with the upper topology.
\end{remark}

\begin{definition}\label{d:real-denominator}
For $p/q \in \Q$, where $p \in \Z$ and $q \in \Np\coloneqq \N \setminus \{0\}$ are coprime, we define $\den{p/q} \df q$. For $r \in \R \setminus \Q$ we define $\den{r} \df 0$. We extend the definition of $\den$ to $\R^I$, for any set $I$, so that $(\R^I,\den)$ is the $I$-fold power of $(\R,\den)$ in the category of a-spaces and a-maps. For $p\df (p_{i})_{i \in I} \in \R^I$ we define 
\begin{equation} \label{eq:den}
	\den{p}\df \lcm{ \left\{ \den{p_{i}} \mid i \in I \right\} } , 
\end{equation}
where $\lcm$ stands for `least common multiple', and $0$ is the top element of $\N$ regarded as a lattice under the divisibility order. Notice that, for $p\notin \Q^I$, $\den{p} = 0$.
\end{definition}
\begin{lemma} \label{l:RI-is-aspace}
For every set $I$, the function $\den \colon \R^I\to \N$ is continuous.
Therefore, $(\R^{I},\den)$ is an a-space.
\end{lemma}
\begin{proof}
By the definition of the upper topology, we shall prove that $\den^{-1}[\DIV{n}]$ is closed for every $n \in \N$.
The case $n = 0$ is trivial because $\{ x \in \R^I \mid \den{x} \text{ divides }0 \} = \R^I$. If $n > 0$ then $\{ x \in \R^I \mid \den{x} \text{ divides }n \} = \left(\Zmod\right)^I$, and this set is closed because $\Zmod$ is closed and the product of closed subsets is closed in the product topology. Therefore, $\den^{-1}[\DIV{n}]$ is a closed subset of $\R^I$.
\end{proof}
If $S \seq \R^I$ is any subset, we write $(S,\den)$ for the a-space whose denominator function $\den \colon S\to \N$ is the restriction to $S$ of $\den \colon \R^I \to \N$. When no confusion can arise, we simply write $S$ in place of $(S,\den)$.

We denote the category of compact a-spaces and a-maps with $\Kz$ and the category of unital Abelian $\ell$-groups with unit-preserving lattice-group homomorphisms with $\Gr$.
In the remainder of this section we establish an adjunction between $\Kz$ and $\Gr$, confining the proofs to \cref{s:appendix}.

\begin{definition}
For an object $X$ in $\Kz$, we write $\C(X)$ for the set of a-maps from $X$ to $\R$, the latter seen as an a-space. The set $\C(X)$ can be endowed with an $\ell$-group structure by defining operations pointwise. Since $X$ is compact, the functions in $\C(X)$ are bounded, and hence the function constantly equal to $1$ is a unit.
This makes $\C(X)$ into a unital Abelian $\ell$-group.
\end{definition}

\begin{definition}\label{d:Max}
If $G$ is an object in $\Gr$, we define an a-space $(\Max{G},\zeta)$ as follows:
\begin{enumerate}
    \item\label{d:Max:1} $\Max{G}$ is the set of unital $\ell$-homomorphisms from $G$ into $\R$.
    \item\label{d:Max:2} The topology on $\Max{G}$ is generated by the sets
    \[
    \{x \in \Max{G}\mid x(g) \in U\},
    \]
    for $g \in G$, and $U$ an open subset of $\R$.
    \item\label{d:Max:3} The denominator function is given by
    \[
    \zeta(x) \df \lcm\{\den{x(g)} \mid g \in G\},
    \]
    for $x \in \Max{G}$.
\end{enumerate}
\end{definition}

\begin{remark}\label{r:MaxG-is-compact}
Observe that $\Max{G}\seq \R^{G}$.  The topology defined in \Cref{d:Max}\eqref{d:Max:2} is the topology induced on $\Max{G}$ by the product topology on $\R^{G}$ and the denominator defined in \Cref{d:Max}\eqref{d:Max:3} is just the restriction to $\Max{G}$ of the denominator of $(\R^{G},\den)$ (see \Cref{d:real-denominator}). Finally, as shown in \Cref{t:restrictedbasicadj}, for every unital Abelian $\ell$-group $G$, $\Max{G}$ is a compact a-space. 
\end{remark}

The assignments $\Max$ and $\C$ are extended to contravariant functors between $\Gr$ and $\Kz$ by defining their action on morphisms via pre-composition.

For any compact a-space $X$ and any $x\in X$, the function $\ev_x \colon \C(X) \to \R$,
defined by $\ev_x(a) \df a(x)$, is called the \emph{evaluation at $x$}.
For any unital Abelian $\ell$-group $G$ and any $g \in G$, the function $\ev_g \colon \Max{G} \longrightarrow \R$, defined by $\ev_g(x)\df x(g)$, is called the \emph{evaluation at $g$}.
We postpone to \cref{s:appendix} the proof of the following statement (see \cref{t:restrictedbasicadj}).

\begin{proposition} \label{p:specificadj}
The contravariant functors $\Max \colon \Gr \to \Kz$ and $\C \colon \Kz\to \Gr$ are adjoint, with units given, for $G \in \Gr$ and $X \in \Kz$, by the evaluations
\begin{align*}
    	\eta_X \colon X&\longrightarrow \Max\C(X) & \varepsilon_G \colon G & \longrightarrow \C(\Max{G})\\
	x&\longmapsto \ev_x \colon \C(X) \to \R, & g&\longmapsto \ev_g \colon \Max{G}\to \R.\\
\end{align*}
\end{proposition}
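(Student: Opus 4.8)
The plan is to realise Proposition~\ref{p:specificadj} as an instance of the concrete dual adjunction induced by a dualising object, here the set $\R$ carrying simultaneously its unital Abelian $\ell$-group structure (with unit $1$) and its a-space structure (Euclidean topology together with the denominator function of \Cref{d:real-denominator}). The general machinery of \cite{PorstTholen,DimovTholen} produces such an adjunction --- with units automatically the evaluation maps --- as soon as one verifies the two ``hom-set lands in the right category'' conditions: (i) for every $G \in \Gr$ the set $\Max G$ of unital $\ell$-homomorphisms $G \to \R$, regarded as a subset of $\R^{G}$ with the induced topology and denominator, is a \emph{compact} a-space; and (ii) for every compact a-space $X$ the set $\C(X)$ of a-maps $X \to \R$, with the pointwise operations it inherits from $\R^{X}$, is a unital Abelian $\ell$-group. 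The substance of the proof is the verification of (i) and (ii); everything afterwards is formal bookkeeping.

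For (i) I would argue as in \Cref{r:MaxG-is-compact}: since $1$ is a unit of $G$, every $g \in G$ admits $p_{g} \in \Np$ with $-p_{g} \leq g \leq p_{g}$, whence $x(g) \in [-p_{g}, p_{g}]$ for every $x \in \Max G$; thus $\Max G \seq \prod_{g \in G}[-p_{g},p_{g}]$, which is compact by Tychonoff. Moreover $\Max G$ is the intersection, taken over all instances of the finitely many equational axioms for a unit-preserving $\ell$-homomorphism, of equalisers of pairs of continuous maps $\R^{G} \to \R$ (for instance $\Phi \mapsto \Phi(g \vee h)$ against $\Phi \mapsto \Phi(g) \vee \Phi(h)$), hence closed, hence compact; and, by construction, its topology and denominator are precisely those prescribed in \Cref{d:Max}. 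For (ii) the only point needing care is that the a-map property is inherited by the pointwise $\ell$-group operations. This rests on the elementary facts that $\den(r \vee s)$, $\den(r+s)$ and $\den(-r)$ each divide $\lcm\{\den r, \den s\}$ for $r,s \in \R$, and that in $(\N,\mid)$ one has $\lcm\{a,b\} \mid c$ whenever $a \mid c$ and $b \mid c$ --- equivalently, the operations $+, \vee, -, 0, 1$ are themselves a-maps out of the appropriate finite powers of $(\R, \den)$. Together with the boundedness of a continuous real function on a compact space (so that the constant $1$ is a strong order unit), this yields (ii); functoriality of $\Max$ and $\C$ by precomposition is then immediate, using once more that the denominator on a space of the form $\Max H$ is an $\lcm$ of coordinate denominators.

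It then remains to check that the evaluation maps are morphisms in the respective categories and satisfy the triangle identities. That $\ev_g \colon \Max G \to \R$ is an a-map and that $\varepsilon_G$ is a unit-preserving $\ell$-homomorphism are immediate from \Cref{d:Max}: continuity of $\ev_g$ is built into the definition of the topology, $\den(\ev_g(x)) = \den(x(g))$ divides $\zeta(x)$ by definition, and the operations of $\C(\Max G)$ are pointwise. Dually, $\ev_x \colon \C(X) \to \R$ is a unit-preserving $\ell$-homomorphism because the operations of $\C(X)$ are pointwise, $\eta_X$ is continuous because $\eta_X^{-1}(\{y \mid y(a) \in U\}) = a^{-1}(U)$, and $\eta_X$ decreases denominators because every $\den(a(x))$ (for $a \in \C(X)$) divides $\zeta(x)$, so their $\lcm$ does too. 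Naturality of $\eta$ and $\varepsilon$ is routine, and the triangle identities $\C(\eta_X) \circ \varepsilon_{\C(X)} = \mathrm{id}_{\C(X)}$ and $\Max(\varepsilon_G) \circ \eta_{\Max G} = \mathrm{id}_{\Max G}$ follow by unwinding evaluations (for example $(\C(\eta_X) \circ \varepsilon_{\C(X)})(a)$ sends $x \mapsto \ev_x(a) = a(x)$, so it equals $a$). I do not anticipate a genuine obstacle here: the only mildly delicate points are the compactness of $\Max G$ and the compatibility of the denominator function with the $\ell$-group operations, both elementary --- which is precisely why the full argument is deferred to \Cref{s:appendix} (see \cref{t:restrictedbasicadj}).
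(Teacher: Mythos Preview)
Your proposal is correct and follows essentially the same route as the paper: both invoke the concrete-duality machinery of \cite{PorstTholen,DimovTholen} with $\R$ as dualising object, verify that $\C(X)$ is closed under the $\ell$-group operations because those operations are themselves a-maps on finite powers of $(\R,\den)$, and establish compactness of $\Max G$ via Tychonoff together with the observation that $\Max G$ is an intersection of equalisers of continuous maps, hence closed. The only organisational difference is that the paper, noting that $\R$ is not compact and so does not lie in $\Kz$, first sets up the adjunction between the larger categories $\Tz$ (all a-spaces) and $\Alg$ (all $\tau$-algebras) via initial lifts and only then restricts (\cref{t:restrictedbasicadj}), whereas you verify the adjunction data for $\Gr$ and $\Kz$ directly by hand; the underlying computations are the same.
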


\section{A first characterisation of the a-spaces fixed by the adjunction} \label{s:fixedspaces}

In this section, we provide a first characterisation of the compact a-spaces fixed by the adjunction (\Cref{t:easy char fixed points on geom side}) as the ones that are isomorphic as a-spaces to $(K, \den)$ for some set $I$ and some compact subset $K$ of $\R^I$.
Eventually, in \Cref{t:char fixed points on geom side} we will obtain an intrinsic characterisation of these objects.

The next lemma is well known in several variants. For instance, for rings of continuous functions it goes back to Gelfand and Kolmogorov \cite[Lemma to Theorem I]{GelfandKolmogoroff1939}, who in turn cite the ground-breaking 1937 paper \cite{StoneBA} where Stone used a different structure. We need a (known) version for unital lattice-groups for which we are not aware of a suitable reference; we therefore include a proof. For $X$ a compact topological space we write $\Cc(X)$ for the $\ell$-group of continuous real-valued functions on $X$ with unit the function constantly equal to $1$. If $X$ is an a-space then $\C(X)$ is a unital $\ell$-subgroup of $\Cc(X)$.
\begin{lemma} \label{l:ev is surjective}
	Let $X$ be a compact space and let $G$ be a unital $\ell$-subgroup of $\Cc(X)$. Suppose that for all distinct $x,y \in X$ there is $f \in G$ with $f(x) \neq f(y)$. Let $\varphi \colon G \to \R$ be an $\ell$-homomorphism that preserves $1$. Then there exists a unique $x \in X$ such that $\varphi$ is the evaluation at $x$ (i.e., for all $f \in G$, $\varphi(f) = f(x)$).
\end{lemma}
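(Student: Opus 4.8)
The plan is to show that the maximal ideal $\mathfrak{m} \df \ker \varphi$ of $G$ is ``supported'' at a single point of $X$, and that evaluation at that point coincides with $\varphi$.

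First I would observe that uniqueness is immediate from the separation hypothesis: if $\varphi$ were evaluation at both $x$ and $y$ with $x \neq y$, pick $f \in G$ with $f(x) \neq f(y)$, contradicting $f(x) = \varphi(f) = f(y)$. So the work is in existence.

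For existence, the natural approach is a compactness argument on zero sets. For each $f \in G$ consider the closed set $Z_f \df \{x \in X \mid f(x) = \varphi(f)\}$, equivalently the zero set of $f - \varphi(f) \in G$ (here $\varphi(f)$ denotes the constant function, which lies in $G$ since $G$ contains $1$ and is a $\mathbb{Z}$-module---wait, $\varphi(f)$ need not be an integer). So instead I would work with the family of zero sets $Z_g \df \{x \in X \mid g(x) = 0\}$ for $g$ ranging over $\mathfrak{m} = \ker\varphi$, and show $\bigcap_{g \in \mathfrak{m}} Z_g \neq \emptyset$. Since $X$ is compact and each $Z_g$ is closed, it suffices to prove the finite intersection property. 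Given $g_1, \dots, g_n \in \mathfrak{m}$, the element $h \df |g_1| \vee \dots \vee |g_n| = |g_1| + \dots$ --- more simply $h \df |g_1| \vee \cdots \vee |g_n|$ lies in $\mathfrak{m}$ (as $\mathfrak{m}$ is an $\ell$-ideal: it is the kernel of an $\ell$-homomorphism, hence closed under $\vee$ and absolute values among its elements) and $Z_{g_1} \cap \cdots \cap Z_{g_n} = Z_h$. If this were empty, then $h(x) > 0$ for all $x \in X$, so by compactness $h \geq \tfrac1q$ for some $q \in \Np$, i.e. $qh \geq 1$ in $G$; applying $\varphi$ gives $0 = q\varphi(h) = \varphi(qh) \geq \varphi(1) = 1$, a contradiction. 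Hence the family has the finite intersection property, and by compactness there is a point $x \in \bigcap_{g \in \mathfrak{m}} Z_g$.

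It remains to check that $\varphi = \ev_x$ for this $x$. For arbitrary $f \in G$, I would note that $f - \varphi(f)\cdot 1$ need not be in $G$ since $\varphi(f) \notin \Z$ in general; so this step is the main subtlety. The fix: let $c \df \varphi(f) \in \R$ and approximate. For any $\eps > 0$ pick a rational $p/q$ with $|c - p/q| < \eps$; then $g \df qf - p \in G$ satisfies $\varphi(g) = q c - p$, with $|\varphi(g)| < q\eps$. The element $(|g| - q\eps)^+ \df (|g| - q\eps) \vee 0$ lies in $G$, and I claim it lies in $\mathfrak{m}$: indeed $\varphi$ applied to it equals $(|\varphi(g)| - q\eps)\vee 0 = 0$ since an $\ell$-homomorphism to $\R$ commutes with $\vee, \wedge, |\cdot|$ and with subtracting constants that it fixes --- here $q\eps$ may be irrational, so more carefully I take $q\eps$ to be rational from the start (choose $\eps \in \Q$). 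Then $(|g| - q\eps)^+ \in \mathfrak{m}$, so it vanishes at $x$, giving $|g(x)| \leq q\eps$, i.e. $|qf(x) - p| \leq q\eps$, so $|f(x) - p/q| \leq \eps$, hence $|f(x) - c| \leq |f(x) - p/q| + |p/q - c| < 2\eps$. As $\eps$ was arbitrary, $f(x) = c = \varphi(f)$.

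The main obstacle I anticipate is precisely this last point --- the absence of a real scalar multiplication means one cannot directly form $f - \varphi(f)$, so one must route everything through rational approximations and the $\ell$-ideal $\mathfrak{m}$, using that $G$ is a $\mathbb{Z}$-module and that truncations $(\,\cdot\,)^+$ keep us inside $G$. Everything else --- the finite intersection property and the compactness-to-uniform-bound step --- is routine lattice-group manipulation.
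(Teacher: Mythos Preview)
Your approach is sound and broadly correct, but the final approximation step has a small slip that you should fix. You claim that $(|g| - q\eps)^+ \in G$ once you take $\eps \in \Q$; however, $G$ is only a unital $\Z$-module inside $\Cc(X)$, so the constant $q\eps$ lies in $G$ only when $q\eps \in \Z$, not merely when it is rational. The repair is immediate with the same idea: for each positive integer $N$ pick $p \in \Z$ with $|Nc - p| \leq 1$, set $g \df Nf - p \in G$, and use $(|g| - 1)^+$, which now genuinely lies in $G$ and in $\mathfrak m$; this gives $|Nf(x) - p| \leq 1$, hence $|f(x) - c| \leq 2/N$, and letting $N \to \infty$ finishes. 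Everything else (the $\ell$-ideal properties of $\mathfrak m$, the finite intersection argument via $h = |g_1| \vee \cdots \vee |g_n|$, the compactness bound $qh \geq 1$) is correct as you wrote it.

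Modulo that fix, your route is genuinely different from the paper's. The paper does not touch zero sets of $\ker\varphi$; instead it sets
\[
A \df \{x \in X \mid \forall f,g \in G,\ \varphi(f) \leq \varphi(g) \Rightarrow f(x) \leq g(x)\},
\]
writes $A$ as an intersection of the closed sets $\{x \mid f(x) \leq g(x)\}$ over pairs with $\varphi(f) \leq \varphi(g)$, and runs the same compactness/FIP argument using $h = (f_1 - g_1)\vee \cdots \vee (f_n - g_n)$. The payoff is that the last step becomes trivial: for $x \in A$ and integers $p,q$ with $q>0$, the implication $\varphi(qf) \leq p \Rightarrow qf(x) \leq p$ (and its mirror) gives $f(x) = \varphi(f)$ directly by density of the rationals, with no truncation or $\eps$-bookkeeping. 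Your zero-set route is the more ring-theoretic, Gelfand--Kolmogorov style argument; the paper's order-theoretic formulation is tailored so as to sidestep precisely the subtlety you flagged.
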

\begin{proof}
    Uniqueness follows from the fact that $G$ separates the points of $X$.
    
    Let us prove existence.
    Set 
    \[
    A \coloneqq \{ x \in X \mid \forall f \in G \text{ if }\varphi(f) \leq 0 \text{ then } f(x) \leq 0\}.
    \]
    We prove $A \neq \emptyset$.
    Since $X$ is compact and $A$ is the intersection of the family of closed sets $\{ x \in X \mid f(x) \leq 0\}$ for $f \in G$ such that $\varphi(f) \leq 0$, it is enough to show that this family has the finite intersection property. 
    Let $f_1, \dots, f_n \in \{f \in G \mid \varphi(f) \leq 0\}$ and suppose, by way of contradiction,
    \begin{equation} \label{eq:FIP}
        \bigcap_{i \in \{1, \dots, n\}} \{ x \in X \mid f_i(x) \leq 0\} = \emptyset.
    \end{equation}
    Since $\varphi$ preserves $0$ and $1$, $G$ is not a singleton, whence $X \neq \emptyset$.
    By \cref{eq:FIP} and $X \neq \emptyset$, $n \neq 0$.
    Since $n \neq 0$, it makes sense to consider the function $f_1 \lor \dots \lor f_n \colon X \to \R$. Since this is a continuous function on a nonempty compact space, it has a minimum $\lambda$. 
    By \cref{eq:FIP}, for all $x \in X$ and $i \in \{1, \dots, n\}$ we have $f_i(x) > 0$; it follows that $\lambda > 0$.
    Therefore, there is $k \in \N$ such that $k \lambda \geq 1$.
    Thus, $1 \leq k \lambda \leq k(f_1 \lor \dots \lor f_n)$, and applying $\varphi$ we obtain
    \[
    1 \leq k(\varphi(f_1) \lor \dots \lor \varphi(f_n)) \leq k (0 + \dots + 0) = 0,
    \]
    a contradiction.
    Therefore, $A \neq \emptyset$.

    Choose $x \in A$.
    Fix $f \in G$, and let us prove $\varphi(f) = f(x)$.
    For all $q \in \Np$ and $p \in \Z$ the condition $\varphi(f) \leq \frac{p}{q}$ implies
    $\varphi(qf - p) \leq 0$, which implies $q f(x) - p \leq 0$ since $x \in A$, which implies $f(x) \leq \frac{p}{q}$.
    Therefore, $f(x) \leq \varphi(f)$.
    Analogously, $(-f)(x) \leq \varphi(-f)$, i.e.\ $\varphi(f) \leq f(x)$.
    Thus, $\varphi(f) = f(x)$.
\end{proof}

\begin{theorem} \label{t:easy char fixed points on geom side}
	The following conditions are equivalent for a compact a-space $(X,\zeta)$.
	\begin{enumerate}
		\item\label{t:easy char:item1} There exist a set $I$ and a compact subset $K$ of $\R^I$ such that $(X,\zeta)$ and $(K,\den)$ are i\-so\-mor\-phic a-spaces.
		\item\label{t:easy char:item2} There exists a unital Abelian $\ell$-group $(G,1)$ such that $(X,\zeta)$ and $(\Max{G},\den)$ are isomorphic a-spaces.
		\item\label{t:easy char:item3} The map
		\begin{align*}
			\eta_X \colon X&\longrightarrow \Max{\C(X)} \\
			x&\longmapsto \ev_x \colon \C(X) \to \R
		\end{align*} is an isomorphism of a-spaces.
	\end{enumerate}
\end{theorem}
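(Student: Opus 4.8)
The plan is to establish the cycle \eqref{t:easy char:item3} $\Rightarrow$ \eqref{t:easy char:item2} $\Rightarrow$ \eqref{t:easy char:item1} $\Rightarrow$ \eqref{t:easy char:item3}, with the first two implications essentially formal and the last one carrying all the content. For \eqref{t:easy char:item3} $\Rightarrow$ \eqref{t:easy char:item2} one takes $G \df \C(X)$: by hypothesis $\eta_X$ is an isomorphism of a-spaces $X \to \Max(\C(X)) = \Max{G}$, and by \Cref{r:MaxG-is-compact} the denominator on $\Max{G}$ is the restriction of $\den$, so $(X,\zeta) \cong (\Max{G},\den)$. For \eqref{t:easy char:item2} $\Rightarrow$ \eqref{t:easy char:item1}, \Cref{r:MaxG-is-compact} exhibits $\Max{G} \seq \R^{G}$ as a compact subspace whose denominator is the restriction of $\den \colon \R^G \to \N$, so one may take $I \df G$ and $K \df \Max{G}$.

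The substance is \eqref{t:easy char:item1} $\Rightarrow$ \eqref{t:easy char:item3}. First I would note that the property ``$\eta_X$ is an isomorphism of a-spaces'' is invariant under a-space isomorphism of $X$: if $h \colon X \to X'$ is such an isomorphism, naturality of $\eta$ gives $\eta_{X'} \circ h = \Max(\C(h)) \circ \eta_X$ with $\Max(\C(h))$ an isomorphism (functors preserve isomorphisms), so $\eta_X$ is an isomorphism iff $\eta_{X'}$ is. Hence we may assume $(X,\zeta) = (K,\den)$ with $K$ a compact subset of $\R^I$. The crucial observation is that for each $i \in I$ the coordinate projection $\R^I \to \R$ restricts to an a-map $\pi_i \in \C(K)$, since $\den(\pi_i(p)) = \den(p_i)$ divides $\den(p) = \lcm\{\den(p_j) \mid j \in I\}$ for all $p \in K$; in particular $\C(K)$ separates the points of $K$.

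It then remains to run three routine checks. (i) \emph{$\eta_K$ is a bijection}: $\C(K)$ is a unital $\ell$-subgroup of $\Cc(K)$ separating points, so by \Cref{l:ev is surjective} every unital $\ell$-homomorphism $\C(K) \to \R$ (that is, every point of $\Max(\C(K))$) is the evaluation at a unique point of $K$. (ii) \emph{$\eta_K$ is a homeomorphism}: it is continuous by \Cref{p:specificadj}, its source $K$ is compact, and its target $\Max(\C(K)) \seq \R^{\C(K)}$ is Hausdorff, so a continuous bijection from a compact space to a Hausdorff one is a homeomorphism. (iii) \emph{$\eta_K$ preserves denominators}: writing $\zeta'$ for the denominator of $\Max(\C(K))$ and $x = (x_i)_{i\in I} \in K$, one has
\[
\zeta'(\ev_x) = \lcm\{\den(f(x)) \mid f \in \C(K)\}.
\]
Each $f \in \C(K)$ decreases denominators, so $\den(f(x))$ divides $\den(x)$, whence $\zeta'(\ev_x)$ divides $\den(x)$; conversely $\den(x_i) = \den(\pi_i(x))$ divides $\zeta'(\ev_x)$ for every $i$, whence $\den(x) = \lcm\{\den(x_i) \mid i \in I\}$ divides $\zeta'(\ev_x)$. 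Thus $\zeta'(\ev_x) = \den(x)$, so both $\eta_K$ and $\eta_K^{-1}$ decrease denominators, and $\eta_K$ is an isomorphism of a-spaces.

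The only point that is not bookkeeping is (iii): this is where the arithmetic enrichment genuinely intervenes, and it works precisely because the coordinate functions on $K$ already lie in $\C(K)$ and pin down the denominator of each point. Everything else is the usual Gelfand--Kolmogorov argument, here packaged in \Cref{l:ev is surjective}, together with the compact-to-Hausdorff principle.
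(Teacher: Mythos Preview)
Your proposal is correct and follows essentially the same route as the paper: the cycle \eqref{t:easy char:item3} $\Rightarrow$ \eqref{t:easy char:item2} $\Rightarrow$ \eqref{t:easy char:item1} $\Rightarrow$ \eqref{t:easy char:item3}, with the substantive implication handled via \Cref{l:ev is surjective} for bijectivity, the Closed Map Lemma for the homeomorphism, and the two-divisibility argument using the projections $\pi_i \in \C(K)$ for preservation of denominators. The only differences are expository---you make explicit the naturality justification for reducing to $X = K$ and the verification that each $\pi_i$ is an a-map, both of which the paper leaves implicit.
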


\begin{proof}
	The implication \ref{t:easy char:item3} $\Rightarrow$ \ref{t:easy char:item2} is trivial and the implication \ref{t:easy char:item2} $\Rightarrow$ \ref{t:easy char:item1} follows from \Cref{r:MaxG-is-compact}.
	
	Let us prove the implication \ref{t:easy char:item1} $\Rightarrow$ \ref{t:easy char:item3}. Without loss of generality, we may identify $X$ with $K$. The family of projections $\{ \pi_i \colon \R^I\to \R \}_{i \in I}$ separates the points of $X$; therefore, by \Cref{l:ev is surjective}, the map $\eta_X \colon X \to\Max{\C(X)}$ is a bijection. Since $X$ and $\Max{\C(X)}$ are compact Hausdorff spaces and the map $\eta_X$ is continuous (as a consequence of \cref{p:basicadj}),
	$\eta_X$ is closed by the Closed Map Lemma. Hence $\eta_X$ is a homeomorphism.
	
	To conclude the proof, we show that $\eta_X$ preserves denominators. By hypothesis
	\[\zeta(x) = \lcm\{ \den{x_i}\mid i \in I \} \]
	and by definition
	\[\zeta(\eta_X(x)) = \lcm\{ \den{f(x)}\mid f \in \C(X) \}.\] 
	Since each $f \in \C(X)$ is an a-map, $\zeta(\eta_X(x))$ divides $\zeta(x)$ for any $x \in X$. Furthermore, for every $i \in I$ we have $\pi_i \in \C(X)$, and thus \[
	\{ \den{x_i}\mid i \in I \} = \{ \den{\pi_i(x)} \mid i \in I \} \seq \{ \den{f(x)} \mid f \in \C(X) \}.
	\]Therefore, $\lcm \{ \den x_i \mid i \in I \}$ divides $\lcm\{ \den{f(x)} \mid f \in \C(X) \}$, i.e., $\zeta(x)$ divides $\zeta(\eta_X(x))$. Therefore, $\zeta(\eta_X(x)) = \zeta(x)$.
\end{proof}

\section{Normality for a-spaces} \label{s:a.normal}

In this section, and in the following two, we develop the machinery needed to prove Theorem \ref{t:char fixed points on geom side}. In that theorem, we will characterise the compact a-spaces fixed by the adjunction as precisely those satisfying the appropriate generalisation of normality to the arithmetic context.

\begin{definition} \label{d:anormal-space}
    An \emph{arithmetically normal space} (\emph{normal a-space}, for short) is an a-space $(X,\zeta)$ with the following properties.
    \begin{enumerate}[label = N\arabic*., ref = N\arabic*]
    \item\label{d:anormal-space:item1} $X$ is compact.
    \item\label{d:anormal-space:item3prime} For any two distinct points $x$ and $y$ of $X$, there exist two disjoint open neighbourhoods $U$ and $V$ of $x$ and $y$, respectively, such that, for every $z \in X \setminus (U \cup V)$, $\zeta(z) = 0$ (see \Cref{figure:separationpoints}).
    \begin{figure}[h!]
            \begin{center}
            \begin{tikzpicture}[scale=0.5]
            \def\universe{(-2,-2) rectangle (9,3.5)}
            \def\firstcircle{(1,0.75) circle (2cm)}
            \def\secondcircle{(6,0.75) circle (2cm)}
            
            \tikzset{filled/.style={fill=blue!20, draw=blue!50, thick},
            open/.style={fill=white, dashed, thick},
            }
            \begin{scope}
            \draw[filled] \universe node[below left=0.2] {$X$};
            \clip \firstcircle \secondcircle;
            \end{scope}
            
            \draw[open] \firstcircle node[above=0.55] {$U$};
            \draw[open] \secondcircle node[above=0.55] {$V$};
            \node at (1,0.75) {\phantom{$x$} {$\bullet$} $x$};
            \node at (6,0.75) {\phantom{$y$} {$\bullet$} $y$};
            \node[draw=blue!50] at (3.5,-1.3) {$\zeta=0$};
            \end{tikzpicture}
            \end{center}
            \caption{The `arithmetically Hausdorff' property \ref{d:anormal-space:item3prime}.} \label{figure:separationpoints}
        \end{figure}
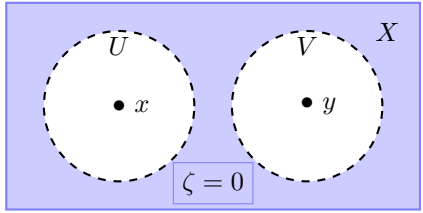
    \end{enumerate}

    We write $\KHz$ for the full subcategory of $\Kz$ on the normal a-spaces.
\end{definition}

\begin{remark} \label{r:equivalent-to-normal}
    The name `arithmetically normal space' stems from the observation, proved in the next proposition, that in the presence of \ref{d:anormal-space:item1} the `arithmetically Hausdorff' condition \ref{d:anormal-space:item3prime} can be equivalently replaced by
    \begin{enumerate}[label = N\arabic*'., ref = N\arabic*',start=2]
    \item\label{d:anormal-space:item3} $X$ is Hausdorff, and for any two disjoint closed subsets $A$ and $B$ of $X$, there exist two disjoint open sets $U$ and $V$ containing respectively $A$ and $B$ and such that, for every $x \in X \setminus (U \cup V)$, $\zeta(x) = 0$.
    \end{enumerate}
    %
        
%
\noindent This also explains our notation $\KHz$ for the category of normal a-spaces.
\end{remark}

\begin{proposition} \label{p:N3-equiv-N3'}
A compact  a-space satisfies \ref{d:anormal-space:item3prime} if and only if it satisfies \ref{d:anormal-space:item3}.
\end{proposition}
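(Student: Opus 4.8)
The plan is to prove the two implications separately; only the second is substantive. The implication \ref{d:anormal-space:item3} $\Rightarrow$ \ref{d:anormal-space:item3prime} is immediate: given distinct points $x,y\in X$, the singletons $\{x\}$ and $\{y\}$ are disjoint closed sets since $X$ is Hausdorff, so applying \ref{d:anormal-space:item3} to them yields disjoint open sets $U\ni x$ and $V\ni y$ with $\zeta(z)=0$ for every $z\in X\setminus(U\cup V)$, which is exactly \ref{d:anormal-space:item3prime}.

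For \ref{d:anormal-space:item3prime} $\Rightarrow$ \ref{d:anormal-space:item3} I would follow the classical two-step compactness argument that derives normality from the Hausdorff property in compact spaces, carrying the clause ``$\zeta$ vanishes off $U\cup V$'' along. It is convenient to record that clause in the form $\zeta^{-1}[\Np]\seq U\cup V$, since this shape is preserved by the finite unions and intersections performed below. \emph{Step 1 (a point versus a closed set).} Fix $x\in X$ and a closed set $B\seq X$ with $x\notin B$. For each $y\in B$, property \ref{d:anormal-space:item3prime} supplies disjoint open sets $U_y\ni x$ and $V_y\ni y$ with $\zeta^{-1}[\Np]\seq U_y\cup V_y$. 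As $B$ is compact, finitely many $V_{y_1},\dots,V_{y_n}$ cover $B$; put $U\df\bigcap_i U_{y_i}$ and $V\df\bigcup_i V_{y_i}$. Then $U$ is an open neighbourhood of $x$, $V$ an open set containing $B$, and $U\cap V=\emptyset$ because $U\seq U_{y_i}$ is disjoint from $V_{y_i}$ for every $i$; moreover each $z\in\zeta^{-1}[\Np]$ lies in every $U_{y_i}\cup V_{y_i}$, hence either lies in all the $U_{y_i}$, and so in $U$, or lies outside some $U_{y_j}$, and so in $V_{y_j}\seq V$, which gives $\zeta^{-1}[\Np]\seq U\cup V$. \emph{Step 2 (two closed sets).} Given disjoint closed $A,B\seq X$, apply Step 1 to each $x\in A$ and to $B$, obtaining disjoint open sets $U_x\ni x$ and $V_x\supseteq B$ with $\zeta^{-1}[\Np]\seq U_x\cup V_x$; compactness of $A$ gives a finite subcover $U_{x_1},\dots,U_{x_m}$ of $A$, and then $U\df\bigcup_i U_{x_i}$ and $V\df\bigcap_i V_{x_i}$ are open sets witnessing \ref{d:anormal-space:item3}, by the same verification as in Step 1 with the roles of $U$ and $V$ interchanged.

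I do not expect a real obstacle, since this is essentially the textbook proof that compact Hausdorff spaces are normal; the only point needing care is that the denominator side-condition is inherited through the two rounds of finite operations, and this works precisely because it was phrased as the inclusion $\zeta^{-1}[\Np]\seq U\cup V$ of a fixed set into the relevant union. Note, incidentally, that property \ref{d:anormal-space:item2} is not used for this equivalence, and that compact Hausdorffness enters only through compactness of closed subspaces and through singletons being closed.
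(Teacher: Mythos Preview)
Your proof is correct and follows essentially the same two-step compactness argument as the paper: first separate a point from a closed set, then bootstrap to two closed sets, in each case taking a finite union on one side and a finite intersection on the other. The only cosmetic difference is that you phrase the side-condition as the inclusion $\zeta^{-1}[\Np]\subseteq U\cup V$ and verify it directly, whereas the paper verifies the contrapositive; your closing observations that \ref{d:anormal-space:item2} is unused and that only compactness of closed subsets and closedness of points are needed are accurate and not stated explicitly in the paper.
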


\begin{proof}
Since points are closed in any Hausdorff space, \ref{d:anormal-space:item3} implies \ref{d:anormal-space:item3prime}.

For the converse implication, suppose the a-space $(X,\zeta)$ satisfies \ref{d:anormal-space:item3prime}.  We first prove that, for any closed subset $C\seq X$ and any $x\in X$ with $x\not\in C$, there exist two disjoint open subsets $O_1$ and $O_2$ such that $C\seq O_1$, $x\in O_2$, and $\zeta(z)=0$ for all $z\in X \setminus (O_1\cup O_2)$.  Since $(X,\zeta)$ satisfies \ref{d:anormal-space:item3prime}, for any $y\in C$ there exists $O_{1,y},O_{2,y}$ disjoint open neighbourhoods of $y$ and $x$, respectively, such that $\zeta(z)=0$ for all $z\in X \setminus (O_{1,y}\cup O_{2,y})$. The family $\{O_{1,y}\mid y\in C\}$ covers $C$, which is closed, whence compact.  Thus, there exist $y_1,\dots,y_n\in C$ such that $C\seq O_{1,y_1}\cup\dots\cup O_{1,y_n}$.  Set $O_1\df O_{1,y_1}\cup\dots\cup O_{1,y_n}$ and $O_2\df O_{2,y_1}\cap\dots\cap O_{2,y_n}$.  The sets $O_1$ and $O_2$ are disjoint open subsets of $X$ such that $C\seq O_1$ and $x\in O_2$.  If $z\in X \setminus (O_1\cup O_2)$ then $z\not\in O_2$, which means that there exists $k\leq n$ with $z\not\in O_{2,y_k}$. Since $z\not\in O_1$, we have $z \not\in O_{1,y_k}$.  Therefore, $z\in X \setminus (O_{1,y_k}\cup O_{2,y_k})$, which implies $\zeta(z)=0$.

Let us now prove the statement.  Let $C_1,C_2$ be disjoint closed sets. We just proved that, for any $y\in C_1$, there exist two disjoint open subsets $O_{1,y}$ and $O_{2,y}$ such that $C_2\seq O_{2,y}$, $y\in O_{1,y}$, and $\zeta(z)=0$ for all $z\in X \setminus (O_{1,y}\cup O_{2,y})$. The family $\{O_{1,y}\mid y\in C_1\}$ covers $C_1$, which is compact.  Thus, there exist $y_1,\dots,y_n\in C_1$ such that $C_1\seq O_{1,y_1}\cup\dots\cup O_{1,y_n}$.  Set $O_1\df O_{1,y_1}\cup\dots\cup O_{1,y_n}$ and $O_2\df O_{2,y_1}\cap\dots\cap O_{2,y_n}$.  The sets $O_1$ and $O_2$ are disjoint open subsets of $X$ such that $C_1\seq O_1$ and $C_2\seq O_2$.  If $z\in X \setminus (O_1\cup O_2)$ then $z\not\in O_2$, which means that there exists $k\leq n$ with $z\not\in O_{2,y_k}$. Since $z\not\in O_1$, we have $z\not\in O_{1,y_k}$.  Therefore we have $z\in X \setminus (O_{1,y_k}\cup O_{2,y_k})$, which implies $\zeta(z)=0$.
\end{proof}

\begin{example}
The a-space $([0,1],{\den})$ is a normal a-space.  
To check  \ref{d:anormal-space:item3prime} holds, consider two points $a<b$ in $[0,1]$.  By the density of the irrational numbers, there exists $r \in [0,1] \setminus \Q$ such that $a<r<b$.  Hence, the open sets $[0,r)$ and $(r,1]$ separate $a$ from $b$, $\{r\} = X \setminus \left([0,r) \cup (r,1]\right)$, and $\den{r} = 0$.
\end{example}

\Cref{t:char fixed points on geom side} below shows that normality characterises, up to a-isomorphism, the a-spaces of the form $(K, \den)$ for $K$ a compact subset of $\R^{I}$, $I$ an arbitrary set.  We conclude this section by proving the easy implication, i.e.\ that any such a-space is normal.  For the other, much deeper implication, we will need to prove a form of Urysohn's Lemma for normal a-spaces. This we do in the following two sections.

\begin{theorem} \label{t:compact is a-normal}
For any set $I$ and any compact subspace $K \seq \R^I$, $(K,\den)$ is a normal a-space.
\end{theorem}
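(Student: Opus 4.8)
The plan is to verify the three defining conditions of an a-normal space for $(K,\den)$ with $K\seq\R^I$ compact. Condition \ref{d:anormal-space:item1} is immediate: $K$ is compact by hypothesis and Hausdorff as a subspace of the Hausdorff space $\R^I$. For condition \ref{d:anormal-space:item2}, fix $n\in\N$ and observe that, by the definition of $\den$ on $\R^I$ via \eqref{eq:den}, we have $\den{p}\in\DIV{n}$ (i.e.\ $\den{p}$ divides $n$) if and only if $\den{p_i}$ divides $n$ for every $i\in I$; when $n = 0$ this is automatic, and when $n\ge 1$ this forces each $p_i$ to lie in the discrete set $\frac{1}{n}\Z$. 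Hence $\zeta^{-1}[\DIV{n}] = K\cap\prod_{i\in I}\tfrac{1}{n}\Z$ (for $n\ge1$; it is all of $K$ for $n=0$), which is closed in $\R^I$ because $\tfrac{1}{n}\Z$ is closed in $\R$, and therefore closed in $K$.

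The substance of the proof is condition \ref{d:anormal-space:item3prime}. Let $x\neq y$ in $K$. Since $x\neq y$ in $\R^I$, there is a coordinate $i_0\in I$ with $x_{i_0}\neq y_{i_0}$. Pick a real number $r$ strictly between $x_{i_0}$ and $y_{i_0}$, chosen moreover to be irrational, which is possible by density of $\R\setminus\Q$ in any nonempty open interval. Set $U\df\{p\in K\mid p_{i_0} < r\}$ and $V\df\{p\in K\mid p_{i_0} > r\}$ (with the obvious orientation so that $x\in U$, $y\in V$). These are disjoint open subsets of $K$, being preimages under the continuous projection $\pi_{i_0}\restriction K$ of the disjoint open half-lines $(-\infty,r)$ and $(r,+\infty)$. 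Now if $z\in K\setminus(U\cup V)$, then $z_{i_0} = r$, which is irrational, so $\den{z_{i_0}} = 0$; since $0$ is the top element of $\N$ under divisibility and $\den{z} = \lcm\{\den{z_i}\mid i\in I\}$, it follows that $\den{z} = 0$, i.e.\ $\zeta(z) = 0$, as required.

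I do not expect a genuine obstacle here: the result is essentially the observation that separation of points in a product of copies of $\R$ can always be achieved along a single coordinate, using an irrational cut whose denominator is $0$, and that $0$ absorbs everything in the $\lcm$. The only points needing mild care are the bookkeeping of the divisibility/$\lcm$ conventions (that $0$ is the top element, so "$\den{z_{i_0}} = 0$" propagates to "$\den{z} = 0$", and dually that "$\den{p}$ divides $n$" unpacks coordinatewise), and keeping track of which of $U,V$ contains which point depending on whether $x_{i_0} < y_{i_0}$ or $x_{i_0} > y_{i_0}$; both are routine. One could alternatively deduce \ref{d:anormal-space:item3prime} by noting that $(K,\den)$ sits inside the a-space $(\R^I,\den)$, which is a power of $([0,1]$-like$)$ copies of $(\R,\den)$, and invoke the one-coordinate argument there and restrict, but carrying it out directly as above is cleaner.
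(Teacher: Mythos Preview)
Your proof is correct and follows essentially the same route as the paper: verify \ref{d:anormal-space:item1} trivially, check \ref{d:anormal-space:item2} by identifying $\den^{-1}[\DIV{n}]$ with $K\cap(\tfrac{1}{n}\Z)^I$ for $n\geq 1$, and for \ref{d:anormal-space:item3prime} separate $x$ and $y$ along a single coordinate $i_0$ by an irrational cut $r$, so that any point on the boundary has $i_0$-coordinate $r$ and hence denominator $0$. The only cosmetic difference is that the paper phrases the last step as ``$\pi_{i_0}$ decreases denominators'' rather than unpacking the $\lcm$ directly, but the content is identical.
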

\begin{proof}


To show  \ref{d:anormal-space:item3prime} holds, let $x \neq y \in K$. 
There exists $i \in I$ such that $\pi_i(x) \neq \pi_i(y)$, where $\pi_i \colon K \to \R$ is the projection on the $i$-th coordinate. Let $\lambda$ be an irrational number between $\pi_i(x)$ and $\pi_i(y)$. The projection $\pi_i$ is continuous, so the subsets $\pi_i^{-1}[(-\infty,\lambda)]$ and $\pi_i^{-1}[(\lambda, + \infty)]$ are open. Since $\den{\lambda} = 0$ and $\pi_i$ decreases denominators, the denominator of each point of $K$ outside $\pi_i^{-1}[(-\infty,\lambda)] \cup \pi_i^{-1}[(\lambda, + \infty)]$ is zero.
\end{proof}

\section{Drafts of a-maps} \label{s:urysohn}

The aim of the following two sections is to prove a version of Urysohn's Lemma for a-spaces.  As in the classical case, one crucially needs to construct a function into $[0,1]$ with a prescribed behaviour. The definition of an a-map combines a topological and an arithmetical condition; the next definition helps isolate the sole arithmetical constraint: a-maps decrease denominators. The points which a denominator decreasing map can send into a fixed $r\in\R$ are bound to have denominators which are divided by $\den{r}$. Accordingly, for any subset $\Lambda\seq \R$ we consider the set of \emph{admissible} denominators (in the above sense) of points which can be sent into $\Lambda$.

\begin{definition} \label{d:adc}
For $\Lambda \seq \R$, we define 
\[
    \ADC(\Lambda) \df \left\{ n \in \N \mid \text{There exists } \lambda \in \Lambda \text{ such that } \den{\lambda} \text{ divides } n \right\}.
\]
\end{definition}

\begin{remark} \label{l:closed under}
The following properties are elementary.
\begin{enumerate}
    \item \label{item:closed-under-multiples} $\ADC(\Lambda)$ is closed under multiples.
    \item \label{item:closed-under-divisors} $\N\setminus \ADC(\Lambda)$ is closed under divisors.
    \item \label{l:monotonicity of ADC 0} The operator $\ADC$ is monotone: if $\Lambda_0\seq \Lambda_1\seq \R$ then $\ADC(\Lambda_0) \seq \ADC(\Lambda_1)$.
    \item If $(\Lambda_i)_{i \in I}$ is a family of subsets of $\R$, then
\[\ADC\left(\bigcup_{i \in I}\Lambda_i\right) = \bigcup_{i \in I}\ADC(\Lambda_i).\]
\end{enumerate}
\end{remark}

In general, $\ADC$ does not commute with intersections. We identify a case in which it does as a corollary of the next lemma. We say  a poset is \emph{lower-directed} if it is nonempty, and every pair of elements has a common lower bound. If $P$ is a preorder and $A\seq P$ then $\ua A\coloneqq \{p \in P \mid a \leq p \text{ for some } a \in A\}$ is the \emph{up-closure} of $A$ in $P$. 
\begin{lemma} \label{l:Esakia's-lemma}
    Let $ f\colon X \to Y$ be a continuous function between topological spaces, and suppose  $X$ is Hausdorff.
    Let $(K_i)_{i \in I}$ be a lower-directed family of compact subsets of $X$.
    Then
    \[
    \ua f\mleft[\bigcap_{i \in I}K_i \mright] = \bigcap_{i \in I} \ua f[K_i],
    \]
    where the up-closures are computed with respect to the specialization preorder of $Y$.
\end{lemma}
\begin{proof}
    This is essentially a special case of \cite[Lemma~8.1]{EscardoLawsonEtAl2004}, but we include a short proof.
    The left-to-right inclusion is obvious.
    For the right-to-left inclusion, note that the up-closure of a set $A$ coincides with the intersection of all open sets containing $A$.
    Therefore, it is enough to prove  $\bigcap_{i \in I} \ua f[K_i]$ is contained in every open set containing $\ua f[\bigcap_{i \in I}K_i]$.
    Let $U$ be an open subset of $Y$ containing $\ua f[\bigcap_{i \in I}K_i]$.
    From the inclusion $f[\bigcap_{i \in I}K_i] \subseteq U$ we deduce $\bigcap_{i \in I}K_i \subseteq f^{-1}[U]$.
    By \cite[Exercise~4.4.18]{Goubault-Larrecq2013} (i.e., the fact that every Hausdorff space is well-filtered), there is $j \in I$ such that $K_j \subseteq f^{-1}[U]$. Therefore, $f[K_j] \subseteq U$, which implies $\ua f[K_j] \subseteq U$. Thus, $\bigcap_{i \in I} f[K_i] \subseteq {\ua f[K_j]} \subseteq U$.
\end{proof}

Applying \cref{l:Esakia's-lemma} to the continuous function $\den \colon \R \to \N$, we obtain:
\color{black}

\begin{corollary}\label{l:very nice property}
For every lower-directed family $(\Lambda_i)_{i \in I}$ of compact subsets of $\R$,
\[
    \ADC\left(\bigcap_{i \in I}\Lambda_i\right) = \bigcap_{i \in I}\ADC(\Lambda_i).
\]
\end{corollary}

\begin{lemma} \label{l:cofinite}
For any $\alpha<\beta \in \R$, the set $\ADC([\alpha,\beta])$ is cofinite in $\N$.
\end{lemma}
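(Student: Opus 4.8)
The plan is to show that $[\alpha,\beta]$ contains, for all but finitely many $n \in \Np$, a rational number whose denominator divides $n$; equivalently, a point of $\frac{1}{n}\Z$. Indeed, membership of $n$ in $\ADC([\alpha,\beta])$ amounts to the existence of $\lambda \in [\alpha,\beta]$ with $\den{\lambda}$ dividing $n$, and the rationals with denominator dividing $n$ are exactly the elements of $\frac{1}{n}\Z$. So it suffices to prove: for $n$ large enough, $[\alpha,\beta] \cap \frac{1}{n}\Z \neq \emptyset$. The elements of $\frac{1}{n}\Z$ are spaced $\frac1n$ apart, so as soon as $\frac1n \leq \beta - \alpha$, i.e. $n \geq \frac{1}{\beta-\alpha}$, any interval of length $\beta-\alpha$ must contain one of them. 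Concretely, take $m \df \lceil n\alpha \rceil$; then $m \geq n\alpha$, and $m \leq n\alpha + 1 \leq n\alpha + n(\beta-\alpha) = n\beta$, so $\frac{m}{n} \in [\alpha,\beta] \cap \frac{1}{n}\Z$, giving $n \in \ADC([\alpha,\beta])$.

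Thus every integer $n \geq \frac{1}{\beta-\alpha}$ lies in $\ADC([\alpha,\beta])$, and the complement $\N \setminus \ADC([\alpha,\beta])$ is contained in the finite set $\{ n \in \N \mid 0 < n < \frac{1}{\beta-\alpha} \}$ (note $0 \in \ADC([\alpha,\beta])$ since $[\alpha,\beta] \neq \emptyset$, by \cref{l:0 in ADC 0} in \Cref{l:closed under}, so $0$ need not be excluded). Hence $\ADC([\alpha,\beta])$ is cofinite in $\N$.

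There is essentially no obstacle here: the only point requiring a moment's care is the bookkeeping with $n = 0$ and with the case $\beta - \alpha \geq 1$ (where the bound is vacuous and one can just take $m = \lceil n\alpha\rceil$ for every $n \geq 1$), but the estimate $\lceil n\alpha \rceil \in [n\alpha, n\beta]$ works uniformly whenever $n(\beta-\alpha) \geq 1$, which fails for only finitely many positive integers $n$. So the real content is the elementary observation that arithmetic progressions with step $\frac1n$ eventually meet any fixed interval.
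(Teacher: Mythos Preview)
Your proof is correct and follows essentially the same approach as the paper: both argue that for every positive integer $n$ with $n \geq \frac{1}{\beta-\alpha}$ the interval $[\alpha,\beta]$ meets $\frac{1}{n}\Z$, whence $n \in \ADC([\alpha,\beta])$. Your version is in fact slightly more detailed, supplying the explicit witness $m = \lceil n\alpha \rceil$ where the paper simply asserts existence, and you also make the case $n=0$ explicit.
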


\begin{proof}
Let $k \in \Np$ be such that $\frac{1}{k} \leq \beta - \alpha$, or, equivalently, $k \geq \frac{1}{\beta - \alpha}$. Then, for every $n\geq k$, there exists $\gamma \in \Zmod \cap [\alpha,\beta]$. In other words, $\gamma \in [\alpha,\beta]$ and $\den{\gamma}$ divides $n$.
Therefore $n \in \ADC([\alpha,\beta])$.
\end{proof}

Given an a-space $X$ and an arbitrary point $r\in\R$, we consider the set of points of $X$ whose denominator is a multiple of $\den{r}$.  Those are exactly the points of $X$ that can be sent into $r$ by a denominator decreasing map. More generally, for any subset $\Lambda \seq \R$, we consider the set of points of $X$ which can be sent into $\Lambda$ by some denominator decreasing map.

\begin{definition} \label{d:ac}
For $(X,\zeta)$ an a-space and $\Lambda \seq \R$, we define 
\[
\AC{\Lambda}_X \df \left\{ x \in X \mid \text{There exists } \lambda \in \Lambda \text{ such that } \den{\lambda} \text{ divides } \zeta(x) \right\}.
\]
We write $\AC{\Lambda}$ when $X$ is understood.
\end{definition}

\begin{remark} \label{r:ACandADC}
Notice that $\AC{\Lambda}_X = \zeta^{-1}[\ADC(\Lambda)]$.
\end{remark}

Let $X$ be an a-space, let $\alpha \leq \beta \in \R$, and let $f \colon X \to [\alpha, \beta]$ be an a-map. In the next definition we single out the main properties of the sets
\[
    f^{-1}\big[[\alpha,r]\big] \text{ and } f^{-1}\big[[r, \beta]\big]
\]
as $r$ ranges in a subset of $[\alpha,\beta]$. This can be seen as a generalisation of the concept of \emph{decomposition} of a topological space \cite[Section 3-6, p.\ 132]{hocking1988topology}.

\begin{definition} \label{d:draft}
Let $X$ be an a-space, and let $\alpha \leq \beta \in \R$. An \emph{$[\alpha,\beta]$-draft} on $X$ consists of a subset $D \seq [\alpha,\beta]$ containing $\alpha$ and $\beta$ and two closed subsets $\down{r}$ and $\up{r}$ of $X$ for each $r \in D$, with the following properties.
\begin{enumerate}[label = D\arabic*., ref = D\arabic*]
\item\label{d:draft:item1} $\up{\alpha} = \down{\beta} = X$.
\item\label{d:draft:item2} For all $r \in D$, $\down{r} \cup \up{r} = X$.
\item\label{d:draft:item3} For all $r < s$ in $D$, $\down{r} \cap \up{s} = \emptyset$.
\item\label{d:draft:item4} For all $r \leq s$ in $D$, $\up{r} \cap \down{s} \seq \AC{[r,s]}$.
\end{enumerate}
\end{definition} 

Given real numbers $\alpha \leq \beta$, a subset $D$ of $[\alpha,\beta]$ containing $\alpha$ and $\beta$, an a-space $X$ and an a-map $f \colon X \to [\alpha, \beta]$, it is easily seen that $(D,(f^{-1}[[\alpha,r]],f^{-1}[[r, \beta]])_{r \in D})$ is an $[\alpha,\beta]$-draft.

\begin{lemma} \label{l:draft-inclusion-properties}
Let $(D, (\down{d}, \up{d})_{d \in D})$ be an $[\alpha,\beta]$-draft on $X$.
Given two elements $r\leq s$ in $D$, we have
\[
    \down{r} \seq \down{s}, \qquad \up{s} \seq \up{r}, \qquad \text {and } \qquad \down{s} \cup \up{r} = X.
\]
\end{lemma}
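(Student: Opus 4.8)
The plan is to derive each of the three assertions directly from the draft axioms \ref{d:draft:item1}--\ref{d:draft:item4}, using only elementary set-theoretic manipulations; no topology beyond the fact that the $\down{d}$ and $\up{d}$ are subsets of $X$ is needed here.

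First I would prove $\down{r} \seq \down{s}$ for $r \leq s$ in $D$. If $r = s$ there is nothing to prove, so assume $r < s$. Take $x \in \down{r}$. By \ref{d:draft:item2} applied at $s$, we have $x \in \down{s} \cup \up{s}$, so it suffices to rule out $x \in \up{s}$. But $x \in \down{r} \cap \up{s}$ would contradict \ref{d:draft:item3}, which says $\down{r} \cap \up{s} = \emptyset$. Hence $x \in \down{s}$. The inclusion $\up{s} \seq \up{r}$ is entirely symmetric: for $x \in \up{s}$, axiom \ref{d:draft:item2} at $r$ gives $x \in \down{r} \cup \up{r}$, and $x \in \down{r}$ would again contradict \ref{d:draft:item3}; so $x \in \up{r}$.

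For the third assertion, $\down{s} \cup \up{r} = X$ when $r \leq s$: the inclusion $\down{s} \cup \up{r} \seq X$ is trivial since both sets live in $X$. For the reverse, take any $x \in X$. By \ref{d:draft:item2} at $r$, either $x \in \up{r}$ — in which case we are done — or $x \in \down{r}$; but $\down{r} \seq \down{s}$ by the first part, so $x \in \down{s}$. Either way $x \in \down{s} \cup \up{r}$.

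There is no real obstacle here: the statement is a formal consequence of the covering property \ref{d:draft:item2} together with the disjointness property \ref{d:draft:item3}, and the only mild subtlety is remembering to treat the case $r = s$ separately (where \ref{d:draft:item3} does not apply, but the conclusions hold trivially or follow from \ref{d:draft:item2} alone). Note that axioms \ref{d:draft:item1} and \ref{d:draft:item4} are not needed for this lemma.
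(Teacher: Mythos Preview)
Your proof is correct and follows essentially the same approach as the paper: both argue the first two inclusions from \ref{d:draft:item2} and \ref{d:draft:item3} (the paper via a short set-algebraic chain, you via element-chasing), and both derive the third equality from the first inclusion together with \ref{d:draft:item2}. Your remark that \ref{d:draft:item1} and \ref{d:draft:item4} are not needed is also accurate.
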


\begin{proof}
If $r = s$, then clearly $\down{r} \seq \down{s}$ and $\up{s} \seq \up{r}$. If $r<s$, then 
\begin{align*}
\down{r}& = \down{r} \cap X = \down{r} \cap (\down{s} \cup \up{s}) & \text{ by \ref{d:draft:item2} in \Cref{d:draft}}\\
& = (\down{r} \cap \down{s}) \cup (\down{r} \cap \up{s}) & \\
& = (\down{r} \cap \down{s}) \cup \emptyset & \text{ by \ref{d:draft:item3} in \Cref{d:draft}}\\
& = \down{r} \cap \down{s}.&
\end{align*}
 So we conclude that $\down{r} \seq \down{s}$.  The second inclusion is proved similarly.  Finally, the last equality in the statement holds because, by the first inclusion in the statement, $\down{r} \cup \up{r}\seq \down{s} \cup \up{r}$, and the former is equal to $X$ by \ref{d:draft:item2} in \Cref{d:draft}.
\end{proof}

\begin{definition}
Let $f \df (D,(\down{d},\up{d})_{d \in D})$ and $f' \df (D',(d^{\vartriangleleft'},d^{\vartriangleright'})_{d \in D'})$ be $[\alpha,\beta]$-drafts on $X$. We say that $f'$ \emph{refines} $f$ if $D \seq D'$ and, for all $d \in D$, $\down{d} = d^{\vartriangleleft'}$ and $\up{d} = d^{\vartriangleright'}$.
\end{definition}

\begin{lemma} \label{l:refinement by one}
Let $X$ be a normal a-space, let $\alpha\leq\beta \in \R$, and let $(D,(\down{d},\up{d})_{d \in D})$ be an $[\alpha,\beta]$-draft on $X$. Let $a< b$ such that $[a,b] \cap D = \{a,b\}$ and let $\lambda \in(a,b)$. Then there exist two closed subsets $\down{\lambda}$, $\up{\lambda}$ of $X$ so that $(D \cup \{\lambda\},(\down{d},\up{d})_{d \in D \cup \{\lambda\}})$ is a $[\alpha,\beta]$-draft on $X$ that refines $(D,(\down{d},\up{d})_{d \in D})$.
\end{lemma}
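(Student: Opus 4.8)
The goal is to insert one new point $\lambda \in (a,b)$ into an $[\alpha,\beta]$-draft, defining closed sets $\down{\lambda}, \up{\lambda}$ so that all four draft axioms survive. The natural candidates are dictated by the monotonicity forced by \Cref{l:draft-inclusion-properties}: since we need $\down{a} \seq \down{\lambda} \seq \down{b}$ and $\up{b} \seq \up{\lambda} \seq \up{a}$, we must "thicken" $\down{a}$ into a slightly larger closed set still contained in $\down{b}$, and dually for $\up{b}$. I would first dispose of trivial cases ($\lambda$ coinciding with nothing new is impossible since $\lambda \in (a,b)$, but e.g.\ one should check what happens at the extremes $a = \alpha$ or $b = \beta$ using \ref{d:draft:item1}). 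Then, the main construction: apply a-normality to separate suitable disjoint closed sets. The obvious disjoint closed pair is $\down{a}$ and $\up{b}$ — these are disjoint by \ref{d:draft:item3} since $a < b$ in $D$. By the a-normality condition \ref{d:anormal-space:item3} (in the form of \Cref{r:equivalent-to-normal}) there exist disjoint open sets $U \supseteq \down{a}$, $V \supseteq \up{b}$ with $\zeta(z) = 0$ for all $z \in X \setminus (U \cup V)$. Set
\[
\down{\lambda} \df X \setminus V, \qquad \up{\lambda} \df X \setminus U.
\]
Both are closed; $\down{\lambda} \cup \up{\lambda} = X \setminus (U \cap V) = X$ since $U \cap V = \emptyset$, giving \ref{d:draft:item2}; and $\down{a} \seq U$ so $\down{a} \seq X \setminus V = \down{\lambda}$ is automatic — similarly $\up{b} \seq \up{\lambda}$. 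For the containments on the other side, $\down{\lambda} = X \setminus V \seq X \setminus \up{b} = \down{b}$ (using \ref{d:draft:item2} at $b$: $X \setminus \up{b} \seq \down{b}$), and dually $\up{\lambda} \seq \up{a}$.

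**Checking the axioms.** Axiom \ref{d:draft:item1} is untouched since $\alpha, \beta \in D$ already and their sets are unchanged. Axiom \ref{d:draft:item2} for $\lambda$ was verified above; for old points it is unchanged. For \ref{d:draft:item3}, the new disjointness conditions to check are $\down{r} \cap \up{\lambda} = \emptyset$ for $r < \lambda$ and $\down{\lambda} \cap \up{s} = \emptyset$ for $\lambda < s$. Since $[a,b] \cap D = \{a,b\}$, any $r \in D$ with $r < \lambda$ satisfies $r \leq a$, so $\down{r} \seq \down{a} \seq U$, hence $\down{r} \cap (X \setminus U) = \down{r} \cap \up{\lambda} = \emptyset$; dually for $s \geq b$, $\up{s} \seq \up{b} \seq V$ so $\down{\lambda} \cap \up{s} = \emptyset$. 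The remaining subtle axiom is \ref{d:draft:item4}: we need $\up{r} \cap \down{\lambda} \seq \AC{[r,\lambda]}$ and $\up{\lambda} \cap \down{s} \seq \AC{[\lambda,s]}$ for the relevant $r \leq \lambda \leq s$ (again only $r \leq a$, $s \geq b$ matter for new conditions). Consider the first. A point $z \in \up{r} \cap \down{\lambda}$: if $z \in U$ then, since $z \in \down{\lambda} = X \setminus V$ and $z \notin$ the complement-of-$(U \cup V)$ region... here I must be careful. The key leverage is: for $z \in X \setminus (U \cup V)$ we have $\zeta(z) = 0$, and $0 \in \ADC([r,\lambda])$ whenever $[r,\lambda] \neq \emptyset$ (\Cref{l:closed under}\eqref{l:0 in ADC 0}), so such $z \in \AC{[r,\lambda]}$ automatically. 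For $z \in \up{r} \cap \down{\lambda} \cap U$: then $z \in U$, so $z \notin V$, so $z \notin \up{b}$; combined with $z \in \up{r}$ and $z \in \down{\lambda} \seq \down{b}$, we get $z \in \up{r} \cap \down{b}$, and by the old \ref{d:draft:item4} at the pair $r \leq b$ this lies in $\AC{[r,b]}$... but that is a larger interval than $[r,\lambda]$, which is a problem.

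**Where it gets hard / how to fix it.** The difficulty above shows that the naive single application of a-normality with the pair $(\down{a}, \up{b})$ is not quite enough to keep \ref{d:draft:item4} tight: separating with a wall near $\lambda$ does not by itself certify that a point on the $\down{}$-side of the wall has denominator in $\ADC([r,\lambda])$ rather than merely $\ADC([r,b])$. The fix is to exploit that $[a,b] \cap D = \{a,b\}$ means the old draft only constrains the pair $(a,b)$, together with the hypothesis that $X$ is a-normal so that the "$\zeta = 0$ outside $U \cup V$" clause does most of the work: any point not in $U \cup V$ has $\zeta = 0$ and lies in every $\AC{[\cdot,\cdot]}$ over a nonempty interval, while points in $U$ (resp.\ $V$) must be shown to satisfy the divisibility constraint for $[r, \lambda]$ (resp.\ $[\lambda, s]$). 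For this I expect one needs to invoke \ref{d:draft:item4} of the \emph{original} draft at the pair $(a,b)$ — $\up{a} \cap \down{b} \seq \AC{[a,b]}$ — and also possibly choose $\lambda$ or refine the separation so that the relevant inclusions land in $\AC{[a,\lambda]}$ and $\AC{[\lambda,b]}$; here the fact that $\ADC([a,\lambda])$ and $\ADC([\lambda,b])$ are cofinite (\Cref{l:cofinite}) and that $\AC{[a,b]} = \AC{[a,\lambda]} \cup \AC{[\lambda,b]}$ up to the $\zeta=0$ points may be exactly what rescues the argument. Concretely: $\ADC([a,b]) = \ADC([a,\lambda]) \cup \ADC([\lambda,b])$ since any $\mu \in [a,b]$ lies in one of the two subintervals, so $\AC{[a,b]} = \AC{[a,\lambda]} \cup \AC{[\lambda,b]}$; the job of the separating sets $U, V$ is then to route each point of $\up{a} \cap \down{b}$ into the correct half. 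Thus I would: (1) obtain $U \supseteq \down{a}$, $V \supseteq \up{b}$ disjoint open with $\zeta \equiv 0$ off $U \cup V$; but additionally (2) shrink so that a point of $\up{a} \cap \down{b}$ landing in $U$ actually sits in $\AC{[a,\lambda]}$ and one landing in $V$ sits in $\AC{[\lambda,b]}$ — this may require a second, more careful application of a-normality separating $\down{a} \cup (\up{a}\cap\down{b}\setminus\AC{[\lambda,b]})$ from $\up{b} \cup (\up{a}\cap\down{b}\setminus\AC{[a,\lambda]})$ after checking these are disjoint closed sets. Verifying that these modified sets are closed and disjoint (using that $\AC{[\cdot,\cdot]} = \zeta^{-1}[\ADC(\cdot)]$ and \ref{d:anormal-space:item2}, via \Cref{l:div of finite is closed} or \Cref{l:cofinite} to see the complements are $\zeta$-preimages of cofinite, hence co-finite-union-of-$\DIV$, sets) is the technical heart of the argument, and the main obstacle I anticipate. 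Once \ref{d:draft:item4} is secured the refinement claim (old sets unchanged) is immediate from the construction.
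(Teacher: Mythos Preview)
Your proposal is correct and lands on exactly the paper's approach: after diagnosing why separating the naive pair $(\down{a},\up{b})$ fails for \ref{d:draft:item4}, you propose separating the enlarged closed sets $\down{a} \cup \big((\up{a}\cap\down{b})\setminus\AC{[\lambda,b]}\big)$ and $\up{b} \cup \big((\up{a}\cap\down{b})\setminus\AC{[a,\lambda]}\big)$, which are precisely the paper's sets $A$ and $B$, and then take $\down{\lambda}=X\setminus V$, $\up{\lambda}=X\setminus U$. The verifications you flag as the ``technical heart''---closedness of $A,B$ via \Cref{l:cofinite} and \Cref{l:div of finite is closed}, their disjointness via $\AC{[a,b]}=\AC{[a,\lambda]}\cup\AC{[\lambda,b]}$ and \ref{d:draft:item4} at $(a,b)$, and the check of \ref{d:draft:item4} for the new $\lambda$---are exactly what the paper carries out.
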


\begin{proof}
To obtain the sets $\down{\lambda}$ and $\up{\lambda}$ claimed in the statement we  apply the separation property \ref{d:anormal-space:item3} in \Cref{r:equivalent-to-normal} to appropriate  subsets of $X$.  We start by defining those subsets and proving that they are closed and disjoint.
Let 
\begin{align}
\label{eq:d-A-and-B}
A\df \down{a} \cup [ (\up{a} \cap \down{b}) \setminus \AC{[\lambda,b]} ] \qquad\text{ and }\qquad B\df \up{b} \cup [ (\up{a} \cap \down{b}) \setminus \AC{[a,\lambda]} ].
\end{align}
To prove  $A$ is closed, we observe  $A = \down{a} \cup [(\up{a} \cap \down{b}) \cap (X \setminus \AC{[\lambda,b]}) ]$ and  $\down{a}$, $\up{a}$ and $\down{b}$ are closed in $X$ by \Cref{d:draft}. So it is enough to prove that $X \setminus \AC{[\lambda,b]}$ is closed, too.
By \Cref{r:ACandADC}, we have
\[
X \setminus \AC{[\lambda,b]} = X \setminus \zeta^{-1}[\ADC([\lambda,b])] = \zeta^{-1}[\N \setminus \ADC([\lambda,b])].
\]
The set $\N \setminus \ADC([\lambda,b])$ is finite by \Cref{l:cofinite}, and it is closed under divisors by \cref{item:closed-under-divisors} in \Cref{l:closed under}.
Thus, $\N \setminus \ADC([\lambda,b])$ is closed in the upper topology (see \cref{r:closed-of-N}), and by the continuity of $\zeta \colon X \to \N$ the set $X \setminus \AC{[\lambda,b]} = \zeta^{-1}[\N \setminus \ADC([\lambda,b])]$ is closed. We conclude  $A$ is closed. The proof that $B$ is closed is analogous.

Now we prove  $A$ and $B$ are disjoint.	Applying distributivity,
\begin{align*}
A \cap B & = \left(\down{a} \cup \left((\up{a} \cap \down{b}) \setminus \AC{[\lambda,b]} \right) \right) \cap \left(\up{b} \cup \left( (\up{a} \cap \down{b}) \setminus \AC{[a,\lambda]}\right) \right) = \\
& = \left(\down{a} \cap \up{b}\right) \;\cup\; \left(\down{a} \cap \left((\up{a} \cap \down{b}) \setminus \AC{[a,\lambda]}\right) \right) \;\cup\\
& \cup\; \left(\left((\up{a} \cap \down{b}) \setminus \AC{[\lambda,b]}\right) \cap \up{b} \right) \;\cup\; \left(\left((\up{a} \cap \down{b}) \setminus \AC{[\lambda,b]}\right) \cap \left( (\up{a} \cap \down{b}) \setminus \AC{[a,\lambda]} \right) \right).
\end{align*}
We prove  each of the four sets in the union above is empty.
\begin{enumerate}
\item $\down{a} \cap \up{b} = \emptyset$ by definition of draft.
\item $\begin{aligned}[t]
\down{a} \cap \big((\up{a} \cap \down{b}) \setminus \AC{[a,\lambda]}\big) & \seq \down{a} \cap (\up{a} \setminus \AC{[a,\lambda]})&\\
& = (\down{a} \cap \up{a}) \setminus \AC{[a,\lambda]}&\\
& \seq \AC{\{a\}}\setminus \AC{[a,\lambda]}&\text{by \ref{d:draft:item4}}\\
& = \emptyset & \text{since }\AC{\{a\}} \seq \AC{[a,\lambda]}.
\end{aligned}$
\item Analogously, $((\up{a} \cap \down{b}) \setminus \AC{[\lambda,b]}) \cap \up{b} = \emptyset$.
\item $\begin{aligned}[t]
&\big((\up{a} \cap \down{b}) \setminus \AC{[\lambda,b]}\big) \cap \big((\up{a} \cap \down{b}) \setminus \AC{[a,\lambda]}\big) \\
& = (\up{a} \cap \down{b}) \setminus \left(\AC{[\lambda,b]} \cup \AC{[a,\lambda]}\right)&\text{} \\
& = (\up{a} \cap \down{b}) \setminus \left(\AC{ [a,\lambda] \cup [\lambda,b]}\right) & \text{} \\
& = (\up{a} \cap \down{b}) \setminus \AC{[a,b]} & \text{} \\
& = \emptyset&\text{ since } \up{a} \cap \down{b} \seq \AC{[a,b]}.
\end{aligned}$
\end{enumerate}
We conclude  $A$ and $B$ are disjoint closed subsets of $X$.  Therefore, by \ref{d:anormal-space:item3} in \Cref{r:equivalent-to-normal}, there exist two open disjoint sets $U$ and $V$ containing respectively $A$ and $B$ and such that, for every $x \in X \setminus (U \cup V)$, $\zeta(x) = 0$. 

We set $\down{\lambda}\df X \setminus V$ and $\up{\lambda}\df X \setminus U$ and we prove  $\left(D \cup \{\lambda\},(\down{d},\up{d})_{d \in D \cup \{\lambda\}}\right)$ is an $[\alpha,\beta]$-draft on $X$. 
By construction, $\down{\lambda}$ and $\up{\lambda}$ are closed.  Condition \ref{d:draft:item1} in the definition of $[\alpha,\beta]$-draft continues to hold because the extremes $\alpha$ and $\beta$ have not changed. Furthermore, since $U \cap V = \emptyset$, we have $\down{\lambda} \cup \up{\lambda} = X$, so also \ref{d:draft:item2} is satisfied.
To prove \ref{d:draft:item3}, from the definitions we immediately deduce
\begin{align} \label{eq:up-and-down1}
\down{a} \seq A \seq \down{\lambda} \text{ and } \up{b} \seq B \seq \up{\lambda}.
\end{align}
Let $r \in D$ with $r<\lambda$. Since by hypothesis $[a,b] \cap D = \{a,b\}$ and $\lambda \in(a,b)$, we have $r\leq a$, and thus, by \Cref{l:draft-inclusion-properties}, $\down{r} \seq \down{a}$. By \cref{eq:up-and-down1}, $\down{a} \seq A$. Hence $\up{\lambda} \cap \down{r} \seq \up{\lambda} \cap \down{a} \seq \up{\lambda} \cap A = \emptyset$, where the latter holds because $\up{\lambda}$ is the complement of $U\supseteq A$. Analogously for $r > \lambda$.

To conclude the proof, we check  \ref{d:draft:item4} holds.
First, observe that 
\begin{align} \label{eq:up-and-down2}
\down{\lambda} \seq \down{b}\text{ and }\up{\lambda} \seq \up{a}.
\end{align}
Indeed, by definition of $\down{\lambda}$, we have $\down{\lambda} \cap B = \emptyset$.  So, by \cref{eq:up-and-down1}, $\down{\lambda} \cap \up{b} = \emptyset$; since $\down{b} \cup \up{b} = X$ by \ref{d:draft:item2} in \cref{d:draft}, we have $\down{\lambda} \seq \down{b}$. The inclusion $\up{\lambda} \seq \up{a}$ is proved analogously.

Let $r \in D$ with $r\leq \lambda$. If $r = \lambda$, then 
\begin{align*}
\up{\lambda} \cap \down{\lambda}&\seq \zeta^{-1}[\{0\}] & \text{since, for every }x \in \up{\lambda} \cap \down{\lambda} = X \setminus (U \cup V),\,\zeta(x) = 0\\
&\seq \zeta^{-1}[\ADC(\{\lambda\})] & \text{because $0 \in \ADC(\{\lambda\})$ by \cref{item:closed-under-multiples} in \Cref{l:closed under}}\\
& = \AC{\{\lambda\}} & \text{by \Cref{r:ACandADC}}. 
\end{align*}

We now consider the case $r<\lambda$. By hypothesis $r\leq a$, whence $\up{a} \seq \up{r}$ by \Cref{l:draft-inclusion-properties}. Observe that:
\begin{align*}
\up{r} \cap \down{\lambda}& = \up{r} \cap (\up{r} \cup \down{\lambda}) \cap X \cap \down{\lambda}& \text{because }\up{r} \cap \down{\lambda} \seq X \\
& = (\up{r} \cup \up{a}) \cap (\up{r} \cup \down{\lambda}) \cap (\down{a} \cup \up{a}) \cap (\down{a} \cup \down{\lambda})&\text{using $\up{a} \seq \up{r}$, \ref{d:draft:item2}, and \ref{eq:up-and-down1}}\\
& = (\up{r} \cap \down{a}) \cup (\up{a} \cap \down{\lambda}) & \text{by distributivity.}
\end{align*}
As a consequence, to prove that $\up{r} \cap \down{\lambda} \seq \AC{[r,\lambda]}$ it is enough to show that $\up{r} \cap \down{a} \seq \AC{[r,\lambda]}$ and $\up{a} \cap \down{\lambda} \seq \AC{[r,\lambda]}$.  For the first inclusion, since $r\leq a$, by \ref{d:draft:item4} we have $\up{r} \cap \down{a} \seq \AC{[r,a]} \seq \AC{[r,\lambda]}$. For the other inclusion, we compute
\begin{align*}
\emptyset& = \up{a} \cap (\down{\lambda} \cap B) &\text{because }\down{\lambda} \cap B = \emptyset \\ 
& = (\up{a} \cap \down{\lambda}) \cap \left(\up{b} \cup \left( (\up{a} \cap \down{b}) \setminus \AC{[a,\lambda]} \right) \right) &\text{by the definition of $B$ (\cref{eq:d-A-and-B})}\\
& = (\up{a} \cap \down{\lambda} \cap \up{b}) \cup \left((\up{a} \cap \down{\lambda}) \cap \left( (\up{a} \cap \down{b}) \setminus \AC{[a,\lambda]}\right) \right) &\text{by distributivity} \\
& = (\up{a} \cap \down{\lambda} \cap \up{b}) \cup \left((\up{a} \cap \down{\lambda}) \setminus \AC{[a,\lambda]} \right) &\text{by \cref{eq:up-and-down2}} \\
& = \emptyset \cup ((\up{a} \cap \down{\lambda}) \setminus \AC{[a,\lambda]})&\text{because, by \ref{d:draft:item3}, } \down{\lambda} \cap \up{b} = \emptyset.
\end{align*}
We conclude  $(\up{a} \cap \down{\lambda}) \setminus \AC{[a,\lambda]} = \emptyset$, which implies $\up{a} \cap \down{\lambda} \seq \AC{[a,\lambda]}$. Since $r\leq a$, the latter entails $\up{a} \cap \down{\lambda} \seq \AC{[r,\lambda]}$. This completes the proof that \ref{d:draft:item4} holds in the case $r < \lambda$.
The proof that \ref{d:draft:item4} holds also for any $r \in D$ such that $r > \lambda$ is analogous.
\end{proof}

\begin{lemma} \label{l:extension to dense}
Let $X$ be a normal a-space, let $\alpha \leq \beta \in \R$, and let $(D,(\down{d},\up{d})_{d \in D})$ be an $[\alpha,\beta]$-draft on $X$. If $D$ is closed, there exists an $[\alpha,\beta]$-draft $(D',(\down{d},\up{d})_{d \in D'})$ on $X$ that refines $(D,(\down{d},\up{d})_{d \in D})$ and such that $D'$ is dense in $[\alpha,\beta]$.
\end{lemma}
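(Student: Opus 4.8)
The goal is to enlarge a closed $[\alpha,\beta]$-draft to one whose index set $D'$ is dense in $[\alpha,\beta]$. The natural strategy is an iterated application of \Cref{l:refinement by one}, which allows us to insert one new point $\lambda$ into a ``gap'' of $D$ — that is, into an interval $(a,b)$ with $[a,b]\cap D=\{a,b\}$ — while preserving all the draft axioms. I would carry out the insertion of \emph{countably many} points, chosen so that the resulting set is dense, and then verify that the axioms \ref{d:draft:item1}--\ref{d:draft:item4} survive the limit.

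\medskip
\noindent\textbf{Step 1: reduce to a countable dense target set.} Since $D$ is closed in $[\alpha,\beta]$, its complement in $[\alpha,\beta]$ is a countable union of disjoint open intervals (the ``gaps'' of $D$), say $(a_k,b_k)$ for $k$ in some countable index set. In each such gap I fix a countable dense subset $E_k\seq(a_k,b_k)$ — for instance the points with dyadic-rational ``relative coordinate'' $a_k+2^{-m}j(b_k-a_k)$. Put $D' \df D \cup \bigcup_k E_k$. Then $D'$ is dense in $[\alpha,\beta]$: any subinterval of $[\alpha,\beta]$ either meets $D$, or is contained in a single gap, where $E_k$ is dense. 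Enumerate the points of $D'\setminus D$ as $\lambda_1,\lambda_2,\dots$ (a countable list, possibly finite).

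\medskip
\noindent\textbf{Step 2: insert the new points one at a time.} Set $D_0\df D$ with its given draft. Inductively, given the $[\alpha,\beta]$-draft on $D_m\df D\cup\{\lambda_1,\dots,\lambda_m\}$ refining the original one, apply \Cref{l:refinement by one} to the point $\lambda_{m+1}$: it lies in a unique gap $(a,b)$ of $D$, and after the first $m$ insertions the two elements of $D_m$ immediately surrounding $\lambda_{m+1}$ bracket an interval $[a',b']$ with $[a',b']\cap D_m=\{a',b'\}$ and $\lambda_{m+1}\in(a',b')$; the lemma produces closed sets $\down{\lambda_{m+1}},\up{\lambda_{m+1}}$ making $D_{m+1}\df D_m\cup\{\lambda_{m+1}\}$ into an $[\alpha,\beta]$-draft refining the one on $D_m$, hence refining the original. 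Since refinement only adds data and never alters the sets already present, the sets $\down{d},\up{d}$ assigned to a given $d\in D'$ stabilise after finitely many steps and are thus well defined on all of $D'$.

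\medskip
\noindent\textbf{Step 3: check that $(D',(\down{d},\up{d})_{d\in D'})$ is a draft.} Here is the one subtlety: \Cref{l:refinement by one} only yields drafts on \emph{finite} extensions of $D$; I must confirm that the ``limit'' structure on the countable set $D'$ still satisfies \ref{d:draft:item1}--\ref{d:draft:item4}. But each of those axioms is a statement about finitely many indices at a time: \ref{d:draft:item1} concerns only $\alpha,\beta$, which are untouched; \ref{d:draft:item2} is about a single $r$; \ref{d:draft:item3} and \ref{d:draft:item4} are about pairs $r\leq s$. Given any such finite collection of indices from $D'$, they all belong to some $D_m$, on which the axioms hold by construction. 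Hence they hold on $D'$. Finally $D\seq D'$ and the sets on $D$ are unchanged, so this is a genuine refinement; and $D'$ is dense by Step 1. This completes the argument.

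\medskip
\noindent\textbf{Main obstacle.} The only real work is organising the countable bookkeeping of Step 2 correctly — in particular ensuring that when $\lambda_{m+1}$ is inserted, the hypothesis ``$[a',b']\cap D_m=\{a',b'\}$'' of \Cref{l:refinement by one} genuinely holds for the \emph{current} immediate neighbours of $\lambda_{m+1}$ in $D_m$, not for its neighbours in $D$. Choosing the enumeration so that this is automatic (e.g., enumerating the dyadic points gap-by-gap and, within a gap, by increasing dyadic level) makes this transparent. Everything else is the routine observation that the draft axioms are ``finitary'' and therefore pass to the union of an increasing chain of drafts.
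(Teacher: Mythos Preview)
Your proof is correct and follows essentially the same strategy as the paper: iteratively apply \Cref{l:refinement by one} to insert countably many new points, then take the union of the resulting chain of drafts, noting that axioms \ref{d:draft:item1}--\ref{d:draft:item4} are finitary. The paper's version is slightly more streamlined---it simply fixes any dense sequence $(q_n)$ in $[\alpha,\beta]$ and, at stage $n$, uses closedness of $D_n$ to locate the immediate neighbours $a=\max(D_n\cap(-\infty,q_n])$ and $b=\min(D_n\cap[q_n,+\infty))$ directly, which obviates both the gap decomposition and your concern about the enumeration order.
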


\begin{proof}
Let $q_n$ be a dense sequence in $[\alpha, \beta]$. 
Define $D_{0} \df D$ and $D_{n+1} \df D_{n} \cup \{ q_{n} \}$. Notice that, for all $n \in \N$, $D_{n}$ is a closed subset of $\R$.  Define a sequence $\Delta_n$ of $[ \alpha, \beta ]$-drafts as follows.
If $q_{n}$ already belongs to $D_{n}$, set $\Delta_{n+1} \df \Delta_n$.  Otherwise, set 
\[
a \coloneqq \max{ ( D_n \cap (-\infty, q_n] ) } \text{ and } b \coloneqq \min{ ( D_n \cap [q_n, +\infty) ) }.
\] 
The numbers $a$ and $b$ exist because the sets $D_n \cap (-\infty, q_n]$ and $D_n \cap [q_n,+\infty)$ are nonempty, closed and bounded subsets of $\R$. Moreover, $a\neq q_n$ and $b\neq q_n$ because $q_n\notin D_n$. It follows that $q_n \in (a,b)$ and $[a,b] \cap D_n = \{a,b\}$, so \Cref{l:refinement by one} applies and we define $\Delta_{n+1} \df(D_{n+1},(\down{d},\up{d})_{d \in D_{n+1}})$ accordingly.

The sequence $\Delta_n$ of $[\alpha,\beta]$-drafts is a chain with respect to the refinement partial order.  Therefore, if we let $D' \df \bigcup_{n \in \N}D_n$, then $(D',(\down{d},\up{d})_{d \in D'})$ is an $[\alpha,\beta]$-draft on $X$ that refines $\Delta_n$ for each $n \in \N$. Since $D'$ contains the sequence $q_n$, it is dense in $[\alpha, \beta]$.
\end{proof}

\section{The arithmetic Urysohn Lemma, and the a-spaces fixed by the adjunction} \label{s:ury6}

In this section we will show that given any $[\alpha,\beta]$-draft on $X$ defined on a dense $D\seq[\alpha,\beta]$, one can construct an a-map $f\colon X\to [\alpha,\beta]$ with the property that $f[\down{r}]\seq [\alpha,r]\text { and }f[\up{r}] \seq [r,\beta]$.  This result will be used to prove the arithmetic version of Urysohn's Lemma.

\begin{lemma} \label{claim:Urysohn1}
Let $(D,(\down{d},\up{d})_{d \in D})$ be an $[\alpha,\beta]$-draft on $X$ and suppose that $D$ is dense in $[\alpha,\beta]$.  Let $\lambda \in [\alpha,\beta]$.
The following two conditions are equivalent.
\begin{enumerate}
    \item \label{i:1}
    For all $r \in D$ if $r < \lambda$ then $x \in \up{r}$.
    \item \label{i:2}
    For all $r \in D$ if $r < \lambda$ then $x \notin \down{r}$.
\end{enumerate}
Moreover, the following two conditions are equivalent.
\begin{enumerate}[resume]
    \item \label{i:3}
    For all $r \in D$ if $\lambda > r$ then $x \in \down{r}$.
    \item \label{i:4}
    For all $r \in D$ if $\lambda > r$ then $x \notin \up{r}$.
\end{enumerate}
\end{lemma}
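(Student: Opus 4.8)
The plan is to prove the equivalence \ref{i:1} $\Leftrightarrow$ \ref{i:2} directly from property \ref{d:draft:item3} of a draft, and then note that \ref{i:3} $\Leftrightarrow$ \ref{i:4} is entirely symmetric, obtained by the order-reversing substitution that swaps $\down{\cdot}$ with $\up{\cdot}$ and reverses the order on $D$ (equivalently, replaces the $[\alpha,\beta]$-draft by its ``mirror'' $[-\beta,-\alpha]$-draft). So the real content is one of the two equivalences, and the density of $D$ in $[\alpha,\beta]$ is what makes the argument go through in both directions.

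For \ref{i:1} $\Rightarrow$ \ref{i:2}: fix $r \in D$ with $r < \lambda$. By density of $D$ there is $s \in D$ with $r < s < \lambda$. Condition \ref{i:1} applied to $s$ gives $x \in \up{s}$. Since $r < s$ in $D$, property \ref{d:draft:item3} yields $\down{r} \cap \up{s} = \emptyset$, so $x \notin \down{r}$. This is the step where density is essential: without an intermediate point $s$ we could only conclude $x \in \up{r}$, which does not by itself exclude $x \in \down{r}$ (the sets $\down{r}$ and $\up{r}$ may overlap by \ref{d:draft:item2}).

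For \ref{i:2} $\Rightarrow$ \ref{i:1}: again fix $r \in D$ with $r < \lambda$ and pick $s \in D$ with $r < s < \lambda$ by density. Condition \ref{i:2} applied to $s$ gives $x \notin \down{s}$; but $\down{s} \cup \up{s} = X$ by \ref{d:draft:item2}, so $x \in \up{s}$. Finally $r < s$ in $D$ and \Cref{l:draft-inclusion-properties} give $\up{s} \seq \up{r}$, hence $x \in \up{r}$. This completes the first equivalence. The second equivalence \ref{i:3} $\Leftrightarrow$ \ref{i:4} follows by the symmetric argument, using that for $s > r$ one has $\down{s} \seq \down{r}$ (again \Cref{l:draft-inclusion-properties}) in place of $\up{s} \seq \up{r}$, and \ref{d:draft:item3} in the form $\down{s} \cap \up{r} = \emptyset$ for $r < s$. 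I do not anticipate a genuine obstacle here; the only point requiring care is to invoke density on the correct side of $\lambda$ in each implication, and to remember that the two halves of the lemma are mirror images rather than identical, so the symmetry should be stated explicitly rather than left to the reader.
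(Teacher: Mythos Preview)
Your argument for \ref{i:1} $\Rightarrow$ \ref{i:2} is exactly the paper's: pick $s\in D$ with $r<s<\lambda$ by density, use \ref{i:1} at $s$, and apply \ref{d:draft:item3}. Your \ref{i:2} $\Rightarrow$ \ref{i:1}, however, is more elaborate than necessary. The paper dispatches this direction in one line using only \ref{d:draft:item2}: if $x\notin\down{r}$ then $x\in\up{r}$ because $\down{r}\cup\up{r}=X$. No density, no intermediate $s$, no inclusion from \Cref{l:draft-inclusion-properties}. So density is essential only for one direction of each equivalence, not both; your remark that it ``makes the argument go through in both directions'' overstates its role.

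Two small slips in your sketch of the symmetric half: in the analogue of \ref{i:2} $\Rightarrow$ \ref{i:1} you would pick $s$ with $\lambda<s<r$, hence $s<r$, and use $\down{s}\seq\down{r}$ for $s<r$ (not ``for $s>r$'' as written); likewise \ref{d:draft:item3} gives $\down{s}\cap\up{r}=\emptyset$ when $s<r$, not when $r<s$. These are relabeling errors rather than conceptual ones, but since you explicitly cautioned about stating the symmetry carefully, it is worth getting the inequalities right.
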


\begin{proof}
    Let us prove the implication \ref{i:1} $\Rightarrow$ \ref{i:2}.
    Let $r \in D$ with $r < \lambda$.
    Then, since $D$ is dense in $[\alpha, \beta]$, there exists $s \in D$ such that $r < s < \lambda$.
    By hypothesis, since $s<\lambda$, we have $x \in \up{s}$.
    By \ref{d:draft:item3} in \cref{d:draft}, we have $\down{r} \cap \up{s} = \emptyset$, whence $x \notin \down{r}$.
    
    The implication \ref{i:2} $\Rightarrow$ \ref{i:1} follows from the fact that, by \ref{d:draft:item3} in \cref{d:draft}, for every $r \in D$ we have $\down{r} \cup \up{r} = X$.
    
    The equivalence between \ref{i:3} and \ref{i:4} can be proved similarly.
\end{proof}

\begin{lemma} \label{l:inf-is-sup}
Let $X$ be an a-space and let $(D,(\down{d},\up{d})_{d \in D})$ be an $[\alpha,\beta]$-draft on $X$. If $D$ is dense in $[\alpha,\beta]$, then for every $x \in X$ we have
\[
\inf \{ r \in D \mid x \in \down{r} \} = \sup \{ r \in D \mid x \in \up{r} \}.
\]
\end{lemma}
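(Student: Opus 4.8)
Let me introduce the notation $L(x) \df \{ r \in D \mid x \in \down{r} \}$ and $U(x) \df \{ r \in D \mid x \in \up{r} \}$, so I want to show $\inf L(x) = \sup U(x)$ for every $x \in X$. (Here I should note that both sets are nonempty: $\beta \in L(x)$ and $\alpha \in U(x)$ by \ref{d:draft:item1} in \Cref{d:draft}, so both the infimum and the supremum are well-defined real numbers in $[\alpha,\beta]$.) The plan is to prove the two inequalities $\sup U(x) \leq \inf L(x)$ and $\inf L(x) \leq \sup U(x)$ separately, the first using the disjointness axiom \ref{d:draft:item3} and the second using the covering axiom \ref{d:draft:item2}.

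For the inequality $\sup U(x) \leq \inf L(x)$: it suffices to show that every element of $U(x)$ is $\leq$ every element of $L(x)$. So take $r \in U(x)$ and $s \in L(x)$; I must show $r \leq s$. Suppose for contradiction that $s < r$. Then $\down{s} \cap \up{r} = \emptyset$ by \ref{d:draft:item3} in \Cref{d:draft}, but $x \in \down{s}$ (since $s \in L(x)$) and $x \in \up{r}$ (since $r \in U(x)$), a contradiction. Hence $r \leq s$ for all such $r, s$, and therefore $\sup U(x) \leq \inf L(x)$. Note this half does not use density.

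For the reverse inequality $\inf L(x) \leq \sup U(x)$: set $t \df \sup U(x)$ and suppose for contradiction that $t < \inf L(x)$. I want to produce an element of $D$ lying strictly between $t$ and $\inf L(x)$, derive from the definitions of $t$ and $\inf L(x)$ that this element lies in neither $L(x)$ nor $U(x)$, and contradict \ref{d:draft:item2}. Concretely: by density of $D$ in $[\alpha,\beta]$ there exists $r \in D$ with $t < r < \inf L(x)$ --- one has to be a little careful that such $r$ can be taken strictly between, but since $t < \inf L(x)$ and $D$ is dense, a suitable $r$ exists (if $t = \inf L(x) - \epsilon$, pick $r$ in the open interval $(t, \inf L(x))$, which is nonempty). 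Since $r < \inf L(x)$, we have $r \notin L(x)$, i.e.\ $x \notin \down{r}$. Since $r > t = \sup U(x)$, we have $r \notin U(x)$, i.e.\ $x \notin \up{r}$. But then $x \notin \down{r} \cup \up{r} = X$ by \ref{d:draft:item2} in \Cref{d:draft}, which is absurd. Therefore $\inf L(x) \leq \sup U(x)$, and combining with the first half gives equality.

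The argument is essentially routine once the two directions are split along the two relevant draft axioms; the only mild subtlety --- and the place I'd be most careful --- is the handling of the endpoints and of strictness in the density step (making sure the sets $L(x)$ and $U(x)$ are nonempty so the $\inf$ and $\sup$ are genuine real numbers, and that the interval in which density is invoked is a genuinely nonempty open interval). I do not expect any real obstacle here; conditions \ref{i:1}--\ref{i:4} of \Cref{claim:Urysohn1} are not needed for this particular lemma, though they will presumably be used together with it when the a-map $f$ is actually constructed as $x \mapsto \inf L(x) = \sup U(x)$.
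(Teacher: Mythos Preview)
Your proof is correct. Both inequalities are argued cleanly: the first via \ref{d:draft:item3}, the second via density together with \ref{d:draft:item2}, and the nonemptiness of $L(x)$ and $U(x)$ is handled by \ref{d:draft:item1}. There is no gap.

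Your route, however, differs from the paper's. The paper does not split into two inequalities; instead it fixes an arbitrary $\lambda \in [\alpha,\beta]$ and shows that the pair of conditions characterising ``$\lambda = \inf\{r \in D \mid x \in \down{r}\}$'' (namely: $r < \lambda \Rightarrow x \notin \down{r}$, and $r > \lambda \Rightarrow x \in \down{r}$) is equivalent, via \Cref{claim:Urysohn1}, to the pair of conditions characterising ``$\lambda = \sup\{r \in D \mid x \in \up{r}\}$''. So the paper deliberately routes the argument through \Cref{claim:Urysohn1}, whereas you bypass that lemma entirely and work directly from \ref{d:draft:item2} and \ref{d:draft:item3}. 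Your approach is shorter and more self-contained for this lemma in isolation; the paper's approach has the structural advantage that \Cref{claim:Urysohn1} is reused verbatim in the proof of \Cref{t:dense has realisation} (to show continuity of the realisation $f$), so proving it once and invoking it here avoids duplicated reasoning. Your remark that \Cref{claim:Urysohn1} is ``not needed for this particular lemma'' is therefore accurate, though the paper chose to use it anyway.
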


\begin{proof}
Fix $x \in X$ and $\lambda \in [\alpha,\beta]$. Observe that $\lambda$ is a lower bound of $\{ r \in D \mid x \in \down{r} \}$ if and only if
\begin{align} \label{l:inf-is-sup:eq1}
\forall r \in D,\; r < \lambda \text{ implies }x \notin \down{r}.
\end{align}
(Indeed, the contrapositive of \eqref{l:inf-is-sup:eq1} states that $\lambda$ is such a lower bound.)

Next, we show that $\lambda\geq s$ for any lower bound $s$ of $\{ t \in D \mid x \in \down{t} \}$ if and only if
\begin{align} \label{l:inf-is-sup:eq2}
\forall r \in D,\; r > \lambda \text{ implies } x \in \down{r}.
\end{align}
Indeed, assume $\lambda$ is such. It follows that any $r > \lambda$ cannot be a lower bound of $\{ t \in D \mid x \in \down{t} \}$, and so there is $s$ with $x \in \down{s}$ and $s<r$; by \Cref{{l:draft-inclusion-properties}} we conclude that $x \in \down{s} \seq \down{r}$.
Vice versa, assume \eqref{l:inf-is-sup:eq2} and suppose $s$ is a lower bound for $\{ t \in D \mid x \in \down{t} \}$. Arguing by contradiction, if $s > \lambda$, then by the density of $D$ in $[\alpha,\beta]$ there exists $u$ with $\lambda<u<s$. By \eqref{l:inf-is-sup:eq2}, we have $x \in \down{u}$, and this contradicts the fact that $s$ is a lower bound. 

To sum up, $\lambda = \inf \{ r \in D \mid x \in \down{r} \}$ if and only if \eqref{l:inf-is-sup:eq1} and \eqref{l:inf-is-sup:eq2} hold.
By \Cref{claim:Urysohn1}, conditions \eqref{l:inf-is-sup:eq1} and \eqref{l:inf-is-sup:eq2} are equivalent to `for all $r \in D$ if $r < \lambda$ then $x \in \up{r}$' and `for all $r \in D$ if $r > \lambda$ then $x \notin \up{r}$', respectively. In turn, the latter conditions together are equivalent to $\lambda = \sup \{ r \in D \mid x \in \up{r} \}$.
We conclude that $\inf \{ r \in D \mid x \in \down{r} \} = \sup \{ r \in D \mid x \in \up{r} \}$. 
\end{proof}
\begin{definition} \label{def:realisation}
Let $X$ be an a-space, let $\alpha \leq \beta \in \R$, and let $(D,(\down{d},\up{d})_{d \in D})$ be an $[\alpha,\beta]$-draft on $X$. A \emph{realisation} of $(D,(\down{d},\up{d})_{d \in D})$ is a function $f \colon X \to [\alpha,\beta]$ such that, for every $r \in D$, 
\[
f[\down{r}]\seq [\alpha,r]\text { and }f[\up{r}] \seq [r,\beta].
\]
\end{definition}
\begin{theorem} \label{t:dense has realisation}
Let $X$ be an a-space, let $\alpha \leq \beta \in \R$, and let $(D,(\down{d},\up{d})_{d \in D})$ be an $[\alpha,\beta]$-draft on $X$, with $D$ a dense subset of $[\alpha,\beta]$.	Then $(D,(\down{d},\up{d})_{d \in D})$ has a unique realisation, given by
\begin{align*}
f \colon X & \longrightarrow [\alpha,\beta]\\
x & \longmapsto \inf \{ r \in D \mid x \in \down{r} \} = \sup \{ r \in D \mid x \in \up{r} \}.
\end{align*}
Furthermore, $f$ is continuous and decreases denominators.
\end{theorem}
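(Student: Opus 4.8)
The plan is to verify the three assertions — that $f$ is a realisation, that it is the unique one, and that it is an a-map — in that order, since uniqueness and continuity will both lean on the description of $f$ established in \Cref{l:inf-is-sup}.

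First I would check that $f$ is well defined and a realisation. Since $\beta \in D$ and $\down{\beta} = X$ by \ref{d:draft:item1}, the set $\{r \in D \mid x \in \down{r}\}$ is nonempty and bounded below by $\alpha$, so the infimum exists and lies in $[\alpha,\beta]$; the equality with the supremum is exactly \Cref{l:inf-is-sup}. For the realisation property, fix $r \in D$ and $x \in \down{r}$: then $r \in \{s \in D \mid x \in \down{s}\}$, so $f(x) = \inf\{\dots\} \leq r$, giving $f[\down{r}] \seq [\alpha,r]$; symmetrically, using the $\sup$ description, $x \in \up{r}$ forces $f(x) \geq r$, so $f[\up{r}] \seq [r,\beta]$.

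Next, uniqueness. Suppose $g$ is any realisation. Fix $x$. For every $r \in D$ with $x \in \down{r}$ we get $g(x) \leq r$, so $g(x)$ is a lower bound of $\{r \in D \mid x \in \down{r}\}$, whence $g(x) \leq f(x)$; for every $r \in D$ with $x \in \up{r}$ we get $g(x) \geq r$, so $g(x) \geq \sup\{r \in D \mid x \in \up{r}\} = f(x)$. Hence $g = f$.

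Finally, the a-map property, which I expect to be the main obstacle. To see $f$ decreases denominators, fix $x$ and set $\lambda \df f(x)$; I would show $\den{\lambda}$ divides $\zeta(x)$. If $\lambda \in D$, then by \Cref{claim:Urysohn1} one shows $x \in \down{\lambda} \cap \up{\lambda}$ (the two $\inf = \sup$ witnesses coincide at $\lambda$), and \ref{d:draft:item4} with $r = s = \lambda$ gives $x \in \AC{[\lambda,\lambda]} = \AC{\{\lambda\}}$, i.e.\ $\den{\lambda}$ divides $\zeta(x)$. If $\lambda \notin D$, I would use density of $D$ to pick, for each $\eps > 0$, elements $r, s \in D$ with $\lambda - \eps < r < \lambda < s < \lambda + \eps$; from the $\sup/\inf$ description $x \in \up{r}$ and $x \in \down{s}$, so by \ref{d:draft:item4} $x \in \AC{[r,s]} = \zeta^{-1}[\ADC([r,s])]$, i.e.\ $\zeta(x) \in \ADC([r,s])$. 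Letting $\eps$ shrink through a sequence, the sets $[r,s]$ form a lower-directed family of compact sets with intersection $\{\lambda\}$, so by \Cref{l:very nice property} $\zeta(x) \in \bigcap \ADC([r,s]) = \ADC(\{\lambda\})$, which again says $\den{\lambda}$ divides $\zeta(x)$. For continuity, I would show each preimage of a subbasic open set $[\alpha, c)$ or $(c, \beta]$ is open: pick $r \in D$ with the relevant strict inequalities (density again), and use the realisation property together with \ref{d:draft:item2} to sandwich the preimage between the complement of a closed set and an open set — concretely $f^{-1}[[\alpha,c)] = \bigcup_{r \in D,\, r < c} (X \setminus \up{r})$ after checking $f(x) < c$ iff $x \notin \up{r}$ for some $r \in D$ with $r < c$, which follows from \Cref{claim:Urysohn1} and the $\sup$ formula; symmetrically for $(c,\beta]$. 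The delicate points are getting the strict-versus-nonstrict inequalities right in these sandwiches and invoking \Cref{l:very nice property} with the correct directed family, so I would be careful there.
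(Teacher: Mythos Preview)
Your approach is essentially the paper's: the realisation and uniqueness arguments are identical, and for continuity the paper dually shows that $f^{-1}[[\alpha,\lambda]] = \bigcap_{r \in D,\, r>\lambda} \down{r}$ and $f^{-1}[[\lambda,\beta]] = \bigcap_{r \in D,\, r<\lambda} \up{r}$ are closed, which is equivalent to your open-set version.

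There is one genuine slip in the denominator-decreasing argument. The claim that $\lambda = f(x) \in D$ forces $x \in \down{\lambda} \cap \up{\lambda}$ is not what \Cref{claim:Urysohn1} says (that lemma only constrains $r$ with $r < \lambda$ or $r > \lambda$), and it can fail. For instance, take $X=\{\ast\}$ with $\zeta(\ast)=0$, $D=\Q\cap[0,1]$, and set $\down{r}=\emptyset$, $\up{r}=X$ for $r\leq\tfrac12$ and $\down{r}=X$, $\up{r}=\emptyset$ for $r>\tfrac12$; this is a $[0,1]$-draft with $f(\ast)=\tfrac12\in D$ but $\ast\notin\down{1/2}$. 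Your case split is therefore both unnecessary and incorrect as stated.

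The fix is exactly what the paper does: drop the case distinction and take the single lower-directed family $\Omega=\{[s,t]\mid x\in\up{s},\ x\in\down{t}\}$. By \ref{d:draft:item1} it contains $[\alpha,\beta]$ (so is nonempty and handles the endpoints $\lambda=\alpha,\beta$ uniformly), by the $\sup=\inf$ description it has intersection $\{f(x)\}$, and by \ref{d:draft:item4} every member $[s,t]$ satisfies $\zeta(x)\in\ADC([s,t])$; then \Cref{l:very nice property} gives $\zeta(x)\in\ADC(\{f(x)\})$. Your ``$\lambda\notin D$'' paragraph is already almost this argument---it works verbatim for any $\lambda\in(\alpha,\beta)$ regardless of whether $\lambda\in D$, and only the endpoint cases need the extra observation that $\alpha$ and $\beta$ themselves serve as the missing $r$ or $s$.
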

\begin{proof}
By \Cref{l:inf-is-sup}, the definitions using $\inf$ or $\sup$ are equivalent.
Let us first prove that $f$ is a realisation of $(D,(\down{d},\up{d})_{d \in D})$.  Let $r \in D$. For every $x \in \down{r}$ we have $f(x) = \inf \{ s \in D \mid x \in \down{s} \} \leq r$. Hence, $f[\down{r}] \seq [\alpha,r]$. Similarly, for every $x \in \up{r}$ we have $f(x) = \sup \{ s \in D \mid x \in \up{s} \} \geq r$.  Hence, $f[\up{r}] \seq [r,\beta]$. 

Let us now prove that $f$ is continuous. Let $\lambda \in [\alpha,\beta]$.  It suffices to prove that $f^{-1}[[\alpha,\lambda]]$ and $f^{-1}[[\lambda,\beta]]$ are closed.
We have
\begin{align*}
f^{-1} \big[[\alpha,\lambda]\big] & = \left\{ x \in X \mid \sup \{ r \in D \mid x \in \up{r} \} \leq \lambda \right\}\\
& = \left\{ x \in X \mid \forall r \in D \cap (\lambda,\beta]\ \  x \notin \up{r} \right\}\\
& = \left\{ x \in X \mid \forall r \in D \cap (\lambda,\beta]\ \  x \in \down{r} \right\} & \text{by \Cref{claim:Urysohn1}}\\
& = \bigcap_{r \in D \cap (\lambda,\beta]} \down{r},
\end{align*} 
which is closed.  Similarly,
\begin{align*}
f^{-1} \big[[\lambda,\beta]\big] & = \left\{ x \in X \mid \inf \{ r \in D \mid x \in \down{r} \} \geq \lambda \right\}\\
& = \left\{ x \in X \mid \forall r \in D \cap [\alpha,\lambda) \ \  x \notin \down{r} \right\}\\
& = \left\{ x \in X \mid \forall r \in D \cap [\alpha,\lambda) \ \  x \in \up{r} \right\} & \text{by \Cref{claim:Urysohn1}}\\
& = \bigcap_{r \in D \cap [\alpha,\lambda)} \up{r},
\end{align*}
which is closed, too.

Next, we prove that $f$ decreases denominators. Fix $x \in X$. Set 
\[
\Lambda_x \df \{ r \in D \mid x \in \up{r} \} \text{ and } \ULambda_x \df \{ r \in D \mid x \in \down{r} \}.
\]
By \ref{d:draft:item1}, $\alpha \in \Lambda_x$ and $\beta \in \ULambda_x$. Set $\Omega \df \{ [s,t] \mid s \in \Lambda_x, t \in \ULambda_x \}$. By the definition of $f$, $f(x) = \sup{\Lambda_x} = \inf{\ULambda_x}$; therefore, $\bigcap_{I \in \Omega} I = \{ f(x) \}$. We verify that $\Omega$ meets the hypotheses of \Cref{l:very nice property}. First, $\Omega\neq \emptyset$ because $[\alpha,\beta]\in \Omega$. Furthermore, for every $[s_1,t_1], [s_2,t_2]\in \Omega$, there exists $[s,t]\in \Omega$ such that $[s,t]\seq [s_1,t_1] \cap [s_2,t_2]$: consider $s \df \max \{ s_1,s_2 \}$ and $t\df \min \{ t_1,t_2 \}$. By \Cref{l:very nice property} we obtain 
\[
\bigcap_{I \in \Omega} \ADC(I) = \ADC\left(\bigcap_{I \in \Omega}I\right) = \ADC(\{ f(x) \}).
\]
If $[s,t] \in \Omega$, then $x \in \up{s} \cap \down{t}$. Hence, by \ref{d:draft:item4}, $x \in \AC{[s,t]}$. Thus, by \Cref{r:ACandADC}, $\zeta(x) \in \ADC([s,t])$. It follows that $\zeta(x) \in \bigcap_{I \in \Omega} \ADC(I) = \ADC(\{ f(x) \})$. This amounts to saying that $\den f(x)$ divides $\zeta(x)$.
Therefore, $f$ decreases denominators.

Finally, let us prove that the realisation of $(D,(\down{d},\up{d})_{d \in D})$ is unique.
Let $f'$ be a realisation of $(D,(\down{d},\up{d})_{d \in D})$.
Fix $x \in X$.
For all $r \in D$ such that $x \in \down{r}$, we have $f'(x) \leq r$. Therefore, $f'(x) \leq \inf \{ r \in D \mid x \in \down{r} \}$. For all $r \in D$ such that $x \in \up{r}$, we have $f'(x) \geq r$. Thus, $ f'(x) \geq \sup \{ r \in D \mid x \in \up{r} \}$.
By \cref{l:inf-is-sup}, $\inf \{ r \in D \mid x \in \down{r} \} = \sup \{ r \in D \mid x \in \up{r} \}$, and hence
\[
    f'(x) = \inf \{ r \in D \mid x \in \down{r} \} = \sup \{ r \in D \mid x \in \up{r} \}. \qedhere
\]
\end{proof}

\begin{theorem} \label{t:strong urysohn}
Let $X$ be a normal a-space, let $\alpha\leq \beta \in \R$ and let $(D,(\down{d},\up{d})_{d \in D})$ be an $[\alpha,\beta]$-draft on $X$, with $D$ a closed subset of $[\alpha,\beta]$. Then $(D,(\down{d},\up{d})_{d \in D})$ admits a continuous denominator decreasing realisation $f \colon X \to [\alpha,\beta]$.
\end{theorem}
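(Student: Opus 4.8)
The plan is to reduce to the dense case already handled by \Cref{t:dense has realisation}, using \Cref{l:extension to dense} to pass from the closed index set $D$ to a dense one. Concretely: since $X$ is a-normal and $D\seq[\alpha,\beta]$ is closed, \Cref{l:extension to dense} applies and yields an $[\alpha,\beta]$-draft $(D',(\down{d},\up{d})_{d \in D'})$ on $X$ that refines $(D,(\down{d},\up{d})_{d \in D})$ and has $D'$ dense in $[\alpha,\beta]$.

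Next I would apply \Cref{t:dense has realisation} to this dense draft: it provides a realisation $f \colon X \to [\alpha,\beta]$ of $(D',(\down{d},\up{d})_{d \in D'})$ which is, moreover, continuous and decreases denominators. It remains only to observe that $f$ is also a realisation of the original draft $(D,(\down{d},\up{d})_{d \in D})$. This is immediate from the definition of refinement: because $D \seq D'$ and, for each $d \in D$, the closed sets $\down{d}$ and $\up{d}$ attached to $d$ by the two drafts coincide, the inclusions $f[\down{r}] \seq [\alpha,r]$ and $f[\up{r}] \seq [r,\beta]$ guaranteed for all $r \in D'$ hold in particular for all $r \in D$. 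Hence $f$ is the desired continuous, denominator-decreasing realisation of $(D,(\down{d},\up{d})_{d \in D})$.

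Since both ingredients are established earlier in the paper, there is no serious obstacle here; the statement is essentially the composition of \Cref{l:extension to dense} and \Cref{t:dense has realisation}. The only point requiring (minimal) care is the last one—checking that a realisation of a refinement restricts to a realisation of the coarser draft—which follows directly from unwinding \Cref{def:realisation} together with the definition of refinement.

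\begin{proof}
Since $X$ is a-normal and $D$ is closed in $[\alpha,\beta]$, by \Cref{l:extension to dense} there exists an $[\alpha,\beta]$-draft $(D',(\down{d},\up{d})_{d \in D'})$ on $X$ that refines $(D,(\down{d},\up{d})_{d \in D})$ and such that $D'$ is dense in $[\alpha,\beta]$. By \Cref{t:dense has realisation}, $(D',(\down{d},\up{d})_{d \in D'})$ has a realisation $f \colon X \to [\alpha,\beta]$ which is continuous and decreases denominators. In particular, for every $r \in D'$ we have $f[\down{r}] \seq [\alpha,r]$ and $f[\up{r}] \seq [r,\beta]$. Since $(D',(\down{d},\up{d})_{d \in D'})$ refines $(D,(\down{d},\up{d})_{d \in D})$, we have $D \seq D'$ and, for every $d \in D$, the set $\down{d}$ (resp.\ $\up{d}$) of the two drafts coincide. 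Therefore $f[\down{r}] \seq [\alpha,r]$ and $f[\up{r}] \seq [r,\beta]$ for all $r \in D$, i.e.\ $f$ is a realisation of $(D,(\down{d},\up{d})_{d \in D})$. As noted, $f$ is continuous and decreases denominators, so it is the required realisation.
\end{proof}
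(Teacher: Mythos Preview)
Your proof is correct and follows exactly the same approach as the paper: apply \Cref{l:extension to dense} to obtain a dense refinement, then \Cref{t:dense has realisation} to get the realisation, and observe that a realisation of a refinement is automatically a realisation of the original draft. The paper dispatches this last point with ``Clearly'', whereas you spell it out; otherwise the arguments are identical.
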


\begin{proof}
By \Cref{l:extension to dense}, there is dense subset $D'$ of $[\alpha, \beta]$ and an $[\alpha,\beta]$-draft $(D',(\down{d},\up{d})_{d \in D'})$ on $X$ that refines $(D,(\down{d},\up{d})_{d \in D})$. By \Cref{t:dense has realisation}, $(D',(\down{d},\up{d})_{d \in D'})$ has a continuous denominator decreasing realisation $f \colon X \to [\alpha,\beta]$. 
Clearly, $f$ is a realisation of $(D,(\down{d},\up{d})_{d \in D})$, too.
\end{proof}

\begin{remark}
Not every draft admits a continuous realisation: the hypothesis that $D$ is closed in \Cref{t:strong urysohn} cannot be dropped.
Indeed, consider the normal a-space $X \df [0, \frac{1}{2}]$ equipped with the Euclidean topology and with the function $\zeta \colon X \to \N$ constantly equal to $0$.
This is a normal a-space.
Set $D\df[0,\frac{1}{2}) \cup \{ 1 \}$; for every $r \in [0,\frac{1}{2})$, set $\down{r} \df [0,r]$, and $\up{r} \df[r,\frac{1}{2}]$; set $\down{1} = [0,\frac{1}{2}]$ and $\up{1} = \{ \frac{1}{2} \}$. This gives a $[0,1]$-draft on $X$. However, there is no continuous realisation of $(D,(\down{d},\up{d})_{d \in D})$. Indeed, if $f$ were a continuous realisation of $(D,(\down{d},\up{d})_{d \in D})$, then we would have $f(\frac{1}{2}) = 1$ and, for every $x \in [0,\frac{1}{2})$, $f(x) \leq \frac{1}{2}$: a contradiction.
\end{remark}

\begin{theorem}[Urysohn's Lemma for a-spaces]\label{t:Urysohn's Lemma for a-spaces}
Let $A$ and $B$ be disjoint closed subsets of a normal a-space $(X, \zeta_X)$, and let $\alpha,\beta \in \R$ with $\alpha \leq \beta$. Suppose that the following conditions hold.
\begin{enumerate}
    \item \label{i:one}
    For all $x \in X$ there is $\lambda \in [\alpha, \beta]$ such that $\den{\lambda}$ divides $\zeta_X(x)$.
    \item \label{i:two}
    For all $a \in A$, $\den{\alpha}$ divides $\zeta_X(a)$.
    \item \label{i:three}
    For all $b \in B$, $\den{\beta}$ divides $\zeta_X(b)$.
\end{enumerate}
Then there exists an a-map $f \colon X \to [\alpha, \beta]$ such that $f(x) = \alpha$ for all $x \in A$, and $f(x) = \beta$ for all $x \in B$.
\end{theorem}

\begin{proof}
Set $\down{\alpha} \df A$, $\up{\alpha} \df X$, $\down{\beta} \df X$, and $\up{\beta} \df B$. In light of our assumptions \ref{i:one}--\ref{i:three},
$(\{ \alpha,\beta \}, (\down{d},\up{d})_{d \in \{ \alpha,\beta \}})$ is an $[\alpha,\beta]$-draft on $X$. 
By \Cref{t:strong urysohn}, $(\{ \alpha,\beta \}, (\down{d},\up{d})_{d \in \{ \alpha,\beta \}})$ admits a continuous denominator decreasing realisation $f \colon X \to [\alpha, \beta]$.
By definition of realisation (\cref{def:realisation}), the following conditions hold: (i) for all $x \in A = \down{\alpha}$ we have $f(x) \in \{\alpha\}$, and (ii) for all $x \in B = \up{\beta}$ we have $f(x) \in \{\beta\}$.
\end{proof}

\begin{remark}
The classical Urysohn Lemma is obtained from \Cref{t:Urysohn's Lemma for a-spaces} upon choosing $\zeta_X$ to be the function constantly equal to $0$.
\end{remark}

\begin{corollary} \label{t:eta in injective}
    Given a normal a-space $X$ and given $x \neq y \in X$, there exists an a-map $f \colon X \to [0,1]$ such that $f(x) = 0$ and $f(y) = 1$.
\end{corollary}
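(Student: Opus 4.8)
The plan is to invoke the arithmetic Urysohn Lemma, \Cref{t:Urysohn's Lemma for a-spaces}, with the singletons $A \df \{x\}$ and $B \df \{y\}$ in the roles of the two disjoint closed sets, and with $\alpha \df 0$ and $\beta \df 1$. First I would check that the hypotheses of that theorem are met. Since $x \neq y$, the sets $\{x\}$ and $\{y\}$ are disjoint; and since $X$ is Hausdorff by \ref{d:anormal-space:item1}, singletons are closed, so $\{x\}$ and $\{y\}$ are disjoint closed subsets of the a-normal space $X$.

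It remains to verify the three containments $X \seq \AC{[0,1]}$, $\{x\} \seq \AC{\{0\}}$, and $\{y\} \seq \AC{\{1\}}$. Here the point is simply that $\den{0} = \den{1} = 1$, and $1$ divides every natural number --- including $0$, the top of the divisibility lattice. Hence $\ADC(\{0\}) = \ADC(\{1\}) = \ADC([0,1]) = \N$, so by \Cref{r:ACandADC} we get $\AC{\{0\}}_X = \AC{\{1\}}_X = \AC{[0,1]}_X = \zeta^{-1}[\N] = X$. In particular all three containments hold trivially.

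Therefore \Cref{t:Urysohn's Lemma for a-spaces} applies and produces an a-map $f \colon X \to [0,1]$ with $f(z) = 0$ for every $z \in A = \{x\}$ and $f(z) = 1$ for every $z \in B = \{y\}$; that is, $f(x) = 0$ and $f(y) = 1$, as desired. There is no genuine obstacle here: the entire difficulty has already been absorbed into the construction of drafts and their realisations in the preceding two sections, and this corollary is merely the instantiation of that machinery at the two closed sets $\{x\}$ and $\{y\}$, with the arithmetic side conditions rendered vacuous by the choice of integer endpoints $0$ and $1$.
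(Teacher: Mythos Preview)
Your proof is correct and follows exactly the same approach as the paper: apply \Cref{t:Urysohn's Lemma for a-spaces} with $A=\{x\}$, $B=\{y\}$, $\alpha=0$, $\beta=1$, noting that the arithmetic side conditions $\AC{[0,1]}=\AC{\{0\}}=\AC{\{1\}}=X$ hold because $\den 0 = \den 1 = 1$. Your write-up is in fact more detailed than the paper's, which records the same argument in two sentences.
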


\begin{proof}
    It is immediate that $\AC{[0,1]} = \AC{\{0\}} = \AC{\{1\}} = X$.
    Thus, to complete the proof we may apply \Cref{t:Urysohn's Lemma for a-spaces} with $\alpha = 0$, $\beta = 1$, $A = \{x\}$ and $B = \{y\}$.
\end{proof}

\begin{corollary} \label{t:eta preserves denominators}
    Given a normal a-space $X$ and given $x \in X$, there exists an a-map $f \colon X \to [0,1]$ such that $\den{f(x)} = \zeta(x)$.
\end{corollary}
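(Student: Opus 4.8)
The plan is to obtain this as a short corollary of the arithmetic Urysohn Lemma (\Cref{t:Urysohn's Lemma for a-spaces}): I would apply it with $A$ the singleton $\{x\}$, $B$ empty, and an interval $[\alpha,\beta]$ whose left endpoint $\alpha$ has denominator exactly $\zeta(x)$, so that the conclusion $f(x)=\alpha$ gives $\den{f(x)}=\den{\alpha}=\zeta(x)$.

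First I would record that $[0,1)$ contains a real of prescribed denominator $n$ for every $n\in\N$: if $n\geq 1$ one can take $\alpha\df\frac{n-1}{n}$ (which is in lowest terms, so $\den{\alpha}=n$), and if $n=0$ one can take any irrational number in $(0,1)$. So, given $x\in X$, I set $n\df\zeta(x)$, pick such an $\alpha\in[0,1)$ with $\den{\alpha}=n$, and set $\beta\df 1$; then $\alpha\leq\beta$ and $[\alpha,\beta]\seq[0,1]$.

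Next I would verify the three hypotheses of \Cref{t:Urysohn's Lemma for a-spaces} for the disjoint closed sets $A\df\{x\}$ and $B\df\emptyset$ (here $\{x\}$ is closed because an a-normal space is Hausdorff by \ref{d:anormal-space:item1}). Since $1\in[\alpha,\beta]$ and $\den{1}=1$ divides every natural number, we get $\ADC([\alpha,\beta])=\N$, so by \Cref{r:ACandADC} $\AC{[\alpha,\beta]}_X=\zeta^{-1}[\N]=X$; this is the global hypothesis $X\seq\AC{[\alpha,\beta]}$. For the endpoint condition at $\alpha$, we have $\den{\alpha}=n=\zeta(x)$, so $\den{\alpha}$ divides $\zeta(x)$ (indeed with equality), i.e.\ $x\in\AC{\{\alpha\}}$ and thus $A\seq\AC{\{\alpha\}}$ --- in the case $n=0$ this uses that $0$ divides $0$ in the divisibility order on $\N$. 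The condition $B\seq\AC{\{\beta\}}$ holds vacuously.

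Then \Cref{t:Urysohn's Lemma for a-spaces} produces an a-map $f\colon X\to[\alpha,\beta]$ with $f(x)=\alpha$ (item (1), since $x\in A$). Post-composing with the inclusion $[\alpha,\beta]\hookrightarrow[0,1]$, which is continuous and denominator-preserving, hence an a-map, yields an a-map $f\colon X\to[0,1]$ with $\den{f(x)}=\den{\alpha}=\zeta(x)$, as required. There is essentially no obstacle here; the one point that needs care is arranging the ``global'' hypothesis $X\seq\AC{[\alpha,\beta]}$ of the arithmetic Urysohn Lemma, which is precisely why $\beta$ is taken to be $1$: the denominator-$1$ point $1$ then lies in the interval and forces $\ADC([\alpha,\beta])=\N$.
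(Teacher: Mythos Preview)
Your proof is correct and follows essentially the same approach as the paper's: both apply \Cref{t:Urysohn's Lemma for a-spaces} with one of $A$, $B$ equal to $\{x\}$ and the other empty, anchoring the interval at an integer endpoint (you use $\beta=1$, the paper uses $\alpha=0$) to force $\AC{[\alpha,\beta]}=X$, and placing a point of denominator $\zeta(x)$ at the other endpoint. The only cosmetic difference is that you make the post-composition with the inclusion into $[0,1]$ explicit, while the paper leaves it implicit.
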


\begin{proof}
    Choose $\lambda \in (0,1]$ such that $\den{\lambda} = \zeta(x)$.
    Since $0 \in [0, \lambda]$, we have $X = \AC{[0, \lambda]}$.
    Thus, we may apply Urysohn's lemma (\Cref{t:Urysohn's Lemma for a-spaces}) with $\alpha = 0$, $\beta = \lambda$, $A = \emptyset$ and $B = \{x\}$.
\end{proof}

\begin{theorem} \label{t:char fixed points on geom side}
    The following conditions are equivalent for a compact a-space $(X,\zeta)$.
    \begin{enumerate}
        \item\label{i:a-normal} $X$ is arithmetically normal.
        \item\label{i:compact} There exist a set $I$ and a compact $K \seq \R^I$ such that $(X,\zeta)$ and $(K,\den)$ are isomorphic a-spaces.
        \item\label{i:closed} There exist a set $I$ and a closed $K \seq [0,1]^I$ such that $(X,\zeta)$ and $(K,\den)$ are isomorphic a-spaces.
    \end{enumerate}
\end{theorem}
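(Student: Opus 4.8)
The plan is to close a cycle of implications
$\ref{i:compact} \Rightarrow \ref{i:a-normal} \Rightarrow \ref{i:closed} \Rightarrow \ref{i:compact}$,
using the machinery already built. The implication $\ref{i:compact} \Rightarrow \ref{i:a-normal}$ is exactly \Cref{t:compact is a-normal}, so nothing new is needed there. The implication $\ref{i:closed} \Rightarrow \ref{i:compact}$ is trivial, since a closed subset of the compact space $[0,1]^I$ is a compact subset of $\R^I$, and the denominator function on $[0,1]^I$ agrees with the restriction of $\den$ on $\R^I$ by \Cref{d:real-denominator}. So the whole weight of the theorem lies in the remaining implication $\ref{i:a-normal} \Rightarrow \ref{i:closed}$, and this is where the arithmetic Urysohn lemma is deployed.

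For $\ref{i:a-normal} \Rightarrow \ref{i:closed}$, I would embed an a-normal space $X$ into a Tychonoff cube $[0,1]^I$ via a family of a-maps that (i) separates points, (ii) separates points from closed sets (so the embedding is a topological embedding onto a closed subspace, $X$ being compact), and (iii) realises the denominator at every point, i.e.\ for each $x$ there is a coordinate map $f$ with $\den{f(x)}=\zeta(x)$. Concretely, let $I$ be the set $\C(X)$ of \emph{all} a-maps $X \to [0,1]$ (note $\C(X)$ here denotes a-maps into $[0,1]$, not into $\R$, but these are in obvious bijection via truncation — I would simply take $I \df \{ f \colon X \to [0,1] \mid f \text{ is an a-map}\}$ and let $e \colon X \to [0,1]^I$ be the evaluation map $e(x) \df (f(x))_{f \in I}$). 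By \Cref{t:eta in injective}, the family $I$ separates points, so $e$ is injective; by \Cref{t:eta preserves denominators}, for each $x$ there is $f \in I$ with $\den{f(x)} = \zeta(x)$, hence — exactly as in the denominator-preservation argument in the proof of \Cref{t:easy char fixed points on geom side} — one gets $\den{e(x)} = \lcm\{\den{f(x)} \mid f \in I\} = \zeta(x)$, using that each $f$ is an a-map (so $\den{f(x)}$ divides $\zeta(x)$, giving one divisibility) together with the witness $f$ (giving the other). Since $X$ is compact Hausdorff and $e$ is a continuous injection into a Hausdorff space, $e$ is a closed embedding by the Closed Map Lemma, so $K \df e[X]$ is a closed subset of $[0,1]^I$ and $e \colon X \to (K,\den)$ is an isomorphism of a-spaces.

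The main obstacle — though in this paper it is not really an obstacle anymore, having been front-loaded into the preceding sections — is getting enough a-maps into $[0,1]$: an ordinary compact Hausdorff space already embeds in a cube by classical Urysohn, but here we additionally need the embedding to be \emph{denominator-faithful}, and that is precisely what \Cref{t:eta preserves denominators} (hence ultimately the arithmetic Urysohn lemma \Cref{t:Urysohn's Lemma for a-spaces}) supplies. One should double-check a small point: that $e$ being a topological embedding plus denominator-faithfulness is exactly the definition of an a-space isomorphism onto $(K,\den)$; this is immediate since an a-map is a continuous denominator-decreasing map, $e$ and $e^{-1}$ are both continuous, and denominators are preserved in both directions. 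I would also remark that classical separation of points from closed sets is automatic here because the a-maps $X \to [0,1]$ already include, by \Cref{t:eta in injective} applied pointwise and a standard compactness argument, enough functions to separate a point from any closed set not containing it — but in fact this is not even needed, since injectivity of $e$ plus compactness of $X$ plus Hausdorffness of the cube already force $e$ to be a closed embedding.
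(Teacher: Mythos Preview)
Your proposal is correct and follows essentially the same approach as the paper: the same cycle of implications, with the key step $\ref{i:a-normal}\Rightarrow\ref{i:closed}$ handled by embedding $X$ into $[0,1]^I$ via the family of all a-maps $X\to[0,1]$, using \Cref{t:eta in injective} for injectivity, the Closed Map Lemma for the topological embedding, and \Cref{t:eta preserves denominators} for denominator-faithfulness. Your additional remarks (on separation of points from closed sets, and on the two divisibility directions for $\den e(x)=\zeta(x)$) are correct but not needed, as you yourself note.
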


\begin{proof}
    The implication \ref{i:closed} $\Rightarrow$ \ref{i:compact} is obvious.
    The implication \ref{i:compact} $\Rightarrow$ \ref{i:a-normal} is given by \Cref{t:compact is a-normal}.
    We prove the implication \ref{i:a-normal} $\Rightarrow$ \ref{i:closed}.
    Consider the set $I$ of all a-maps from $X$ to $[0,1]$.
    By the universal property of the product, there exists a unique a-map $g \colon X \to [0,1]^I$ that factors all maps in $I$ through the projections.
    By \Cref{t:eta in injective}, for all $x \neq y$ there exists an a-map $f \colon X \to [0,1]$ such that $f(x) \neq f(y)$.
    This means that $g$ is injective.
    Since $g$ is a continuous map between compact Hausdorff spaces, $g$ is closed. Hence $g$ is a homeomorphism onto its image.
    By \Cref{t:eta preserves denominators}, for every $x \in X$ there exists an a-map $f \colon X \to [0,1]$ such that $\den{f(x)} = \zeta(x)$.
    Therefore, $\den{g(x)} = \zeta(x)$.
    In conclusion, $g$ is an isomorphism of a-spaces onto its image.
\end{proof}

\begin{corollary} \label{c:fixed-spaces}
    A compact a-space $(X,\zeta)$ is arithmetically normal if and only if the unit $\eta_X \colon X \to \Max{\C(X)}$ of the adjunction in \cref{p:specificadj} is an isomorphism of a-spaces.
\end{corollary}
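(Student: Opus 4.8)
The plan is simply to chain the two characterisation theorems already established, observing that they share a common clause. Both \Cref{t:easy char fixed points on geom side} and \Cref{t:char fixed points on geom side} are stated for an arbitrary compact a-space $(X,\zeta)$, and each lists among its equivalent conditions the geometric statement: \emph{there exist a set $I$ and a compact subset $K \seq \R^I$ such that $(X,\zeta)$ and $(K,\den)$ are isomorphic a-spaces} — this is item \ref{t:easy char:item1} of \Cref{t:easy char fixed points on geom side} and item \ref{i:compact} of \Cref{t:char fixed points on geom side}. Hence this clause serves as a bridge.

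Concretely, I would argue as follows. First, by \Cref{t:char fixed points on geom side} (the equivalence \ref{i:a-normal} $\Leftrightarrow$ \ref{i:compact}), the compact a-space $(X,\zeta)$ is a-normal if and only if it is a-isomorphic to $(K,\den)$ for some compact $K \seq \R^I$. Second, by \Cref{t:easy char fixed points on geom side} (the equivalence \ref{t:easy char:item1} $\Leftrightarrow$ \ref{t:easy char:item3}), the latter holds if and only if the unit $\eta_X \colon X \to \Max(\C(X))$ is an isomorphism of a-spaces. Composing these two equivalences yields the claim. Since $\eta_X$ is precisely the unit of the adjunction of \Cref{p:specificadj}, the corollary is proved.

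I expect no real obstacle here: all the substantive work — the arithmetic Urysohn's Lemma \Cref{t:Urysohn's Lemma for a-spaces}, its corollaries \Cref{t:eta in injective} and \Cref{t:eta preserves denominators}, and the embedding argument into a Tychonoff cube — has already been carried out in proving \Cref{t:char fixed points on geom side}, while \Cref{t:easy char fixed points on geom side} supplies the identification of the geometric fixed points with the ones fixed by the unit. The only point worth a word of care is that both theorems are indeed quantified over the same class (compact a-spaces), so the hypotheses match and the two equivalences can be concatenated verbatim.
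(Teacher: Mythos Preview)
Your proposal is correct and matches the paper's own proof essentially verbatim: the paper likewise derives the corollary by combining the equivalence \ref{i:a-normal} $\Leftrightarrow$ \ref{i:compact} of \Cref{t:char fixed points on geom side} with the equivalence \ref{t:easy char:item1} $\Leftrightarrow$ \ref{t:easy char:item3} of \Cref{t:easy char fixed points on geom side}.
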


\begin{proof}
    This follows from the equivalence \ref{i:a-normal} $\Leftrightarrow$ \ref{i:compact} in \cref{t:char fixed points on geom side} and the equivalence \ref{t:easy char:item1} $\Leftrightarrow$ \ref{t:easy char:item3} in \cref{t:easy char fixed points on geom side}.
\end{proof}

\section{The lattice-groups fixed by the adjunction} \label{s:fixedalgebras}

The aim of this section is to prove that a unital Abelian $\ell$-group $G$ is metrically complete if and only if there is a compact a-space $X$ such that $G$ is isomorphic to $\C(X)$ as a unital $\ell$-group; equivalently, if and only if the map $\eps_G \colon G \to \C(\Max{G})$ in \cref{p:specificadj} is an isomorphism of unital $\ell$-groups. 
Recall from the Introduction that, for any unital Abelian $\ell$-group $(G, 1)$, for $x\in G$, we write $\lvert x \rvert\df x\vee (-x)$; for $n \in \N$, we write $n$ to mean the $n$-fold sum of the unit $1$ of $G$; finally, for $p,q \in \N$ with $q>0$, and for $y \in G$, we write $x \leq \frac{p}{q}y$ as a shorthand for $\lm{q}x \leq \lm{p}y$. We recall also that an $\ell$-group $G$ is \emph{Archimedean} if, for every $x,y \in G$, whenever $\lm{n}x \leq y$ holds for every $n\in\N$, we have $x \leq 0$.

\begin{definition} \label{d:ell-norm}
In any unital Abelian $\ell$-group $(G, 1)$ we define $\lnorm{\,} \colon G \to [0,+\infty)$ by setting, for any $g\in G$, 
\[
    \lnorm{g} \df \inf \left\{ \frac{p}{q} \in \Q \mid p,q \in \N, q \neq 0\text{ and }\lvert g \rvert \leq \frac{p}{q}1 \right\}.
\]
We call $\lnorm{\,}$ the \emph{seminorm on $G$ induced by the unit}. 
We set:
\begin{equation} \label{eq:defn-metric}
    \dist(x,y) \df \lnorm{x-y},
\end{equation}
and we call $\dist$ the \emph{pseudometric induced by the unit}.
Each of the implications $\lnorm{g}=0 \Rightarrow g=0$ and $\dist(x,y)=0 \Rightarrow x=y$ holds precisely when $G$ is Archimedean, in which case we call $\lnorm{\,}$ the \emph{norm induced by the unit}, and $\dist$ the \emph{metric induced by the unit}.
\end{definition}
All these notions are standard in the literature on lattice-groups; see \cite{Goodbook} for further details. 

\begin{remark} \label{r:l-norm-is-sup-norm}
    It is well known (see, e.g., \cite[Theorem 4.14]{Goodbook}) that a unital Abelian $\ell$-group is Archimedean if and only if it is isomorphic to a subgroup of $\R^{X}$ for some set $X$.
    So, if $G$ is Archimedean, its elements can be thought of as functions from a set $X$ into $\R$. Under this interpretation, $\lnorm{\,}$ becomes the \emph{sup norm}: $\lnorm{f} = \sup \{ \lvert f(x) \rvert \mid x \in X \}$.
    We tacitly use this remark whenever needed in the paper. 
\end{remark}

\begin{remark}
    Let $(G,1)$ be a unital Abelian $\ell$-group.
    For every $g \in G$, $p \in \N$, $q \in \Np$, 
    \[
        \text{if } \lvert g \rvert \leq \frac{p}{q}1 \text{ then } \lnorm{g} \leq \frac{p}{q}.
    \]
    The converse implication holds when $G$ is Archimedean.
\end{remark}

The main fact needed to achieve the characterisation described at the beginning of this section is a generalisation of the Stone-Weierstrass Theorem that takes into account denominators (\Cref{t:StWe_a-spaces} below). To prove it we need the following preliminary results.

\begin{theorem} \label{t:StWe-figo-2-options}
    Let $X$ be a compact space, let $L$ be a set of continuous functions from $X$ to $\R$ that is closed under $\lor$ and $\land$, and suppose that either $X$ or $L$ is nonempty.
    The following conditions are equivalent for any function $f \colon X \to \R$.
    \begin{enumerate}
    	\item \label{i:unif-limit} The function $f$ is a uniform limit of a sequence in $L$.
    	\item \label{i:Stone} The function $f$ is continuous and, for all $x, y \in X$ and all $\eps > 0$, there exists $g \in L$ such that $\lvert f(x) - g(x) \rvert < \eps$ and $\lvert f(y) - g(y) \rvert < \eps$.
    	\item \label{i:up-down} The function $f$ is continuous, for all $x\neq y \in X$ and all $\eps > 0$ there exists $g \in L$ such that $g(x) > f(x) - \eps$ and $g(y) < f(y) + \eps$, and for all $z \in X$ the value $f(z)$ belongs to the closure of $\{ h(z) \mid h \in L \}$.
    \end{enumerate}
\end{theorem}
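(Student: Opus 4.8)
The plan is to prove the cycle of implications $\ref{i:unif-limit}\Rightarrow\ref{i:Stone}\Rightarrow\ref{i:up-down}\Rightarrow\ref{i:unif-limit}$, of which only the last carries real weight. First I would dispose of the degenerate case $X=\emptyset$: then the standing hypothesis forces $L\neq\emptyset$, so $L$ equals the one-element set of functions $\emptyset\to\R$, and $f$ is its unique member, whence all three conditions hold trivially. From here on I assume $X\neq\emptyset$.

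The implication $\ref{i:unif-limit}\Rightarrow\ref{i:Stone}$ is immediate: if $f$ is the uniform limit of a sequence $(g_n)$ in $L$, then $f$ is continuous and, for any $x,y\in X$ and $\eps>0$, any $g_n$ with $\sup_{z\in X}\lvert f(z)-g_n(z)\rvert<\eps$ witnesses \ref{i:Stone}. The implication $\ref{i:Stone}\Rightarrow\ref{i:up-down}$ is equally routine: continuity is part of \ref{i:Stone}; given $x\neq y$ and $\eps>0$ a $g\in L$ with $\lvert f(x)-g(x)\rvert<\eps$ and $\lvert f(y)-g(y)\rvert<\eps$ satisfies $g(x)>f(x)-\eps$ and $g(y)<f(y)+\eps$; and applying \ref{i:Stone} with both points equal to a given $z$ shows $f(z)\in\overline{\{h(z)\mid h\in L\}}$.

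For $\ref{i:up-down}\Rightarrow\ref{i:unif-limit}$ I would run the classical lattice-theoretic Stone--Weierstrass argument, invoking compactness twice. Fix $\eps>0$; it suffices to produce $h\in L$ with $\lvert f(z)-h(z)\rvert<\eps$ for all $z\in X$, and then let $\eps$ run through $\tfrac1n$. \emph{Step 1.} Fix $x\in X$; for each $y\in X$ choose $g_{x,y}\in L$ with $g_{x,y}(x)>f(x)-\eps$ and $g_{x,y}(y)<f(y)+\eps$: for $y\neq x$ this is the two-point clause of \ref{i:up-down}, and for $y=x$ it is the pointwise-closure clause, which gives $g_{x,x}$ with $\lvert g_{x,x}(x)-f(x)\rvert<\eps$. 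Each set $W_y\df\{z\in X\mid g_{x,y}(z)<f(z)+\eps\}$ is open and contains $y$, so by compactness $X=W_{y_1}\cup\dots\cup W_{y_n}$ with $n\geq 1$, and then $h_x\df g_{x,y_1}\land\dots\land g_{x,y_n}\in L$ satisfies $h_x(z)<f(z)+\eps$ for all $z$ and $h_x(x)>f(x)-\eps$. \emph{Step 2.} Each set $V_x\df\{z\in X\mid h_x(z)>f(z)-\eps\}$ is open and contains $x$, so by compactness $X=V_{x_1}\cup\dots\cup V_{x_m}$ with $m\geq 1$, and $h\df h_{x_1}\lor\dots\lor h_{x_m}\in L$ then satisfies $f(z)-\eps<h(z)<f(z)+\eps$ for every $z$. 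Letting $\eps=\tfrac1n$ yields a sequence in $L$ converging uniformly to $f$.

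The step I expect to require the most care is precisely $\ref{i:up-down}\Rightarrow\ref{i:unif-limit}$, and within it the observation that the \emph{one-sided} two-point estimate furnished by \ref{i:up-down} (close to $f$ from below at $x$, below $f+\eps$ at $y$), rather than the two-sided estimate of \ref{i:Stone}, still suffices for Step 1: the construction of $h_x$ uses $g_{x,y}$ only to cover a neighbourhood of $y$ via $g_{x,y}<f+\eps$ and to keep $g_{x,y}(x)>f(x)-\eps$, and the one leftover requirement --- covering the point $x$ itself, which needs some member $<f+\eps$ near $x$ --- is exactly what the closure-of-values clause supplies through $g_{x,x}$. I would also verify the small-case bookkeeping ($X$ a singleton, or empty finite subcovers when forming $h_x$ and $h$), all covered by the assumption that $X$ or $L$ is nonempty.
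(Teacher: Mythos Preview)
Your argument is correct, but it proceeds differently from the paper. The paper does not prove the cycle; instead it cites Stone's theorem for the equivalence \ref{i:unif-limit} $\Leftrightarrow$ \ref{i:Stone}, notes that \ref{i:Stone} $\Rightarrow$ \ref{i:up-down} is elementary, and then proves \ref{i:up-down} $\Rightarrow$ \ref{i:Stone} by a purely pointwise construction: given $x\neq y$, it picks $h,k\in L$ close to $f$ at $x$ and at $y$ respectively (from the closure clause), and then, according to the signs of $k(x)-f(x)$ and $h(y)-f(y)$, combines $h$, $k$, and possibly the one-sided witness $l$ from the two-point clause of \ref{i:up-down} via $\land$ and $\lor$ (e.g.\ $g=(h\land l)\lor k$) to obtain the two-sided estimate of \ref{i:Stone}. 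By contrast, you bypass \ref{i:Stone} on the return trip and establish \ref{i:up-down} $\Rightarrow$ \ref{i:unif-limit} directly by rerunning the classical two-step compactness argument, observing that the one-sided two-point estimate together with the closure-of-values clause already supplies the needed $g_{x,y}$ at every pair, including $y=x$. Your route is self-contained (no black-box citation of Stone) and makes transparent exactly which half of the two-sided estimate the compactness argument actually consumes; the paper's route is shorter if one takes Stone's theorem as known and has the merit of isolating the genuinely new content as a local, three-function lattice combination upgrading \ref{i:up-down} to \ref{i:Stone}.
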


\begin{proof}
    The equivalence \ref{i:unif-limit} $\Leftrightarrow$ \ref{i:Stone} is due to M.\ H.\ Stone \cite[Theorem~1]{Stone1948}.
    The implication \ref{i:Stone} $\Rightarrow$ \ref{i:up-down} is elementary.
    Let us prove the implication \ref{i:up-down} $\Rightarrow$ \ref{i:Stone}.
    Assume \ref{i:up-down}.
    In case $x = y$, condition \ref{i:Stone} holds because there exists $h \in L$ such that $\lvert f(x) - h(x) \rvert < \eps$.
    Suppose $x \neq y$.
    Then there exists $h \in L$ such that $\lvert f(x) - h(x) \rvert < \eps$, and there exists $k \in L$ such that $\lvert f(y) - k(y) \rvert < \eps$.
    We complete the proof arguing by cases.
    \begin{enumerate}[wide]
    	\item Case $k(x) \geq f(x)$ and $h(y) \geq f(y)$. It suffices to take $g = h \land k$.
    	\item Case $k(x) \leq f(x)$ and $h(y) \leq f(y)$. It suffices to take $g = h \lor k$.
    	\item Case $k(x) \leq f(x)$ and $h(y) \geq f(y)$. Since $x \neq y$, by hypothesis there exists $l \in L$ such that $l(x) > f(x) - \eps$ and $l(y) < f(y) + \eps$. It now suffices to take $g = (h \land l) \lor k$.
    	\item Case $k(x) \geq f(x)$ and $h(y) \leq f(y)$. This case is similar to the previous one. \qedhere
    \end{enumerate}
\end{proof}

\begin{proposition} \label{p:den_and_closure}
    Let $G$ be a subgroup of $\R$ containing $1$. The closure of $G$ in $\R$ with respect to the Euclidean topology coincides with the set
    \[
        \{ r \in \R \mid \den{r} \text{ divides } \lcm\{\den g\mid g\in G\} \}.
    \]
\end{proposition}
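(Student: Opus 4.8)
The plan is to reduce everything to the classical dichotomy for subgroups of $\R$: such a subgroup is either dense or infinite cyclic, i.e.\ of the form $\alpha\Z$ for some $\alpha \geq 0$. Before invoking this, I would dispose of the easy inclusion. Write $m \df \lcm\{\den g \mid g \in G\}$ and let $S$ denote the set on the right-hand side of the statement. Each $\den g$ divides $m$ by the very definition of the least common multiple (it is an upper bound, in the divisibility order, of the set $\{\den g\mid g\in G\}$ which contains $\den g$), so $G \seq S$. Moreover $S$ is closed in $\R$: if $m = 0$ then $\den r$ divides $0$ for every $r$, so $S = \R$; if $m \geq 1$ then $S = \frac{1}{m}\Z$, a discrete — hence closed — subset of $\R$ (this is the same elementary computation used in the proof that compact subspaces of $\R^I$ are a-normal). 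Therefore $\overline{G} \seq S$, and it remains to prove the reverse inclusion.

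For $S \seq \overline G$ I would argue by cases on the dichotomy. If $G$ is dense, then $\overline G = \R$; and $m$ must equal $0$, since $m \geq 1$ would force $G \seq \frac{1}{m}\Z$, contradicting density. Hence $S = \R = \overline G$. If instead $G = \alpha\Z$, then, since $1 \in G$ and $G \neq \{0\}$, we have $\alpha > 0$ and $1/\alpha \in \Np$, so $G = \frac{1}{n}\Z$ with $n \df 1/\alpha$. A short bookkeeping step with the denominators of the fractions $k/n$ ($k \in \Z$) shows that $\{\den g \mid g \in G\}$ is exactly the set of divisors of $n$ — in particular $\den(1/n) = n$ — so $m = n$. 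Consequently $S = \frac{1}{n}\Z = G = \overline G$, the last equality because $\frac{1}{n}\Z$ is closed. In both cases $\overline G = S$, which is the assertion.

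I do not expect a real obstacle here: the only non-routine ingredient is the structure theorem for subgroups of $\R$, and the only subtlety is keeping track of the convention that $0$ is the top element of $(\N,\mid)$, so that the clause ``$\den r$ divides $m$'' imposes no constraint precisely when $m = 0$ — i.e.\ precisely when $G$ is dense. If a self-contained argument is preferred, the structure theorem is immediate from examining $\alpha \df \inf\{g \in G \mid g > 0\}$: when $\alpha > 0$ it belongs to $G$ (otherwise two elements of $G$ in $(\alpha, 2\alpha)$ would have difference in $(0,\alpha)$) and generates $G$, while when $\alpha = 0$ the group meets every nonempty open interval.
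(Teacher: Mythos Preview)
Your proof is correct and follows essentially the same approach as the paper: both invoke the dichotomy that a subgroup of $\R$ containing $1$ is either dense or equal to $\frac{1}{n}\Z$ for some $n\in\Np$, and then handle each case by computing the $\lcm$ of denominators. You are a bit more explicit in isolating the easy inclusion $\overline{G}\seq S$ and in verifying $m=n$ in the discrete case, but the substance is the same.
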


\begin{proof}
    By \cite[Lemma 4.21]{Goodbook}, either $G = \Zmod$ for a uniquely determined $n \in \Np$, or $G$ is dense. If the former holds, then $G$ is closed, and direct inspection shows that $\lcm\{\den g\mid g\in G\} = n$. Hence, for $r \in \R$, $\den{r}$ divides $n$ precisely when $r$ is rational and it may be displayed in reduced form as $\frac{p}{q}$ with $q$ a divisor of $n$; that is, precisely when $r \in \Zmod$. On the other hand, if $G$ is dense then the set of denominators of its elements is unbounded, whence $\lcm\{\den g\mid g\in G\} = 0$.
    Since every $d \in \N$ divides $0$, the proof is complete.
\end{proof}

\begin{theorem} \label{t:StWe-1}
    Let $X$ be a compact space and let $G$ be a unital $\ell$-subgroup of the set of continuous functions from $X$ to $\R$.
    Suppose that for all $x \neq y \in X$ there exists $g \in G$ such that $g(x) \neq g(y)$.
    Then a function $f \colon X \to \R$ is a uniform limit of a sequence in $G$ if and only if $f$ is continuous and, for all $x \in X$, $f(x)$ belongs to the closure of $\{ g(x) \mid g \in G \}$.
\end{theorem}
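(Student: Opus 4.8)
The plan is to derive the statement from the lattice Stone--Weierstrass theorem of \Cref{t:StWe-figo-2-options} applied with $L\df G$. This is legitimate: as a unital $\ell$-subgroup of $\Cc(X)$, $G$ is in particular a sublattice of $\Cc(X)$, hence closed under $\lor$ and $\land$, and it is nonempty since $1\in G$.

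The forward implication is the easy half. If $f$ is the uniform limit of a sequence $(g_n)_{n\in\N}$ in $G$, then $f$ is continuous as a uniform limit of continuous functions; and for each fixed $x\in X$ the real sequence $g_n(x)$ converges to $f(x)$, with each $g_n(x)\in\{g(x)\mid g\in G\}$, so $f(x)$ lies in the closure of that set.

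For the converse, suppose $f$ is continuous and $f(x)$ lies in the closure of $\{g(x)\mid g\in G\}$ for every $x\in X$. I plan to verify condition \ref{i:up-down} of \Cref{t:StWe-figo-2-options} (with $L=G$) for the function $f$; by the equivalence \ref{i:up-down}$\Leftrightarrow$\ref{i:unif-limit} this gives exactly the desired conclusion. Of the three clauses comprising \ref{i:up-down}, continuity of $f$ and the pointwise requirement that $f(z)$ lie in the closure of $\{h(z)\mid h\in G\}$ for all $z\in X$ are precisely our hypotheses; so the whole content is to establish the following: \emph{for all $x\neq y$ in $X$ and all $\eps>0$ there exists $g\in G$ with $g(x)>f(x)-\eps$ and $g(y)<f(y)+\eps$.}

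This is the step I expect to be the crux, and it is the only place where the point-separation hypothesis is used. Fix $x\neq y$ and $\eps>0$, and set $C_1\df f(x)-\eps$ and $C_2\df f(y)+\eps$. By the separation hypothesis choose $h\in G$ with $h(x)\neq h(y)$; replacing $h$ by $-h\in G$ if necessary, we may assume $\delta\df h(x)-h(y)>0$. Pick $n\in\Np$ large enough that $C_2-C_1+n\delta>1$. Since $\Z\cdot 1\seq G$ and $nh\in G$, every function of the form $m+nh$ with $m\in\Z$ belongs to $G$. The open interval $(C_1-nh(x),\,C_2-nh(x)+n\delta)$ has length $C_2-C_1+n\delta>1$, hence contains an integer $m$; for this $m$ the function $g\df m+nh\in G$ satisfies $g(x)=m+nh(x)>C_1=f(x)-\eps$ and $g(y)=m+nh(y)=m+nh(x)-n\delta<C_2=f(y)+\eps$, as required. (Observe that we do not need $g$ to approximate $f$ at $x$ or at $y$; only the two one-sided inequalities are needed.) Having verified \ref{i:up-down}, \Cref{t:StWe-figo-2-options} yields that $f$ is a uniform limit of a sequence in $G$, which completes the proof.
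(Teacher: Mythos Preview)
Your proof is correct and follows essentially the same strategy as the paper: both reduce to the equivalence \ref{i:unif-limit}$\Leftrightarrow$\ref{i:up-down} of \Cref{t:StWe-figo-2-options} with $L=G$, and the only substantive work is constructing, for $x\neq y$, an element of $G$ of the form (integer)$\cdot h+$(integer) meeting the two one-sided bounds. The paper's construction picks a rational $\frac{a}{b}$ strictly between $h(x)$ and $h(y)$ and solves for suitable $m,q\in\Z$, obtaining the slightly stronger conclusion $mg(x)+q\geq f(x)$ and $mg(y)+q\leq f(y)$; your construction instead scales $h$ by a large $n$ so that an interval of length $>1$ appears and then picks an integer in it, which is a touch more transparent but yields only the $\eps$-weakened inequalities---exactly what \ref{i:up-down} requires, so nothing is lost.
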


\begin{proof}
    The statement easily follows from the equivalence \ref{i:unif-limit} $\Leftrightarrow$ \ref{i:up-down} in \Cref{t:StWe-figo-2-options}. For the right-to-left implication we explicitly check that condition \ref{i:up-down} in \Cref{t:StWe-figo-2-options} is satisfied. The function $f$ is continuous and, for all $x \in X$, $f(x)$ belongs to the closure of $\{ g(x) \mid g \in G \}$ by hypothesis.
    Let $x \neq y \in X$.
    By assumption there exists $g\in G$ such that $g(x)\neq g(y)$. 
    We claim that there exist integers $m$ and $q$ such that $mg(y) + q\leq f(y)$ and $mg(x) + q \geq f(x)$. 
    Indeed, pick a rational number $\frac{a}{b}$ in the real interval whose endpoints are $g(x)$ and $g(y)$ (in whichever order they are).
    If $g(y) < \frac{a}{b} < g(x)$, pick $m \in \Z$ such that $b$ divides $m$ and
    \[
    m\geq \max \left\{ \frac{f(y)}{g(y)-\frac{a}{b}},\frac{f(x)}{g(x) - \frac{a}{b}} \right\}.
    \]
    If $g(x) < \frac{a}{b} < g(y)$, pick $m \in \Z$ such that $b$ divides $m$ and
    \[
    m \leq \min \left\{ \frac{f(y)}{g(y)-\frac{a}{b}},\frac{f(x)}{g(x) - \frac{a}{b}} \right\}.
    \]
    In both cases, set $q \coloneqq - m\frac{a}{b}$, and note that $q \in \Z$, $mg(y) + q \leq f(x)$ and $mg(x) + q \geq f(x)$.
    This proves our claim.
    Therefore, for any $\eps > 0$, $mg(x) + q> f(x) - \eps$ and $mg(y) + q < f(y) + \eps$. 
    Since $mg + q \in G$, condition \ref{i:up-down} is satisfied. 
    The proof is complete.
\end{proof}

\begin{theorem}[Stone-Weierstrass for unital Abelian $\ell$-groups]\label{t:StWe_a-spaces}
    Let $X$ be a compact a-space, let $G$ be a unital $\ell$-subgroup of $\C(X)$, and suppose that the following conditions hold.
    \begin{enumerate}
        \item \label{i:sep} For every $x \neq y \in X$, there exists $g \in G$ such that $g(x) \neq g(y)$.
        \item \label{i:pres-den} For every $x \in X$, $\zeta(x) = \den(g(x))_{g \in G}$.
    \end{enumerate} 
    Then $G$ is dense in $\C(X)$ with respect to the uniform metric.
\end{theorem}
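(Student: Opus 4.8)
The plan is to reduce the statement to an application of \Cref{t:StWe-1}, which already handles the case of a unital $\ell$-subgroup of $\Cc(X)$ that separates points: there, $f$ is a uniform limit of a sequence in $G$ if and only if $f$ is continuous and, for each $x \in X$, $f(x)$ lies in the closure of $\{g(x) \mid g \in G\}$. Since $\C(X) \seq \Cc(X)$ and hypothesis \ref{i:sep} is exactly the separation assumption, it suffices to show that for every $f \in \C(X)$ and every $x \in X$, the value $f(x)$ belongs to the closure of $G_x \df \{g(x) \mid g \in G\}$. The uniform limit produced by \Cref{t:StWe-1} is then automatically in $\C(X)$ (it decreases denominators because each term does and $[\dots]$), so density in $\C(X)$ follows.

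The heart of the matter is therefore the pointwise density claim. First I would observe that $G_x$ is a subgroup of $\R$ containing $1$ (it is the image of $G$ under the evaluation $\ev_x$, which is a unital group homomorphism). Hence \Cref{p:den_and_closure} applies and tells us that the closure of $G_x$ in $\R$ is precisely $\{r \in \R \mid \den{r} \text{ divides } \lcm\{\den g' \mid g' \in G_x\}\}$. Now $\lcm\{\den g' \mid g' \in G_x\} = \lcm\{\den(g(x)) \mid g \in G\}$, which by hypothesis \ref{i:pres-den} equals $\zeta(x)$. On the other hand, $f \in \C(X)$ is an a-map $X \to \R$, so $\den{f(x)}$ divides $\zeta(x)$. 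Combining these, $\den{f(x)}$ divides $\lcm\{\den g' \mid g' \in G_x\}$, which by \Cref{p:den_and_closure} means exactly that $f(x)$ lies in the closure of $G_x$. This holds for every $x \in X$, so the hypotheses of \Cref{t:StWe-1} are met.

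Applying \Cref{t:StWe-1} yields a sequence $(g_n)_{n \in \N}$ in $G$ converging uniformly to $f$. Since the uniform metric on $\C(X)$ is the restriction of that on $\Cc(X)$, this shows $f$ lies in the closure of $G$ inside $\C(X)$. As $f \in \C(X)$ was arbitrary, $G$ is dense in $\C(X)$, completing the proof.

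I do not expect a serious obstacle here: the real work has been front-loaded into \Cref{t:StWe-1} and \Cref{p:den_and_closure}, and what remains is the bookkeeping that identifies $\lcm\{\den(g(x)) \mid g \in G\}$ with $\zeta(x)$ via hypothesis \ref{i:pres-den} and the observation that $\C(X) \seq \Cc(X)$ isometrically. The one point requiring a line of care is noting that $\ev_x \colon G \to \R$ has image a subgroup of $\R$ containing $1$ (so that \Cref{p:den_and_closure} is applicable); this is immediate since $\ev_x$ is a unital $\ell$-homomorphism. Everything else is a direct substitution.
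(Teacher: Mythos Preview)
Your proof is correct and follows essentially the same route as the paper: reduce to \Cref{t:StWe-1}, then use \Cref{p:den_and_closure} together with hypothesis \ref{i:pres-den} to verify the pointwise-closure condition. One minor remark: the parenthetical about the uniform limit lying in $\C(X)$ (with the placeholder $[\dots]$) is unnecessary, since the limit in question is $f$ itself, which is in $\C(X)$ by assumption; you can simply drop that sentence.
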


\begin{proof}
    In light of \Cref{t:StWe-1}, it suffices to show that, for every $f \in \C(X)$, the value $f(x)$ belongs to the closure of $\{ g(x) \mid g \in G \}$. Since $G_x\df\{ g(x) \mid g \in G \}$ is a subgroup of $\R$ containing $1$, by \Cref{p:den_and_closure}, the closure of $G_x$ is
    \begin{equation}\label{eq:closure-of-G_x:1}
        \left\{ r \in \R \mid \den{r} \text{ divides } \lcm { \{ \den{g(x)} \mid g \in G \} } \right\}.
    \end{equation}
    By assumption, $\zeta(x) = \den{ ( g(x) )_{ g \in G } } = \lcm { \{ \den{g(x)} \mid g \in G \} }$.
    Hence the set in \eqref{eq:closure-of-G_x:1} coincides with
    \begin{equation}\label{eq:closure-of-G_x:2}
    \left\{ r \in \R \mid \den{r} \text{ divides } \zeta(x) \right\}.
    \end{equation}
    Since $f$ is an a-map, $\den{f(x)} \text{ divides } \zeta(x)$, and hence $f(x)$ is in the set in \eqref{eq:closure-of-G_x:2}, as was to be shown.
\end{proof}

For the proof of \Cref{t:fixedalgebras} below we need to recall the fundamental property of unital Archimedean $\ell$-groups---namely, the existence of enough morphisms to the reals.

\begin{lemma}[See e.g.\ \mbox{\cite[Theorem 4.14]{Goodbook}}] \label{l:enoughhomstoR}
    A unital $\ell$-group $G$ is Archimedean if and only if for every $0 \neq x \in G$ there exists a unital $\ell$-homomorphism $h \colon G \to \R$ such that $h(x) \neq 0$.
\end{lemma}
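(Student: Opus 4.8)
Here is the argument I would give.

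The implication from right to left is immediate: if for every $0 \neq x \in G$ there is a unital $\ell$-homomorphism $h \colon G \to \R$ with $h(x) \neq 0$, then the collection of all unital $\ell$-homomorphisms $G \to \R$ separates the points of $G$, hence assembles into an injective $\ell$-group homomorphism from $G$ into a power of $\R$; since every power of $\R$ is Archimedean and every sub-$\ell$-group of an Archimedean $\ell$-group is Archimedean, $G$ is Archimedean. For the converse, let $G$ be Archimedean with unit $u$ and fix $0 \neq x \in G$. Because $h(\lvert x\rvert) = \lvert h(x)\rvert$ for every unital $\ell$-homomorphism $h$, I may replace $x$ by $\lvert x\rvert$ and assume $x > 0$. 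The plan is then to produce an $\ell$-ideal $M$ of $G$ such that $G/M$ is totally ordered, the image $\bar x$ of $x$ is nonzero, and $G/M$ becomes Archimedean once one collapses its elements that are infinitesimal relative to the order unit $\bar u$, i.e.\ modulo the convex subgroup $I \df \{\, z \in G/M \mid n\lvert z\rvert \leq \bar u \text{ for all } n \in \N \,\}$. Granting this, H\"older's theorem embeds the Archimedean totally ordered group $(G/M)/I$ into $\R$ as an ordered group; such an embedding is automatically a lattice homomorphism (an order-preserving map between totally ordered groups preserves $\vee$ and $\wedge$); rescaling so that the image of $\bar u$ is $1$ makes it unital; and the composite $G \twoheadrightarrow G/M \twoheadrightarrow (G/M)/I \hookrightarrow \R$ is then a unital $\ell$-homomorphism not vanishing at $x$.

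To obtain $M$, I would first exploit the Archimedean hypothesis to manufacture the right element to exclude. Since $x > 0$, it is not the case that $nx \leq u$ for all $n \in \N$, so there is $b \in \Np$ such that $bx \leq u$ fails; put $y \df (bx - u) \vee 0$, so that $0 < y \leq bx$. By Zorn's lemma, choose an $\ell$-ideal $M$ maximal with respect to $y \notin M$. A standard argument shows $M$ is prime --- if $c \land d \in M$ with $c, d \geq 0$ but $c, d \notin M$, then the $\ell$-ideals generated by $M \cup \{c\}$ and by $M \cup \{d\}$ each properly contain $M$, hence each contains $y$, and combining the two resulting inequalities forces $y \in M$ --- so $G/M$ is totally ordered. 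From $0 < y \leq bx$ and $y \notin M$ we get $\bar x > 0$, and $\bar y = (b\bar x - \bar u) \vee 0 > 0$ gives $b\bar x > \bar u$, so $\bar x \notin I$. Finally, $(G/M)/I$ is totally ordered, the image of $\bar u$ is still an order unit, the image of $\bar x$ is still nonzero, and a short computation --- using precisely that $\bar u$ is an order unit --- shows $(G/M)/I$ is Archimedean, completing the plan.

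I expect the main obstacle to be exactly the step where the Archimedean hypothesis on $G$ is invoked: being Archimedean is not inherited by arbitrary quotients, so there is no reason for the ``obvious'' choice of $M$ (a maximal $\ell$-ideal avoiding $x$) to yield an Archimedean quotient, and indeed it need not. The way around this is to arrange for the image of $x$ to stay bounded below by a positive rational multiple of $\bar u$ --- which is why one excludes $y = (bx - u)\vee 0$ rather than $x$ itself --- and then to collapse the ``infinitesimal'' part. (For the non-Archimedean $\ell$-group consisting of $\Z \times \Z$ with the lexicographic order, unit $(1,0)$ and $x = (0,1)$, no such $y$ exists and no unital $\ell$-homomorphism detects $x$, which shows the hypothesis really is needed.) Everything else is routine bookkeeping of the kind found in the $\ell$-group literature; alternatively, the statement may simply be quoted from \cite[Theorem 4.14]{Goodbook}.
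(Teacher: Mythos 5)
Your argument is correct, but note that the paper does not prove this lemma at all: it is imported wholesale from Goodearl \cite[Theorem~4.14]{Goodbook}, where it appears (in the equivalent guise ``Archimedean iff embeddable in a power of $\R$'') within the general machinery of partially ordered Abelian groups. What you supply instead is the classical self-contained route, essentially Yosida's argument: reduce to $x=|x|>0$, use the Archimedean hypothesis to produce $y=(bx-u)\vee 0>0$, take a value $M$ of $y$ (an $\ell$-ideal maximal for $y\notin M$, which is prime, so $G/M$ is totally ordered), kill the $\bar u$-infinitesimals, and apply H\"older's theorem, rescaled so that $\bar u\mapsto 1$. All the steps you leave as ``standard'' really are standard and check out: primeness of $M$ rests on the inequality $k_1c\wedge k_2d\leq k_1k_2(c\wedge d)$ for $c,d\geq 0$ together with convexity of $M$; the set $I$ of infinitesimals is a convex subgroup of the totally ordered group $G/M$ (hence an $\ell$-ideal); the quotient $(G/M)/I$ is Archimedean because any $\bar z\notin I$ with $\bar z>0$ satisfies $k\bar z>\bar u$ for some $k$, which is incompatible with $n\bar z\leq \bar u + \iota\leq 2\bar u$ for all $n$; and your choice of $y$ (rather than $x$) is exactly what guarantees $b\bar x>\bar u$, so that $\bar x$ survives the collapse of $I$. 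The converse direction via embedding into $\R^{X}$ is immediate, and your lexicographic $\Z\times\Z$ example correctly illustrates why the Archimedean hypothesis cannot be dropped. In short: the paper buys brevity by quoting Goodearl; your version buys self-containedness at the cost of invoking H\"older's theorem and a couple of routine $\ell$-group identities, and either is acceptable here.
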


\begin{theorem} \label{t:fixedalgebras}
    For every unital Abelian $\ell$-group $G$, the following are equivalent.
    \begin{enumerate}
        \item \label{i:complete} $G$ is metrically complete.
        \item \label{i:iso} For some compact a-space $X$, $G$ is isomorphic to $\C(X)$ as a unital $\ell$-group.
        \item \label{i:iso-eps} The map $\eps_G \colon G \to \C(\Max{G})$ in \cref{p:specificadj} is an isomorphism of unital $\ell$-groups.
    \end{enumerate}
\end{theorem}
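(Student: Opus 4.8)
The plan is to prove the cycle of implications $\ref{i:iso-eps} \Rightarrow \ref{i:iso} \Rightarrow \ref{i:complete} \Rightarrow \ref{i:iso-eps}$. The first implication is immediate, since $\Max{G}$ is a compact a-space by \Cref{r:MaxG-is-compact}, so $\C(\Max{G})$ is of the required form. For $\ref{i:iso} \Rightarrow \ref{i:complete}$, suppose $G \cong \C(X)$ for a compact a-space $X$; it suffices to show that $\C(X)$ is metrically complete. Since $\C(X)$ is a unital $\ell$-subgroup of $\Cc(X)$ and the latter is Archimedean (being a subgroup of $\R^X$), $\C(X)$ is Archimedean, so its unit-induced norm coincides with the sup norm by \Cref{r:l-norm-is-sup-norm}. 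Completeness then follows once we check that a uniform limit $f$ of a Cauchy sequence $(g_n)$ in $\C(X)$ again lies in $\C(X)$: the limit $f$ is continuous by the usual argument, and for each $x \in X$ we have $\den{f(x)}$ dividing $\zeta(x)$ because $f(x)$ is a limit of the $g_n(x)$, all of which lie in the closed subgroup $\{r \in \R \mid \den{r} \text{ divides } \zeta(x)\}$ of $\R$ (this set is closed by \Cref{p:den_and_closure}, or directly because it is either $\R$ or $\frac1n\Z$). Hence $f$ decreases denominators, so $f \in \C(X)$ and $\C(X)$ is complete.

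The substantive implication is $\ref{i:complete} \Rightarrow \ref{i:iso-eps}$, and the main obstacle is showing that $\eps_G$ is surjective — injectivity being a direct consequence of \Cref{l:enoughhomstoR}, since $G$ Archimedean implies that for $0 \neq x \in G$ there is a unital $\ell$-homomorphism separating $x$ from $0$, i.e.\ a point of $\Max{G}$ at which $\ev_x$ is nonzero. For surjectivity, set $X \df \Max{G}$, a compact a-space, and identify $G$ with its image $\eps_G[G] = \{\ev_g \mid g \in G\} \seq \C(X)$. The plan is to apply \Cref{t:StWe_a-spaces} to this unital $\ell$-subgroup of $\C(X)$. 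Its hypothesis \ref{i:sep} holds because distinct points of $\Max{G}$ are distinct homomorphisms $G \to \R$, so some $g \in G$ has $x(g) \neq y(g)$, i.e.\ $\ev_g(x) \neq \ev_g(y)$. For hypothesis \ref{i:pres-den}, note that for $x \in \Max{G}$ the denominator $\zeta(x)$ is by \Cref{d:Max}\eqref{d:Max:3} exactly $\lcm\{\den{x(g)} \mid g \in G\} = \den((\ev_g(x))_{g \in G})$, which is precisely what \ref{i:pres-den} demands. \Cref{t:StWe_a-spaces} then yields that $\eps_G[G]$ is dense in $\C(X)$ in the uniform metric.

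It remains to upgrade density to equality. Here we use that $G$ is metrically complete: the map $\eps_G \colon G \to \eps_G[G]$ is an isometry with respect to the unit-induced (sup) norms — indeed $\lnorm{g}$ in $G$ equals $\sup_{x \in \Max G}\lvert x(g)\rvert = \lnorm{\ev_g}$, again using \Cref{l:enoughhomstoR} to see that $G$ has enough homomorphisms to realise the supremum. Since $G$ is complete, $\eps_G[G]$ is a complete metric subspace of $\C(X)$, hence closed; being also dense, it equals $\C(X)$. Therefore $\eps_G$ is a bijective unital $\ell$-homomorphism, i.e.\ an isomorphism of unital $\ell$-groups, which is \ref{i:iso-eps}. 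This closes the cycle.

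Stylistically I would present the proof in exactly this order, flagging that the only delicate points are (a) the closed-subgroup argument for preservation of completeness under uniform limits in $\ref{i:iso}\Rightarrow\ref{i:complete}$, which leans on \Cref{p:den_and_closure}, and (b) verifying the two hypotheses of the Stone–Weierstrass theorem \Cref{t:StWe_a-spaces} in $\ref{i:complete}\Rightarrow\ref{i:iso-eps}$, which reduce to unwinding the definition of the denominator function on $\Max{G}$. Everything else — continuity of uniform limits, the isometry property of $\eps_G$, and closedness of complete subspaces — is routine.
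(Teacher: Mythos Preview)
Your proof is correct and follows essentially the same route as the paper's: the same cycle of implications, the same closed-subgroup argument for $\ref{i:iso}\Rightarrow\ref{i:complete}$, and the same use of \Cref{l:enoughhomstoR} for injectivity followed by \Cref{t:StWe_a-spaces} for density in $\ref{i:complete}\Rightarrow\ref{i:iso-eps}$. The only difference is that you spell out explicitly why $\eps_G$ is an isometry before concluding that its complete image is closed, whereas the paper simply writes ``since $G$ is metrically complete, $G = \C(\Max G)$''---your extra sentence is justified and harmless.
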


\begin{proof}
    The implication \ref{i:iso-eps} $\Rightarrow$ \ref{i:iso} is immediate. 
    
    For the implication \ref{i:iso} $\Rightarrow$ \ref{i:complete}, we prove that $\C(X)$ is metrically complete. It is clear that $\C(X)$ is Archimedean. A Cauchy sequence $\succustom{f_n}[n][\N]$ of elements of $\C(X)$ is also a Cauchy sequence in $\Cc(X)$, and it is classical that $\Cc(X)$ is complete in the uniform metric.  Let then $f$ be the limit of $\succustom{f_n}[n][\N]$ in $\Cc(X)$. Let $x \in X$ be such that $\zeta(x) > 0$.  We have $f_n(x) \in \Zmod[\zeta(x)]$ for any $n \in \N$ because each $f_n$ is an a-map. Further, $\frac{1}{\zeta(x)} \Z$ is closed in $\R$ and therefore $\lim_{n\to \infty}f_n(x)=f(x) \in \Zmod[\zeta(x)]$.  Thus, $f$ is an a-map. 
    
    To prove \ref{i:complete} $\Rightarrow$ \ref{i:iso-eps}, we first show $\ker{\eps_G} = \{0\}$. If $0\neq g \in G$, then by \Cref{l:enoughhomstoR} there is a unital $\ell$-homomorphism $h \colon G\to \R$ such that $h(g) \neq 0$. Hence $\ev_g(h) \neq 0$, and thus $0\neq \ev_g\in \C(\Max{G})$; therefore, $g\not \in \ker{\eps_G}$. Having established that $\eps_G$ is injective, let us identify $G$ with the image of $\eps_G$, a unital $\ell$-subgroup of $\C(\Max{G})$. Inspection of the definitions shows that conditions \ref{i:sep} and \ref{i:pres-den} in \Cref{t:StWe_a-spaces} hold. Therefore, $G$ is dense in $\C(\Max{G})$. Since $G$ is metrically complete, $G = \C(\Max{G})$.
\end{proof}

\begin{theorem} \label{t:MAIN}
    The dual adjunction in \cref{p:specificadj} descends to a duality between the category $\cG$ of metrically complete $\ell$-groups and the category $\KHz$ of normal a-spaces and a-maps between them.
\end{theorem}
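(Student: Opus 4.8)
The plan is to deduce the statement purely formally from the two ``fixed-object'' characterisations already established, together with the standard categorical fact that every (dual) adjunction cuts down to a (dual) equivalence between the full subcategories of objects on which the unit, respectively the counit, is an isomorphism. There is essentially nothing new to prove here; the work is bookkeeping, and the substantive content has already been discharged in \Cref{c:fixed-spaces} (which rests on the arithmetic Urysohn Lemma, \Cref{t:Urysohn's Lemma for a-spaces}) and in \Cref{t:fixedalgebras} (which rests on the arithmetic Stone--Weierstrass Theorem, \Cref{t:StWe_a-spaces}).

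First I would spell out the categorical principle for the adjunction $\Max \colon \Gr \to \Kz$, $\C \colon \Kz \to \Gr$ of \Cref{p:specificadj}, with units $\eta_X \colon X \to \Max\C(X)$ and $\eps_G \colon G \to \C\Max{G}$. The triangle identities give $\C(\eta_X) \circ \eps_{\C(X)} = \mathrm{id}_{\C(X)}$ and $\Max(\eps_G) \circ \eta_{\Max{G}} = \mathrm{id}_{\Max{G}}$; hence if $\eta_X$ is an isomorphism of a-spaces then $\C(\eta_X)$, and therefore $\eps_{\C(X)}$, is an isomorphism of unital $\ell$-groups, and dually if $\eps_G$ is an isomorphism then $\eta_{\Max{G}}$ is one. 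Writing $\mathcal{K}_0 \seq \Kz$ and $\mathcal{G}_0 \seq \Gr$ for the full subcategories of objects at which $\eta$, respectively $\eps$, is invertible, this shows that $\C$ sends objects of $\mathcal{K}_0$ to objects of $\mathcal{G}_0$ and $\Max$ sends objects of $\mathcal{G}_0$ to objects of $\mathcal{K}_0$; since $\eta$ restricts to a natural isomorphism $\mathrm{id} \Rightarrow \Max\C$ on $\mathcal{K}_0$ and $\eps$ restricts to a natural isomorphism $\mathrm{id} \Rightarrow \C\Max$ on $\mathcal{G}_0$, the restricted (contravariant) functors are mutually quasi-inverse, hence give a duality $\mathcal{G}_0 \simeq \mathcal{K}_0^{\op}$. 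This passage to the fixed objects is the standard one recorded in \cite{PorstTholen,DimovTholen}, so I would either cite it or reproduce the three-line argument just sketched.

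Then I would identify the two subcategories. On the space side, \Cref{c:fixed-spaces} says that a compact a-space $X$ lies in $\mathcal{K}_0$ exactly when it is a-normal; since a-normal spaces are compact Hausdorff by \ref{d:anormal-space:item1}, $\mathcal{K}_0$ is precisely the full subcategory of $\Kz$ on the a-normal spaces, i.e.\ the category of a-normal spaces and a-maps. On the algebra side, the equivalence \ref{i:complete} $\Leftrightarrow$ \ref{i:iso-eps} of \Cref{t:fixedalgebras} says that a unital Abelian $\ell$-group $G$ lies in $\mathcal{G}_0$ exactly when it is metrically complete, i.e.\ $\mathcal{G}_0 = \cG$. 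Substituting these two identifications into the duality of the previous paragraph yields the claim.

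As for the main obstacle: there really is none of substance, and the only point that requires a little care is keeping the variance straight when invoking the triangle identities, so that the subcategory cut out on the $\Kz$ side by invertibility of $\eta$ is matched with the one cut out on the $\Gr$ side by invertibility of $\eps$.
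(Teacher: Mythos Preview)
Your proposal is correct and follows exactly the paper's approach: the paper's proof is the one-liner ``By \cref{c:fixed-spaces} and \cref{t:fixedalgebras}'', and you have simply spelled out the standard categorical fact that a dual adjunction restricts to an equivalence between the full subcategories of fixed objects, which the paper leaves implicit. There is no substantive difference.
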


\begin{proof}
    By \cref{c:fixed-spaces} and \cref{t:fixedalgebras}.
\end{proof}

As mentioned in the Introduction, Yosida duality is a consequence of \Cref{{t:MAIN}}. Let $\iota$ be the full embedding of the category $\KH$ into $\KHz$ which equips a compact Hausdorff space with the denominator function constantly equal to $0$.  On the other hand, let $U$ be the forgetful functor from the category $\overline{\mathsf{V}}$ of metrically complete vector lattices into $\cG$. 
Every lattice-group homomorphism from a vector lattice to an Archimedean vector lattice is linear \cite[Corollary 11.53]{MR1417259}; it follows that $U$ is full.
Therefore:
\begin{corollary} \label{c:Yosida}
    The duality in \cref{t:MAIN} between $\KHz$ and $\cG$ descends to Yosida duality, i.e.\ the following squares commute up to natural isomorphism.
    \[
    \begin{tikzcd}
        \KH^\op \arrow[shift left = 0.2em]{r}{\Cc} \arrow[hook, swap]{d}{\iota}& \overline{\mathsf{V}} \arrow[hook]{d}{U} \arrow[shift left = 0.2em]{l}{\Max}\\
        \KHzop \arrow[shift left = 0.2em]{r}{\C} & \cG \arrow[shift left = 0.2em]{l}{\Max}
    \end{tikzcd}
    \]
\end{corollary}

\appendix


\gdef\thesection{\Alph{section}} 
\makeatletter
\renewcommand\@seccntformat[1]{\appendixname\ \csname the#1\endcsname.\hspace{0.5em}}
\makeatother

\section{Proof of the dual adjunction} \label{s:appendix}

We show here how to obtain the adjunction of \Cref{p:specificadj} via the theory of general concrete dualities as summarised in \cite{PorstTholen} (see also \cite{DimovTholen}).
The set $\R$ plays the r\^{o}le of dualising object, but since as a space it is not compact  (and thus is not in $\Kz$), we shall need to consider wider categories. With a slight abuse of notation, we use the same symbols for functions defined previously in this article that are considered here in a more general context.

\label{d:Alg} One of the two categories partaking in the adjunction is $\Alg$. Objects of $\Alg$, known as \emph{$\tau$-algebras},  are sets equipped with the operations $\tau \coloneqq\{+,\wedge,\vee,-,0,1\}$ of respective arities $2$, $2$, $2$, $1$, $0$, $0$;    morphisms are the homomorphisms. Objects of $\Alg$ are called \emph{$\tau$-algebras}. The other category involved is $\Tz$, introduced above in Definition \ref{d:a-space}. 

On the one hand, $\R$ is a unital Abelian $\ell$-group---and hence a $\tau$-algebra---under addition and natural order, with unit $1$. On the other hand, $\R$ is an a-space when considered with its Euclidean topology, along with the denominator function $\den$ of \Cref{d:real-denominator}.

\begin{lemma}
[Initial lifts to $\Tz$]\label{l:init-lift-Tz}
	Let $X$ be a set, let $\left(Y_i,\zeta_{i}\right)_{i \in I}$ be a family of a-spaces, and, for each $i \in I$, let $f_i \colon X \to Y_i$ be a function. Endow $X$ with the topology initial with respect to $\left(f_i\right)_{i \in I}$, i.e.\ the topology generated by $f_i^{-1}[U]$,
	for $i \in I$ and $U$ an open subset of $Y_i$.   Set
	\begin{align*}
		\zeta \colon X & \longrightarrow \N\\
		x & \longmapsto \lcm{\left\{\zeta_i(f_i(x)) \mid i \in I \right\}}.
	\end{align*} 
	Then $X$ is an a-space and for every a-space $Z$ a function $g \colon Z\to X$  is an a-map if and only if, for every $i \in I$, $f_i \circ g$ is an a-map.	 
\end{lemma}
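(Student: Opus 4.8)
The plan is to prove the two assertions separately. That $(X,\zeta)$ is an a-space will be immediate: regarding $\N$ as the complete lattice under divisibility with bottom $1$ and top $0$, as in \Cref{d:real-denominator}, the least common multiple $\lcm\{\zeta_i(f_i(x)) \mid i \in I\}$ of an arbitrary family---finite, infinite, or empty---is a well-defined element of $\N$; hence $\zeta \colon X \to \N$ is a genuine function, and since an a-space is by definition nothing more than a topological space equipped with an arbitrary map to $\N$, the set $X$ with its initial topology and with $\zeta$ is an a-space.

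For the universal property I would decompose the notion of a-map into its two ingredients---continuity and decreasing denominators---and show that each transfers along the family $(f_i)_{i \in I}$. For continuity, this is the classical defining property of the initial topology: a map $g \colon Z \to X$ is continuous if and only if $f_i \circ g$ is continuous for every $i \in I$. For denominators, writing $\zeta_Z$ for the denominator function of $Z$ and fixing $z \in Z$, I would unwind the definition of $\zeta$ to get
\[
\zeta(g(z)) \;=\; \lcm\{\zeta_i(f_i(g(z))) \mid i \in I\},
\]
and then use that $\lcm$ is the join in the divisibility lattice: for any $n \in \N$, $\lcm\{\zeta_i(f_i(g(z))) \mid i \in I\}$ divides $n$ if and only if $\zeta_i(f_i(g(z)))$ divides $n$ for every $i \in I$. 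Specialising $n \df \zeta_Z(z)$ and quantifying over $z \in Z$ then gives that $g$ decreases denominators if and only if every $f_i \circ g$ does. Combining the continuity and denominator halves yields the stated equivalence; and the special case $Z = X$, $g = \mathrm{id}_X$ records, en passant, that each $f_i$ is itself an a-map.

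The only step that requires any care---and it is mild---is the lattice-theoretic bookkeeping around $\lcm$: one must use that $\N$ under divisibility is a complete lattice with $0$ on top, so that $\lcm$ of an arbitrary family is defined and satisfies the Galois-type equivalence ``$\lcm S$ divides $n$ if and only if $s$ divides $n$ for all $s \in S$'', which is precisely what powers the denominator half of the argument. Beyond that, the proof is a routine transcription of the standard theory of initial lifts, so I expect no real obstacle.
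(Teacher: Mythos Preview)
Your proposal is correct and follows essentially the same approach as the paper: both split the a-map condition into continuity (handled by the universal property of the initial topology) and denominator-decreasing (handled by the universal property of $\lcm$ as the join in the divisibility lattice on $\N$). The only cosmetic difference is that you package the denominator argument as a single biconditional via ``$\lcm S$ divides $n$ iff every $s \in S$ divides $n$'', whereas the paper treats the two implications separately, deducing the forward direction from the observation that each $f_i$ is an a-map by construction and that a-maps compose.
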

\begin{proof}
We first prove that $X$ is an a-space.
Let $n \in \N$ and let us prove that $\zeta^{-1}[\DIV n]$ is a closed subset of $X$.
We have
\begin{align*}
    \zeta^{-1}[\DIV n] & = \{x \in X \mid \zeta(x) \in \DIV n\}\\
    & = \left\{ x \in X \mid \lcm{\left\{\zeta_i(f_i(x)) \mid i \in I \right\}} \text{ divides } n \right\}\\
    & = \left\{ x \in X \mid \text{for all }i \in I,\, \zeta_i(f_i(x)) \text{ divides } n \right\}\\
    & = \bigcap_{i \in I} \left\{ x \in X \mid \zeta_i(f_i(x)) \text{ divides } n \right\}\\
    & = \bigcap_{i \in I} \left\{ x \in X \mid \zeta_i(f_i(x)) \in \DIV n \right\}\\
    & = \bigcap_{i \in I}\zeta_i^{-1}[\DIV n],
\end{align*}
which is closed because it is an intersection of closed sets. This proves that $X$ is an a-space.

Let $Z$ be an a-space, let $g \colon Z \to X$ be a function, and let us prove that $g$ is an a-map if and only if for every $i \in I$ $f_i \circ g$ is an a-map.

For the left-to-right implication it suffices to note that, for each $i \in I$, $f_i$ is an a-map by construction.
For the converse implication, $g$ is continuous because $f_i\circ g$ is continuous for each $i \in I$ and the topology on $X$ is initial with respect to the $f_i$'s. It remains to verify that $g$ decreases denominators.
For this, write $\zeta_0 \colon Z \to \N$ for the denominator function of $Z$. Given $z \in Z$, by definition
	\begin{align} \label{eq:l:init-lift-Tz1}
		\zeta(g(z)) = \lcm{\left\{\zeta_i(f_i(g(z))) \mid i \in I \right\}}.
	\end{align}
	Further, since each $f_i \circ g$ is an a-map,
	\begin{align} \label{eq:l:init-lift-Tz2}
		\zeta_i(f_i(g(z))) \text{ divides } \zeta_0(z).
	\end{align}
	Now \eqref{eq:l:init-lift-Tz2} states that $\zeta_0(z)$ is a multiple of each member of the set $\left\{\zeta_i(f_i(g(z))) \mid i \in I \right\}$, and \eqref{eq:l:init-lift-Tz1} says that $\zeta(g(z))$ is the least common multiple of this set. Hence, $\zeta(g(z)) \text{ divides } \zeta_0(z)$, as was to be shown.
\end{proof}

For any $\tau$-algebra $G$ and any $g \in G$, we define the function $\ev_g \colon \Max{G} \to \R$ by setting $\ev_g(x)\df x(g)$.
As a consequence of \Cref{l:init-lift-Tz}, for any $\tau$-algebra $G$, the set $\Max{G}$ admits an \emph{initial lift} to $\Tz$ along the family of evaluations 
\[
\left(\ev_g \colon\Max{G}\rightarrow \R\right)_{g \in G}
\]
in the sense of \cite{PorstTholen}.
In detail, the initial lift endows $\Max{G}$ with the structure induced by restriction from the inclusion $\Max{G} \seq \R^G$. Explicitly, the topology is generated by 
\[
\{x \in \Max{G}\mid x(g) \in U\},
\]
for $g \in G$ and $U$ an open subset of $\R$. The denominator function is given by
\[
\zeta(x) \df \lcm\{x(g) \mid g \in G\},
\]
for $x \in \Max{G}$.	

Given an a-space $X$ and an element $x \in X$, we define the function $\ev_x \colon \C(X) \to \R$ by setting $\ev_x(a) \df a(x)$.

\begin{lemma} \label{l:C(X)-is-subalgebra}For each a-space $X$, $\C(X)$ is a subalgebra of the product $\tau$-algebra $\R^X$.
\end{lemma}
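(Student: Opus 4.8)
The plan is to verify directly that $\C(X)$ is closed under each operation of $\tau = \{+, \wedge, \vee, -, 0, 1\}$, computed pointwise in $\R^X$. Since an a-map is a continuous map that decreases denominators, each closure requirement splits into a topological part and an arithmetic part, and I would handle these separately.

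The topological part is routine: $+, \wedge, \vee \colon \R^2 \to \R$ and $- \colon \R \to \R$ are continuous for the Euclidean topology, and $0, 1$ are points of $\R$; hence any pointwise combination of continuous functions $X \to \R$ is continuous, and the constant functions $X \to \R$ are trivially continuous. The arithmetic part is also immediate for four of the six operations. The constant functions with value $0$ and $1$ send every point of $X$ to a real number of denominator $1$, and $1$ divides every $\zeta(x)$. For $f, g \in \C(X)$ and $x \in X$ one has $\den\big((-f)(x)\big) = \den(f(x))$, while $(f \vee g)(x)$ and $(f \wedge g)(x)$ equal $f(x)$ or $g(x)$ because $\R$ is totally ordered; so in each of these three cases the denominator of the value at $x$ belongs to $\{\den(f(x)), \den(g(x))\}$, and both of these divide $\zeta(x)$ as $f$ and $g$ decrease denominators.

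The only operation carrying genuine content is $+$, and the key point is the elementary arithmetic fact that for $a, b \in \R$ and $n \in \N$, if $\den a$ and $\den b$ both divide $n$ then $\den(a+b)$ divides $n$: for $n = 0$ this is vacuous since $0$ is the top of $\N$ under divisibility, and for $n > 0$ the hypothesis reads $a, b \in \frac{1}{n}\Z$, whence $a + b \in \frac{1}{n}\Z$, i.e.\ $\den(a+b)$ divides $n$. Applying this with $a = f(x)$, $b = g(x)$, $n = \zeta(x)$ shows $\den\big((f+g)(x)\big)$ divides $\zeta(x)$ for every $x \in X$, so $f + g \in \C(X)$. Together with the observations above, this shows $\C(X)$ is closed under all operations of $\tau$, hence is a $\tau$-subalgebra of $\R^X$. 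I expect no real obstacle in the argument; the only point needing a moment's care is the bookkeeping around the value $n = 0$ --- equivalently, points where $f$ or $g$ takes an irrational value --- which the case distinction above settles. One could also package the whole argument by noting that the same computations make $\R$ an internal $\tau$-algebra in $\Tz$ (all structure maps being a-maps) and that $\C(X) = \mathrm{Hom}_{\Tz}(X, \R)$ inherits a $\tau$-algebra structure from it, but the direct verification is shorter.
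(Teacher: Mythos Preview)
Your argument is correct. It is essentially the same content as the paper's proof, but you unpack it pointwise whereas the paper packages it categorically: the paper first checks that each operation $+,\wedge,\vee,-,0,1$ is an a-map $\R^k\to\R$ (this is exactly your ``arithmetic part''), then invokes the initial-lift property of $\R^2$ in $\Tz$ (\Cref{l:init-lift-Tz}) so that $\langle f,g\rangle\colon X\to\R^2$ is an a-map and hence so is the composite $+\circ\langle f,g\rangle$. Your closing remark about $\R$ being an internal $\tau$-algebra in $\Tz$ is precisely the paper's viewpoint; the paper prefers this formulation because the surrounding appendix is set up in the Porst--Tholen concrete-duality framework, where the initial-lift language is what feeds into the next lemma, but your direct verification is equally valid and arguably more transparent for a reader not invested in that framework.
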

\begin{proof}[Sketch of proof]
	To check that $\C(X)$ is closed under the operations $+$, $\wedge$, $\vee$, $-$, $0$, and $1$, two steps are needed. (i) One verifies that the aforementioned operations in $\R$ are a-maps from the appropriate power of $\R$ to $\R$ itself. Here, we regard powers of $\R$ as a-spaces using the product topology and the denominator function \eqref{eq:den}. (ii) One uses \Cref{l:init-lift-Tz} to show that the latter structure of a-space on powers $\R^I$ of $\R$ is initial with respect to the projections $\pi_i \colon \R^I\to \R$. (Alternatively, one could prove that the needed finite powers of $\R$ with the product topology and denominator function \eqref{eq:den} indeed are the products in the category $\Tz$.) Now let $f,g \in \C(X)$, and define $f+g \colon X \to \R$ pointwise. In other words, $f+g$ is defined as the composition
	\[
	X \xrightarrow{ \ \langle f, g \rangle \ } \R^2 \xrightarrow{ + } \R\,.
	\]
	Since, by (ii), $\R^2$ is equipped with the initial structure with respect to the projections, and $f$ and $g$ are a-maps, the product map $\langle f, g \rangle$ is an a-map; since, by (i), $+$ is an a-map, it follows that the composition $f+g$ is an a-map, and thus lies in $\C(X)$. Similarly for the other operations.
\end{proof}

In light of \Cref{l:C(X)-is-subalgebra}, $\C(X)$ is naturally equipped with the structure of a $\tau$-algebra by regarding it as a subalgebra of the product $\R^X$. 

\begin{lemma}[Initial lifts to $\Alg$]\label{l:init-lift-Alg}For each a-space $X$, the structure of $\tau$-algebra on $\C(X)$ given by \Cref{l:C(X)-is-subalgebra} is initial with respect to the family of evaluations $\left(\ev_x \colon\C(X)\rightarrow \R\right)_{x \in X}$.
\end{lemma}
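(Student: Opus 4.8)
The plan is to unwind the notion of initial lift along the forgetful functor $\Alg \to \Set$ exactly as in the topological counterpart \Cref{l:init-lift-Tz}: it suffices to show that, for every $\tau$-algebra $Z$ and every function $g \colon Z \to \C(X)$ between the underlying sets, $g$ is a $\tau$-homomorphism if and only if $\ev_x \circ g \colon Z \to \R$ is a $\tau$-homomorphism for every $x \in X$. The left-to-right implication is immediate from \Cref{l:C(X)-is-subalgebra}: there $\C(X)$ is exhibited as a subalgebra of the product $\tau$-algebra $\R^X$, and $\ev_x$ is the restriction to $\C(X)$ of the product projection $\pi_x \colon \R^X \to \R$, which is a $\tau$-homomorphism; hence each $\ev_x \colon \C(X) \to \R$ is a $\tau$-homomorphism, and therefore so is $\ev_x \circ g$ whenever $g$ is.

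For the converse, I would use the two facts just recalled: each $\ev_x$ is a $\tau$-homomorphism, and the family $(\ev_x)_{x \in X}$ is jointly injective --- the latter being the tautology that two elements of $\C(X) \seq \R^X$ with the same value under every $\ev_x$ are equal. Assume that $\ev_x \circ g$ is a $\tau$-homomorphism for every $x \in X$, fix an $n$-ary operation symbol $\omega \in \tau$ and $z_1, \dots, z_n \in Z$. Then for every $x \in X$,
\[
\ev_x\bigl(g(\omega^Z(z_1,\dots,z_n))\bigr) = \omega^{\R}\bigl(\ev_x(g(z_1)),\dots,\ev_x(g(z_n))\bigr) = \ev_x\bigl(\omega^{\C(X)}(g(z_1),\dots,g(z_n))\bigr),
\]
where the first equality holds because $\ev_x \circ g$ is a homomorphism and the second because $\ev_x$ is. By joint injectivity, $g(\omega^Z(z_1,\dots,z_n)) = \omega^{\C(X)}(g(z_1),\dots,g(z_n))$; the nullary constants $0$ and $1$ are preserved by the same argument. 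Hence $g$ is a $\tau$-homomorphism, which is what initiality requires.

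I do not anticipate a real obstacle: this is the standard observation that, in a variety of algebras, a jointly injective family of homomorphisms out of an algebra realises that algebra as the initial lift of the underlying source, so the only ingredients are \Cref{l:C(X)-is-subalgebra} and the triviality that elements of a function set are determined by their values. The single point worth making precise is that ``initial'' is meant with respect to the forgetful functor $\Alg \to \Set$ in the sense of \cite{PorstTholen}, so that the statement reads, mutatis mutandis, exactly like \Cref{l:init-lift-Tz}; one may also note in passing that, the source being jointly injective, this initial lift is the unique $\tau$-algebra structure on $\C(X)$ making all the $\ev_x$ homomorphisms.
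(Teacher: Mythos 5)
Your proposal is correct and follows essentially the same route as the paper: the paper also reduces initiality to the biconditional ``$g$ is a homomorphism iff every $\ev_x\circ g$ is'', verifies the nontrivial direction by evaluating $g(\omega^Z(z_1,\dots,z_n))$ and $\omega^{\C(X)}(g(z_1),\dots,g(z_n))$ at each $x\in X$ (it writes this out for $+$ only, declaring the other operations analogous), and concludes by the fact that elements of $\C(X)\seq\R^X$ are determined by their values. Your explicit appeal to joint injectivity and to $\ev_x$ being the restricted product projection is just a slightly more systematic phrasing of the same argument.
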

\begin{proof}Given any $\tau$-algebra $A$ and any function $f \colon A \to \C(X)$, suppose that $\ev_x\circ f \colon A\to \R$ is a homomorphism for each $x \in X$. We prove that $f$ commutes with $+$. The argument for the remaining operations is analogous. Given $a,b \in A$ and $x \in X$, 
	\[
	\left(f(a+b) \right)(x) = \ev_x(f(a+b)) = \ev_x(f(a))+\ev_x(f(b)) = \left(f(a) \right)(x)+\left(f(b) \right)(x) \,.
	\]
	Since the above equalities hold for an arbitrary $x \in X$, we conclude $f(a+b) = f(a)+f(b)$.
	
	Conversely, given a homomorphism $f \colon A \to \C(X)$ we need to show that $\ev_x\circ f \colon A\to \R$ is a homomorphism for each $x \in X$---this is clear, because each $\ev_x \colon A\to \R$ is a homomorphism. 
\end{proof}

Combining the preceding considerations with \cite[Theorem 1.7]{PorstTholen}, the contravariant hom-functors 
\[\hom(-, \R) \colon \Tz \longrightarrow \Set\text{ and }\hom(-, \R) \colon \Alg \to \Set\] 
lift to contravariant functors
\[
\C\colon\Tz \longrightarrow \Alg
\text { and }
\Max\colon \Alg\longrightarrow \Tz
\]
and these are linked by the following result.
\begin{proposition} \label{p:basicadj} The contravariant functors $\C \colon \Tz\to \Alg$ and $\Max \colon \Alg \to \Tz$ are adjoint with units given by
\begin{align*}
    	\eta_X \colon X&\longrightarrow \Max\C(X) & \varepsilon_G\colon G & \longrightarrow \C(\Max{G})\\
	x&\longmapsto \ev_x \colon \C(X) \to \R, & g&\longmapsto \ev_g \colon \Max{G}\to \R.
\end{align*}
\end{proposition}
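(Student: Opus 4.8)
The plan is to read the statement off \cite[Theorem~1.7]{PorstTholen}, whose hypotheses the discussion preceding the proposition has already arranged. Concretely, both $\Alg$ and $\Tz$ sit in the framework of concrete dualities over $\Set$: for $\Tz$ this is because it admits the initial lifts supplied by \Cref{l:init-lift-Tz}, and for $\Alg$ because it is a variety of algebras, where initial lifts are obtained by transporting the operations pointwise. The object $\R$ is the dualising object, being simultaneously a $\tau$-algebra and an a-space; and --- this is the content of \Cref{l:C(X)-is-subalgebra} together with \Cref{l:init-lift-Alg}, and of the remarks following \Cref{l:init-lift-Tz} --- the structures carried by $\C(X)=\hom_{\Tz}(X,\R)$ and $\Max{G}=\hom_{\Alg}(G,\R)$ are exactly the initial ones with respect to the evaluation families $(\ev_x)_{x\in X}$ and $(\ev_g)_{g\in G}$. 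Granting this, \cite[Theorem~1.7]{PorstTholen} produces the contravariant adjunction $\C\dashv\Max$; the only thing left to verify is that its units are the evaluation maps displayed in the statement.

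For that last point I would make the transposition bijection explicit. A morphism $\phi\colon G\to\C(X)$ in $\Alg$ corresponds to the assignment $x\mapsto(g\mapsto\phi(g)(x))$. This assignment takes values in $\Max{G}$ because $\phi$ respects the operations of $\tau$, which in $\C(X)$ are computed pointwise; and it is an a-map by \Cref{l:init-lift-Tz}, since composing it with $\ev_g$ yields $\phi(g)\in\C(X)$, which is an a-map. Conversely, a morphism $\psi\colon X\to\Max{G}$ in $\Tz$ corresponds to $g\mapsto(x\mapsto\psi(x)(g))$; composing with $\ev_g$ gives an a-map, so each such function lies in $\C(X)$, and composing the resulting map $G\to\C(X)$ with $\ev_x$ gives the homomorphism $\psi(x)\in\Max{G}$, so by \Cref{l:init-lift-Alg} that map is a homomorphism. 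These assignments are mutually inverse and natural in $X$ and $G$ by direct inspection; setting $\phi=\mathrm{id}_{\C(X)}$ gives $\eta_X(x)=\ev_x$, and setting $\psi=\mathrm{id}_{\Max{G}}$ gives $\varepsilon_G(g)=\ev_g$.

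I do not anticipate any real difficulty: all the mathematical substance is already in \Cref{l:init-lift-Tz}, \Cref{l:C(X)-is-subalgebra}, and \Cref{l:init-lift-Alg}. The one point needing attention is bookkeeping --- namely, matching the abstract units of \cite[Theorem~1.7]{PorstTholen} with the concrete evaluation maps, which is precisely the short computation above; and if one prefers to avoid citing that theorem as a black box, it suffices to check directly that the two transposition assignments of the previous paragraph are mutually inverse, which is a routine unwinding of definitions.
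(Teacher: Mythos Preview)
Your proposal is correct and follows the same approach as the paper: both invoke \cite[Theorem~1.7]{PorstTholen} after observing that \Cref{l:init-lift-Tz}, \Cref{l:C(X)-is-subalgebra}, and \Cref{l:init-lift-Alg} supply the required initial lifts. The paper in fact leaves the proposition unproved beyond that citation, so your explicit computation of the transposition bijection and the identification of the units as evaluation maps is more detailed than what the paper provides.
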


\begin{theorem}\label{t:restrictedbasicadj}
The dual adjunction in \Cref{p:basicadj} descends to the full subcategories $\Gr$ of $\Alg$ on unital Abelian $\ell$-groups, and  $\Kz$ of $\Tz$ on a-spaces.
\end{theorem}
\begin{proof}
    Let $(X,\zeta)$ be an a-space. Then $\C(X)$ is an Abelian $\ell$-group because $\R$ is, and $\C(X)$ is a $\tau$-subalgebra of the power $\R^X$. Assume further that $X$ is compact. Given $f \in \C(X)$, by the Extreme Value Theorem there is $n \in \N$ such that $n 1 \geq f$, where $1\colon X \to \R$ denotes the function constantly equal to $1$. Hence, $1$ is a unit of $\C(X)$. 
	
	Conversely, let $G$ be a unital Abelian $\ell$-group. We first show that $\Max{G}$, which is a subset of $\R^G$, is contained in a product of compact intervals in $\R$. Indeed, for any $g \in G$ there is $n_g \in \N$ such that $-n_g\leq g \leq n_g$. Thus, for any $h \in \hom(G,\R)$ we have $-n_g\leq h(g) \leq n_g$. Therefore $\Max{G}\subseteq \prod_{g \in G}[-n_g, n_g]$. Since $\prod_{g \in G}[-n_g, n_g]$ is compact by Tychonoff's Theorem, $\Max{G}$ is compact as soon as it is closed. To establish the latter we write $\Max{G}$ as an intersection of closed sets. Indeed, 
	\begin{align}
		\left\{f \in \R^G\mid \forall x,y \in G \ f(x+y) = f(x)+f(y) \right\}& =
		\bigcap_{x,y \in G}\left\{f \in \R^G\mid f(x+y) = f(x)+f(y) \right\} \nonumber\\
		& = \bigcap_{x,y \in G}\left\{ f \in \R^G \mid \ev_{x+y}(f) = \ev_x(f) + \ev_y(f) \right\}.\label{eq:hausdorffequaliserclosed}
	\end{align}
	Now, the evaluation $\ev_{z}$ is continuous for each $z \in G$. Moreover, since $+\colon \R^2\to\R$ is continuous, $\ev_x+\ev_y$ is continuous. Since $\R$ is Hausdorff, the sets appearing in the intersection \eqref{eq:hausdorffequaliserclosed} are closed, and hence their intersection is closed. The remaining operations $\wedge$, $\vee$, $-$, $0$, and $1$ are continuous, too, so an analogous argument shows that $\Max{G}$ is an intersection of closed sets. The proof is complete.
\end{proof}

\section*{Acknowledgements}
We are grateful to Luca Carai for pointing out the equivalence between \ref{d:anormal-space:item3prime} and \ref{d:anormal-space:item3}, which led us to adopt \ref{d:anormal-space:item3prime} in the definition of a normal a-space.
Marco Abbadini and Luca Spada were supported by the Italian Ministry of University and Research through the PRIN project n.\ 20173WKCM5 \emph{Theory and applications of resource sensitive logics}. 
Marco Abbadini was also partially supported by UK Research and Innovation (UKRI) under the UK government’s Horizon Europe funding guarantee (grant number EP/Y015029/1, Project ``DCPOS'') and by the ``National Group for Algebraic and Geometric Structures, and their Applications'' (GNSAGA -- INdAM).

\end{document}